\tikzstyle arrowstyle=[scale=1]
\tikzstyle directed=[postaction={decorate,decoration={markings,mark=at position .65 with {\arrow[arrowstyle]{stealth}}}}]
\tikzstyle reverse directed=[postaction={decorate,decoration={markings,mark=at position .65 with {\arrowreversed[arrowstyle]{stealth};}}}]
\newtheorem{Th}{Theorem}[section]
\newtheorem{Lm}{Lemma}[section]
\newtheorem{theorem}{Theorem}[section]
\newtheorem{lemma}[theorem]{Lemma}
\newtheorem{Ass}{Assumption}[section]
\theoremstyle{definition}
\numberwithin{equation}{section}
\newcommand{\bc}{\begin{center}}
\newcommand{\ec}{\end{center}}
\newcommand{\be}{\begin{eqnarray}}
\newcommand{\ee}{\end{eqnarray}}
\newcommand{\ben}{\begin{eqnarray*}}
\newcommand{\een}{\end{eqnarray*}}
\newcommand{\Om}{{\rm\Omega}}
\newcommand{\dx}{\,dx}
\newcommand{\ds}{\,ds}
\newcommand{\Rmnum}[1]{\expandafter\@slowromancap\romannumeral #1@}
\newcommand{\PiRT}{\Pi_{\rm RT}}
\newcommand{\PiCR}{\Pi_{\rm CR}}
\newcommand{\PiECR}{\Pi_{\rm ECR}}
\newcommand{\PiM}{\Pi_{\rm M}}
\newcommand{\PiPt}{\Pi_{\rm P_3}}
\newcommand{\VCR}{V_{\rm CR}}
\newcommand{\VECR}{V_{\rm ECR}}
\newcommand{\CRlam}{\lambda_{\rm CR}}
\newcommand{\ECRlam}{\lambda_{\rm ECR}}
\newcommand{\CRu}{u_{\rm CR}}
\newcommand{\ECRu}{u_{\rm ECR}}
\newcommand{\ECRuS}{u_{\rm ECR}^{\lambda \Pi_h^0 u}}
\newcommand{\ECRuSt}{u_{\rm ECR}^{\lambda u}}
\newcommand{\CRuSt}{u_{\rm CR}^{\lambda u}}
\newcommand{\CRuS}{u_{\rm CR}^{\lambda u}}
\newcommand{\RTuS}{u_{\rm RT}^{\lambda u}}
\newcommand{\RTsigS}{\sigma_{\rm RT}^{\lambda u}}
\newcommand{\CRuPi}{u_{\rm CR}^{\lambda \Pi_h^0 u}}
\newcommand{\RTbeta}{\gamma_{\rm RT}}
\newcommand{\cE}{\mathcal{E}}
\newcommand{\cO}{\mathcal{O}}
\newcommand{\cT}{\mathcal{T}}
\newcommand{\R}{\mathbb{R}}
\newcommand{\bp}{\boldsymbol{p}}
\newcommand{\bM}{\boldsymbol{m}}
\renewcommand{\arraystretch}{1.5}
\newcommand{\yemeifont}{\fontsize{9pt}{\baselineskip}\selectfont}
\begin{document}
\title{
Asymptotic expansions of eigenvalues by both the Crouzeix--Raviart and enriched Crouzeix--Raviart elements
}
%\subtitle{Do you have a subtitle?\\ If so, write it here}

%\titlerunning{Short form of title}        % if too long for running head
\author {Jun Hu}
\address{LMAM and School of Mathematical Sciences, Peking University,
  Beijing 100871, P. R. China.  hujun@math.pku.edu.cn}

%\authorrunning{Short form of author list} % if too long for running head
\author{Limin Ma}
\address{LMAM and School of Mathematical Sciences, Peking University,
  Beijing 100871, P. R. China. maliminpku@gmail.com}

\thanks{The authors were supported by  NSFC
projects 11625101 and 11421101}

\maketitle

\begin{abstract}
Asymptotic expansions  are derived for eigenvalues produced by both the Crouzeix-Raviart element and the enriched Crouzeix--Raviart element. The expansions are optimal in the sense that extrapolation eigenvalues based on them admit a fourth order convergence provided that exact eigenfunctions are smooth enough. The major challenge in establishing the expansions comes from the fact that the canonical interpolation of both nonconforming elements  lacks a crucial superclose property, and  the nonconformity of both elements.
The main idea is to employ the relation between the lowest-order mixed Raviart--Thomas element and the two nonconforming elements, and consequently make use of the superclose property of the canonical  interpolation of the lowest-order mixed Raviart--Thomas element.
To overcome the difficulty caused by  the nonconformity, the commuting property of the canonical interpolation operators of both nonconforming elements is further used, which turns the consistency error problem into an interpolation error problem. Then, a series of new results are obtained to show the final expansions.

  \vskip 15pt

\noindent{\bf Keywords. }{eigenvalue problem, Crouzeix-Raviart element, enriched Crouzeix-Raviart element, asymptotic expansion,}

 \vskip 15pt

\noindent{\bf AMS subject classifications.}
    { 65N30.}

\end{abstract}

\section{Introduction}
Asymptotic expansions of approximate solutions guarantee the efficiency of extrapolation methods.
%Extrapolation methods are efficient techniques to achieve high accuracy and  based  on asymptotic expansions of approximate solutions.
The classical analysis of asymptotic expansions is usually carried out
by using the superclose property of the canonical interpolation of the element under consideration, see for instance
 \cite{Lin1984Asymptotic, Ding1990quadrature,Lin2011Extrapolation,Blum1990Finite,Lin2008New,Lin2009Asymptotic,lin1999high,Chen2007Asymptotic,
Jia2010Approximation,Luo2002High,lin2010new,Lin2007Finite,Lin2009New} and the references therein. For the Crouzeix-Raviart (CR for short hereinafter) element, the extrapolation methods in \cite{Lin2005CAN} were examined to improve the accuracy of discrete eigenvalues from second order to fourth order numerically.  But no asymptotic expansions were  analyzed there to justify the experimental results. One major difficulty comes from the fact  that the canonical interpolation of  the CR element does not admit such a superclose property.

In this paper, asymptotic expansions of eigenvalues on uniform triangulations are explored for both the CR element and the  enriched Crouzeix-Raviart (ECR for short hereinafter) element   for the first time. Errors of eigenvalues by nonconforming elements admit the following identity in \cite{hu2019posteig}
\begin{equation*}
\begin{split}
\lambda -\lambda_h =\|\nabla_h (u -  u_h)\|_{0, \Om}^2+2 a_h(u ,u_h )-2\lambda_h (u , u_h )-\lambda_h \|u - u_h\|_{0, \Om}^2
\end{split}
\end{equation*}
with approximate eigenpairs $(\lambda_h, u_h)$ defined in \eqref{discrete} below. Compared to conforming elements, there exist two major difficulties.  The canonical interpolation of the nonconforming elements does not admit a superclose property. This leads to the difficulty in expanding the first term $\|\nabla_h (u -  u_h)\|_{0, \Om}^2$ with high accuracy.
%The lack of superclose property for the canonical interpolation of the nonconforming elements leads to the difficulty in expanding the first term $\|\nabla_h (u -  u_h)\|_{0, \Om}^2$ with high accuracy.
The nonconformity causes the other difficulty in expanding the  consistency error term $a_h(u ,u_h ) - \lambda_h (u , u_h )$ for nonconforming elements.

 One major idea to overcome the first difficulty is to employ  the relation between the lowest-order mixed Raviart--Thomas element (RT for short hereinafter) and  both the CR element and  the ECR element, and  to exploit the superconvergence result \cite{hu2018optimal} for the  RT element. This superconvergence property of the mixed element remedies the lack of the superclose property of the canonical interpolation of both nonconforming elements. To overcome the second difficulty, the main idea is to make use of the commuting property of the canonical interpolation operator of both nonconforming elements. This commuting property turns the consistency error term into an interpolation error term. Take the CR element as an example,
 \iffalse
 it follows from the commuting property that
 \begin{equation}
\begin{split}
\lambda-\CRlam=&\parallel (I-\PiRT)\nabla u \parallel_{0,\Om}^2
+ \parallel \PiRT\nabla u -\RTsigS \parallel_{0,\Om}^2
+ \parallel \RTsigS- \nabla_h \CRuPi\parallel_{0,\Om}^2 \\
&+\parallel \nabla_h \CRuPi- \nabla_h \CRu\parallel_{0,\Om}^2
+2(\RTsigS- \nabla_h \CRuPi, \nabla_h \CRuPi-\nabla_h \CRu)\\
&+ 2((I -\PiRT)\nabla u , \RTsigS-\nabla_h \CRuPi)
+I_{\rm RT}+ I_{\rm CR}^1+I_{\rm CR}^2+I_{\rm CR}^3+\cO(h^4|u|_{2,\Om}^2)
\end{split}
\end{equation}
with the  terms $I_{\rm RT}$, $I_{\rm CR}^1$, $I_{\rm CR}^2$  and $I_{\rm CR}^3$ defined in  \eqref{crI}, and  $\RTsigS$ and $\CRuPi$ defined in \eqref{RTbdPro} and \eqref{CRbdPipro}, respectively. The Taylor expansion of the interpolation error of the RT element and the special relation between the CR element and the RT element \cite{Marini1985An} lead to the fourth-order accurate expansions of $\parallel (I-\PiRT)\nabla u \parallel_{0,\Om}^2 $ and $ \parallel \RTsigS- \nabla_h \CRuPi\parallel_{0,\Om}^2$, respectively.
This special relation also indicates that the term $(\RTsigS- \nabla_h \CRuPi, \nabla_h \CRuPi-\nabla_h \CRu)$ is equal to zero.
Thanks to the superconvergence of the mixed RT element and the superclose property of a source problem solution $\CRuPi$ with respect to $\CRu$, both $\parallel \PiRT\nabla u -\RTsigS \parallel_{0,\Om}^2$ and $\parallel \nabla_h \CRuPi- \nabla_h \CRu\parallel_{0,\Om}^2$ are fourth-order terms.

 \fi
it follows from the aforementioned superconvergence of the  RT element, the commuting property of the CR element and the special relation between the CR element and the RT element \cite{Marini1985An} that
\begin{equation}\label{eq:intro}
\lambda-\CRlam=\parallel (I-\PiRT)\nabla u \parallel_{0,\Om}^2   + \frac{\lambda^2  H^2}{144}+I_{\rm CR} +I_{\rm RT}+I_{\rm CR}^1+I_{\rm CR}^2+\cO(h^4|\ln h||u|_{{7\over 2},\Om}^2),
\end{equation}
with $H$ defined in  \eqref{HK}, the  terms $I_{\rm CR}$, $I_{\rm RT}$, $I_{\rm CR}^1$  and $I_{\rm CR}^2$ defined in  \eqref{crI}.
Optimal expansions of eigenvalues require fourth-order accurate expansions of $\parallel (I-\PiRT)\nabla u \parallel_{0,\Om}^2$ and $I_{\rm CR}$ and also an optimal analysis of the other terms in \eqref{eq:intro}.

There are three key terms for the expansions. The first one is $\parallel (I-\PiRT)\nabla u \parallel_{0,\Om}^2$, whose
optimal expansion  needs to introduce an operator with some commuting property, and a refined analysis of the associated interpolation error. The second one is  $I_{\rm CR}$ which contains two terms, one is  essentially a consistency error and  only admits a third order convergence which can not be improved. Hence, the direct use of the Cauchy-Schwarz inequality and the Taylor expansions of interpolation errors only leads to a suboptimal expansion. The idea is to  decompose the first term of $I_{\rm CR}$ into two terms: one cancels this consistency error, and the other term has an asymptotic expansion. One key result for the analysis is a crucial superconvergence of the   inner product of the errors of the  canonical interpolation  of the CR element and the piecewise constant $L^2$ projection.
The third term is $I_{\rm CR}^1$. For it, a direct combination of Cauchy-Schwarz inequality and the superclose property  of the CR element only yields  a suboptimal estimate.  The idea here is to  make use of  the relation  between the CR element and the RT element
 and decompose it into three terms: a vanishing term, a fourth order term and a remaining term. By using the commuting property of 
 the canonical interpolation operator of the CR element, the discrete eigenvalue problem  and  an auxiliary  discrete source problem, and fully  exploring the properties of the piecewise constant $L^2$ projection operator and the uniformity  of the mesh, this remaining term can be in some sense transferred to a  consistency error. A key result is the superconvergence of the inner 
  product of the errors of the piecewise constant $L^2$ projections of two CR element functions.

The remaining paper is organized as follows. Section 2 presents second order elliptic eigenvalue problems and some notations. Section 3 explores optimal asymptotic expansions of approximate eigenvalues of the CR element and analyzes the optimal convergence rate of eigenvalues by extrapolation methods. Section 4 deals with  eigenvalues of the ECR element in a similar way to that of the CR element in Section 3. Section 5 presents some numerical tests.

\section{Notations and Preliminaries}
\subsection{Notations}\label{sec:notation}
Given a nonnegative integer $k$ and a bounded domain $\Om\subset \mathbb{R}^2$ with boundary $\partial \Om$, let $W^{k,\infty}(\Om,\mathbb{R})$, $H^k(\Om,\mathbb{R})$, $\parallel \cdot \parallel_{k,\Om}$ and $|\cdot |_{k,\Om}$ denote the usual Sobolev spaces, norm, and semi-norm, respectively. And $H_0^1(\Om,\mathbb{R}) = \{u\in H^1(\Om,\mathbb{R}): u|_{\partial \Om}=0\}$. Denote the standard $L^2(\Om,\mathbb{R})$ inner product and $L^2(K,\mathbb{R})$ inner product by $(\cdot, \cdot)$ and $(\cdot, \cdot)_{0,K}$, respectively.

Suppose that $\Om\subset \mathbb{R}^2$ is a bounded polygonal domain covered exactly by a shape-regular partition $\cT_h$ into simplices. Let $|K|$ denote the area of element $K$ and $|e|$ the length of edge $e$. Let $h_K$ denote the diameter of element $K\in \cT_h$ and $h=\max_{K\in\cT_h}h_K$. Denote the set of all interior edges and boundary edges of $\cT_h$ by $\cE_h^i$ and $\cE_h^b$, respectively, and $\cE_h=\cE_h^i\cup \cE_h^b$. For any interior edge $e=K_e^1\cap K_e^2$, denote the element with larger global label by $K_e^1$ and the one with smaller global label by $K_e^2$. Denote the corresponding unit normal vector which points from $ K_e^1 $ to $K_e^2$ by $\bold{n}_e$. Let $[\cdot]$ be the jump of piecewise functions over edge $e$, namely
$$
[v]|_e := v|_{K_e^1}-v|_{K_e^2}$$
for any piecewise function $v$. For $K\subset\R^2,\ r\in \mathbb{Z}^+$, let $P_r(K, \R)$ be the space of all polynomials of degree not greater than $r$ on $K$. For $r\geq 1$, denote
$$
\nabla P_r(K, \R^2):=\{\nabla v: v\in P_r(K, \R)\}.
$$
Denote the piecewise gradient operator and the piecewise hessian operator by $\nabla_h$ and $\nabla_h^2$, respectively.

Let element $K$ have vertices $\bold{p}_i=(p_{i1},p_{i2}),1\leq i\leq 3$ oriented counterclockwise, and corresponding barycentric coordinates $\{\psi_i\}_{i=1}^3$. Let $\{e_i\}_{i=1}^3$  denote the edges of element $K$, $\{d_i\}_{i=1}^3$ the perpendicular heights,  $\{\theta_i\}_{i=1}^3$ the internal angles, $\{\bold{m}_i\}_{i=1}^3$ the  midpoint of edge $\{e_i\}_{i=1}^3$, and $\{\bold{n}_i\}_{i=1}^3$ the unit outward normal vectors, $\{\bold{t}_i\}_{i=1}^3$ the unit tangent vectors with counterclockwise orientation (see Figure \ref{fig:geometric}). There holds the following relationships
$
d_i|e_i|=2|K|
$
and
\be\label{nlambda}
\nabla \psi_i=-\frac{\bold{n}_i}{d_i}.
\ee
among the quantities \cite{huang2008superconvergence}.
Denote
\begin{equation}\label{HK}
H_K^2=\sum_{i=1}^3|e_i|^2 \quad (\text{ on uniform meshes, $H_K$ will be denoted by $H$})
\end{equation}
and the centroid of element $K$ by $\bold{M}_K=(M_1, M_2)$. Denote the second order derivatives $\frac{\partial^2 u}{\partial x_i\partial x_j}$ by $\partial_{x_ix_j} u$, $1\leq i, j\leq 2$.
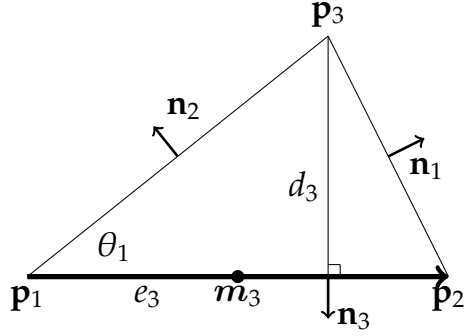
\begin{figure}[!ht]
\begin{center}
\begin{tikzpicture}[xscale=8,yscale=8]
\tikzstyle{every node}=[font=\Large,scale=0.9]
%\draw[-][line width=1pt](0,0) -- (0.4,0);
\draw[->][line width=2pt](0,0) -- (0.7,0);
\draw[-] (0,0) -- (0.5,0.4);
\draw[-] (0.7,0) -- (0.5,0.4);
\draw[-] (0.5,0) -- (0.5,0.4);
\draw[-] (0.5,0.02) -- (0.52,0.02);
\draw[-] (0.52,0.02) -- (0.52,0);
\draw[->][line width=1pt] (0.5,0) -- (0.5,-0.07);
%\draw[->][line width=1pt] (0.25,0.2) -- (0.2,0.16);
%\draw[->][line width=1pt] (0.6,0.2) -- (0.57,0.26);
\draw[->][line width=1pt] (0.6,0.2) -- (0.66,0.23);
\draw[->][line width=1pt] (0.25,0.2) -- (0.21,0.25);
\node[below] at (0,0) {$\bold{p}_1$};
\node[below] at (0.7,0) {$\bold{p}_2$};
\node[above] at (0.5,0.4) {$\bold{p}_3$};
\node[below] at (0.35,0) {$\bM_3$};
\draw [fill] (0.35,0) circle [radius=0.01];
\node[below] at (0.2,0) {$e_3$};%=l_3\bold{t}_3
%\node[left] at (0.2,0.18) {$\bold{t}_2$};
%\node[right] at (0.57,0.28) {$\bold{t}_1$};
\node[right] at (0.5,-0.07) {$\bold{n}_3$};
\node[right] at (0.62,0.18) {$\bold{n}_1$};
\node at (0.26,0.28) {$\bold{n}_2$};
\node[left] at (0.5,0.15) {$d_3$};
\node[right] at (0.1,0.05) {$\theta_1$};
%\node[left] at (0.63,0.05) {$\theta_2$};
%\node[below] at (0.45,0.35) {$\theta_3$};
\end{tikzpicture}
\end{center}
\caption{Paramters associated with a triangle $K$.}
\label{fig:geometric}
\end{figure}
For ease of presentation,  the symbol $A\lesssim B$ will be used to denote that $A\leq CB$, where $C $ is a positive constant.
\subsection{Nonconforming elements for eigenvalue problems}\label{sec:model}
Consider a model eigenvalue problem of finding : $(\lambda, u)\in \mathbb{R}\times V$  such that  $\parallel u \parallel_{0,\Om}=1$ and
\be\label{variance}
a(u, v)=\lambda(u, v) \text{\quad for any }v\in V,
\ee
with $V:= H^1_0(\Om,\mathbb{R})$. The bilinear form
$
a(w, v):=\int_{\Om} \nabla  w\cdot \nabla v \dx
$
is symmetric, bounded, and coercive, namely for any $w, v\in V$,
$$
a(w,v)=a(v,w),\quad |a(w,v)|\lesssim \parallel w\parallel_{1,\Om}\parallel v\parallel_{1,\Om},\quad \parallel v\parallel_{1,\Om}^2\lesssim a(v,v).
$$
The eigenvalue problem \eqref{variance} has a sequence of eigenvalues
$$0<\lambda^1\leq \lambda^2\leq \lambda^3\leq ...\nearrow +\infty,$$
and the corresponding eigenfunctions
$u^1, u^2, u^3,... ,$
with
$$(u^i, u^j)=\delta_{ij}\ \text{ with } \delta_{ij}=\begin{cases}
0 &\quad i\neq j\\
1 &\quad i=j
\end{cases}.
$$
%Since the domain $\Om$ is convex, $u_i\in H^2(\Om, \R)\cap H^1_0(\Om, \R)$.

Let $V_h$ be a nonconforming finite element approximation of $V$ over $\cT_h$. The corresponding finite element approximation of \eqref{variance} is to find $(\lambda_h, u_h)\in \mathbb{R}\times V_h$  such that $\parallel u_h\parallel_{0,\Om}=1$ and
\be\label{discrete}
a_h(u_h,v_h)=\lambda_h(u_h, v_h)\quad \text{ for any }v_h\in V_h,
\ee
with the discrete bilinear form
$
a_h(w_h,v_h):=\sum_{K\in\cT_h}\int_K \nabla_h w_h\cdot \nabla_h v_h\dx.
$

%Let $N=\text{dim }V_h$. Suppose that $ \parallel \cdot \parallel_h:=a_h(\cdot, \cdot)^{1/2}$ is a norm over the discrete space $V_h$, the discrete problem \eqref{discrete} admits a sequence of discrete eigenvalues
%$$0<\lambda_{1,h}\leq \lambda_{2,h}\leq \lambda_{3,h}\leq ...\nearrow \lambda_{N,h},$$
%and the corresponding eigenfunctions
%$$u_{1,h}, u_{2,h},..., u_{N,h},$$
%which satisfy $(u_{i,h}, u_{j,h})=\delta_{ij},\ 1\leq i,j\leq N.$

Consider the following two nonconforming elements: the CR element and the ECR element.

$\bullet$\quad  The CR element space over $\cT_h$ is defined in \cite{Crouzeix1973Conforming} by
\begin{equation*}
\begin{split}
\VCR:=&\big \{v\in L^2(\Om,\R)\big|v|_K\in P_1(K, \R)\text{ for any }  K\in\cT_h, \int_e [v]\ds =0\text{ for any }  e\in \cE_h^i,\\
&\int_e v\ds=0\text{ for any }  e\in \cE_h^b\big\}.
\end{split}
\end{equation*}
The corresponding canonical interpolation operator $\PiCR: V\rightarrow \VCR$ is defined as follows:
\be\label{crinterpolation}
\int_e\PiCR v\ds=\int_e v\ds\quad \text{ for any } e\in \cE_h,\ v\in V.
\ee
Denote the approximate eigenpair of \eqref{discrete} with $V_h=\VCR$ by $(\CRlam,\CRu)$ with $ \parallel \CRu\parallel_{0,\Om}=1$.

$\bullet$\quad  The ECR element space over $\cT_h$ is defined in \cite{Hu2014Lower} by
\begin{equation*}
\begin{split}
\VECR:=&\big \{v\in L^2(\Om,\mathbb{R})\big|v|_K\in \rm{ECR(K, \R)}\text{ for any }  K\in\cT_h, \int_e [v]\ds =0\text{ for any }  e\in \cE_h^i,\\
&\int_e v\ds=0\text{ for any }  e\in \cE_h^b\big\}.
\end{split}
\end{equation*}
with $\rm{ECR(K,\R)}: =P_1(K, \R)+\text{span}\big\{x_1^2+x_2^2\big\}$. The corresponding canonical interpolation operator $\PiECR: V\rightarrow \VECR $ is defined by
\begin{equation}\label{ecrinterpolation}
\int_e\PiECR v\ds= \mathlarger{\int}_e v\ds,\quad \mathlarger{\int}_K \PiECR v\dx= \int_K v\dx\quad\text{ for any }  e\in\cE_h, K\in\cT_h.
\end{equation}
Denote the approximate eigenpair of \eqref{discrete}  $V_h=\VECR$ by $(\ECRlam,\ECRu)$ and $ \parallel \ECRu\parallel_{0,\Om}=1$.

\begin{Ass}\label{ass:regularity}
The domain $\Om$ is convex or eigenfunction $u$ is smooth.
\end{Ass}
Assumption \ref{ass:regularity} guarantees that eigenfunctions belong to $H^2(\Om, \R)$. It follows from the theory of nonconforming eigenvalue approximations, see for instance, \cite{Hu2014Lower,rannacher1979nonconforming} and the references therein,  that
\begin{equation}\label{CR:est}
|\lambda-\CRlam|+\parallel u- \CRu\parallel_{0,\Om} +\parallel u- \PiCR u\parallel_{0,\Om}+ h\parallel \nabla_h (u-\CRu)\parallel_{0,\Om}\lesssim h^{2}\parallel u\parallel_{2,\Om},
\end{equation}
\begin{equation}\label{ECR:est}
|\lambda-\ECRlam|+\parallel u - \ECRu\parallel_{0,\Om} +\parallel u- \PiECR u\parallel_{0,\Om}+ h\parallel \nabla_h (u-\ECRu)\parallel_{0,\Om}\lesssim h^{2}\parallel u\parallel_{2,\Om}.
\end{equation}
For the CR element and the ECR element, there holds the following commuting property for their canonical interpolations
\begin{equation}\label{commuting}
\begin{split}
\int_K \nabla(w - \PiCR w)\cdot \nabla v_h \dx &=0\quad\text{ for any } w\in V, v_h\in \VCR,\\
\int_K \nabla(w - \PiECR w)\cdot \nabla v_h \dx &=0\quad\text{ for any }w\in V, v_h\in \VECR,
\end{split}
\end{equation}
see \cite{Crouzeix1973Conforming,Hu2014Lower} for more details.
%This commuting property is crucial for analyzing asymptotic expansions of eigenvalues by these two nonconforming elements.

For the CR element, there exists the following identity for the error of the approximate eigenvalues \cite{hu2019posteig}
\begin{equation}\label{originalId}
\begin{split}
\lambda -\CRlam =\|\nabla_h (u - \CRu)\|_{0, \Om}^2+2 a_h(u ,\CRu )-2\CRlam (u , \CRu )-\CRlam \|u - \CRu\|_{0, \Om}^2.
\end{split}
\end{equation}
It is difficult to establish an asymptotic expansion for the consistency error term $a_h(u ,\CRu )-\CRlam (u , \CRu )$ directly. The main idea herein is to employ the canonical interpolation operator $\PiCR$ of the CR element and the crucial commuting property \eqref{commuting}. In this way,  the consistency error term can be expressed in terms of the interpolation error, namely
$$
a_h(u ,\CRu )-\CRlam (u , \CRu )=-\CRlam (u-\PiCR u,\CRu ).
$$
As a result, the identity \eqref{originalId} becomes
\begin{equation}\label{commutId}
\lambda-\CRlam =\|\nabla_h (u - \CRu)\|_{0, \Om}^2-2\CRlam (u-\PiCR u,\CRu )-\CRlam \|u - \CRu\|_{0, \Om}^2.
\end{equation}
The asymptotic expansions of eigenvalues of the CR element in this paper are based on this crucial identity \eqref{commutId}.
Since the ECR element also admits a commuting property, a similar identity to \eqref{commutId} holds for approximate eigenpairs $(\ECRlam , \ECRu )$, and leads to the asymptotic expansions of eigenvalues of the ECR element.

\subsection{Raviart--Thomas element for source problems}

The shape function space of the lowest order RT element \cite{Raviart1977A}  is as follows
$$
\rm{RT(K, \R^2)}:=P_0(K, \R^2)+ \bold{x}P_0(K, \R)\quad\text{ for any }K\in \cT_h.
$$
The corresponding  finite element space reads
$$
\text{RT}(\cT_h):=\big \{\tau\in H(\text{div},\Om,\mathbb{R}^2): \tau|_K\in \rm{RT(K, \R^2)}\text{ for any }K\in \cT_h\big \}.
$$
To get a stable pair of space, the piecewise constant space is used to approximate the displacement, namely,
$$
U_{\text{RT}}:=\big \{v\in L^2(\Om, \R):v|_K\in P_0(K, \R) \text{ for any }K\in \cT_h\big \}.
$$
The Fortin interpolation operator $\PiRT:H(\rm{div}, \Om,\mathbb{R}^2)\rightarrow \text{RT}(\cT_h)$, which is widely used in error analysis, see for instance \cite{Douglas1985Global,Dur1990Superconvergence} and the references therein, is defined by
\begin{equation}\label{def:fortin}
\int_e (\PiRT \bold{\tau}-\bold{\tau})^T\bold{n}_e\ds=0\text{\quad for any }e\in \cE_h, \bold{\tau}\in H(\rm{div}, \Om,\R^2).
\end{equation}
For the analysis, introduce the following discrete source problem which seeks $(\RTsigS,\RTuS) \in \text{RT}(\cT_h)\times U_{\text{RT}}$ such that
\begin{equation}\label{RTbdPro}
\begin{aligned}
(\RTsigS,\tau_h)+(\RTuS,\text{div}\tau_h)&=0&& \text{ for any }\tau_h\in \text{RT}(\cT_h),\\
(\text{div}\RTsigS,v_h)&=-\lambda (u,v_h)&&\text{ for any }v_h\in U_{\text{RT}}.
\end{aligned}
\end{equation}
Note that  $\RTsigS$ is the RT element solution of $\sigma^{\rm \lambda u}=\nabla u$. It follows from the theory of mixed finite element methods \cite{Douglas1985Global} that
\be\label{RT:est}
\parallel u- \RTuS\parallel_{0,\Om} + \parallel \nabla u -\RTsigS\parallel_{0,\Om}+ \parallel \rm{div} (\sigma^{\rm \lambda u} -\RTsigS)\parallel_{0,\Om}\lesssim h^{s}\parallel u\parallel_{1+s,\Om},
\ee
provided that $u\in H^{1+s}(\Om,\mathbb{R})\cap H^1_0(\Om,\mathbb{R})$, $\ 0< s\leq 1$.

In this paper, the family of triangulations is assumed to be uniform:
\begin{Ass}\label{ass:mesh}
The triangulation $\cT_h$ is uniform. This is, any two adjacent triangles form a parallelogram.
\end{Ass}

In this paper, only uniform meshes in this sense will be used.  For this case, the mesh dependent constant $H_K$
from \eqref{HK} will be denoted by $H$.

According to \cite{Brandts1994Superconvergence,li2017global,hu2018optimal}, the RT element admits an important superconvergence property on uniform triangulations.
\begin{lemma}\label{Lm:bdRT}
Suppose that $(\RTsigS,\RTuS)$ is the solution of problem \eqref{RTbdPro} and $u\in H^{\frac{7}{2}}(\Om,\mathbb{R})\cap H^1_0(\Om, \R)$. Under the Assumption \ref{ass:mesh},
\begin{equation}\label{ECR1}
\parallel \RTsigS-\PiRT\nabla u\parallel_{0,\Om}\lesssim h^2 \big (| u|_{\frac{7}{2},\Om}+\kappa |\ln h|^{1/2}|u|_{2,\infty,\Om}\big ).
\end{equation}
\end{lemma}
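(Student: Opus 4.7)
The plan is to exploit the commuting diagram property of $\PiRT$ together with a discrete Helmholtz decomposition, and then control the resulting interpolation error by an element-wise Taylor expansion whose leading terms cancel between paired triangles on the uniform mesh. Set $\btau_h:=\RTsigS-\PiRT\nabla u\in \text{RT}(\cT_h)$.

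First I would show $\text{div}\,\btau_h=0$. The Fortin property \eqref{def:fortin} yields the commuting relation $\text{div}\,\PiRT\nabla u = P_0^h\Delta u=-\lambda P_0^h u$, where $P_0^h$ denotes the $L^2$-projection onto $U_{\text{RT}}$. Since the test space in the second equation of \eqref{RTbdPro} is piecewise constant, one also has $\text{div}\,\RTsigS=-\lambda P_0^h u$, and subtracting gives $\text{div}\,\btau_h=0$. Testing the first equation of \eqref{RTbdPro} with $\btau_h$ then yields $(\RTsigS,\btau_h)=0$, and integration by parts together with $u\in H^1_0(\Om,\R)$ and $\text{div}\,\btau_h=0$ gives $(\nabla u,\btau_h)=0$. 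Writing $\|\btau_h\|_{0,\Om}^2=(\RTsigS-\PiRT\nabla u,\btau_h)$ and inserting $\pm\nabla u$ produces the key identity
\begin{equation*}
\|\btau_h\|_{0,\Om}^2=(\nabla u-\PiRT\nabla u,\btau_h).
\end{equation*}

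Next I would appeal to a discrete Helmholtz decomposition: because $\btau_h$ is divergence-free and $\Om$ is simply connected, $\btau_h=\text{curl}\,w_h$ for a conforming piecewise linear stream function $w_h$ with $\|\nabla w_h\|_{0,\Om}\lesssim \|\btau_h\|_{0,\Om}$. Splitting $\Om=\Om_{\text{int}}\cup \Om_{\text{bd}}$, where $\Om_{\text{bd}}$ is the $O(h)$-wide strip adjacent to $\partial\Om$, Assumption \ref{ass:mesh} guarantees that every triangle in $\Om_{\text{int}}$ has a parallelogram partner. A second-order Taylor expansion of $\nabla u-\PiRT\nabla u$ about the common centroid of each pair shows that the leading $O(h)$ contributions change sign across the two members of the pair and cancel, leaving an $O(h^2|u|_{7/2,\Om})$ remainder after Cauchy--Schwarz and summation.

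The principal obstacle is the boundary contribution on $\Om_{\text{bd}}$, where the parallelogram pairing breaks down. There I would use the pointwise bound $\|\nabla u-\PiRT\nabla u\|_{\infty,K}\lesssim h|u|_{2,\infty,\Om}$ together with an $L^1$-type control of $\text{curl}\,w_h$ over the strip: a Cauchy--Schwarz over a strip of measure $O(h)$ combined with a weighted Sobolev / Green's function bound for $w_h$ near $\partial\Om$ generates precisely the logarithmic factor $|\ln h|^{1/2}$, yielding the $\kappa h^2|\ln h|^{1/2}|u|_{2,\infty,\Om}$ contribution. The interior cancellation is in the spirit of \cite{Brandts1994Superconvergence,li2017global} and the refined boundary treatment follows \cite{hu2018optimal}; assembling the two bounds and dividing by $\|\btau_h\|_{0,\Om}$ yields the claim.
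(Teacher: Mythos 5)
The paper offers no proof of Lemma \ref{Lm:bdRT} at all --- it is imported verbatim from \cite{Brandts1994Superconvergence,li2017global,hu2018optimal} --- so the comparison is between your sketch and the argument of those references. The first half of your proposal is correct and essentially complete: the Fortin commuting property gives $\mathrm{div}\,\PiRT\nabla u=-\lambda\Pi_h^0 u=\mathrm{div}\,\RTsigS$, hence $\mathrm{div}\,\btau_h=0$; the two orthogonality relations $(\RTsigS,\btau_h)=0$ and $(\nabla u,\btau_h)=0$ follow exactly as you say; and the reduction $\|\btau_h\|_{0,\Om}^2=(\nabla u-\PiRT\nabla u,\btau_h)$ is the standard and correct launching point. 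The second half is where the real work lives and where your sketch stays at the level of intent. Three things to flag. First, the decomposition $\btau_h=\mathrm{curl}\,w_h$ needs $\Om$ simply connected (or an added finite-dimensional harmonic component), which is not among the lemma's hypotheses. Second, the parallelogram cancellation as you describe it does not by itself account for the norm $|u|_{\frac{7}{2},\Om}$: after pairing, one is left with differences of second derivatives of $u$ across paired triangles and with jumps of $\nabla w_h$ across shared edges, and controlling these forces a passage to edge integrals where trace estimates consume the extra half derivative; this is precisely the technical core of \cite{hu2018optimal} and cannot be waved through with ``Cauchy--Schwarz and summation.'' Third, your boundary-strip treatment is actually simpler than needed: a direct H\"older bound $\|\nabla u-\PiRT\nabla u\|_{L^\infty(\Om_{\mathrm{bd}})}\,|\Om_{\mathrm{bd}}|^{1/2}\,\|\btau_h\|_{0,\Om}\lesssim \sqrt{\kappa}\,h^2|u|_{2,\infty,\Om}\|\btau_h\|_{0,\Om}$ already beats the stated $\kappa|\ln h|^{1/2}$ term with no Green's-function analysis, so the logarithm is an artifact of the edge-based mechanism in the cited works rather than of the strip estimate you invoke. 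Since the paper itself delegates the entire lemma to the literature, your level of rigor is comparable to the paper's; but as a standalone proof the cancellation step would need the full machinery of \cite{Brandts1994Superconvergence,li2017global,hu2018optimal} spelled out.
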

\subsection{Relation between the RT element and nonconforming elements}
Define the $L^2$ projection operators $\Pi_K^0: L^2(K, \R)\rightarrow P_0(K, \R)$ and $\Pi_h^0: L^2(\Om, \R)\rightarrow U_{\rm RT}$ by
\be\label{Pi0}
\Pi_K^0 w =\frac{1}{|K|}\int_K w\dx\quad \text{ and }\quad \Pi_h^0 w\big |_K =\Pi_K^0 u,
\ee
respectively. It holds that for any $u\in H^1(K, \R)$,
\be\label{Pi0est}
\|u- \Pi_K^0 u\|_{0, K}\lesssim h|u|_{1, K}.
\ee

Consider  two discrete source problems: one seeks $\CRuPi\in \VCR$ such that
\be\label{CRbdPipro}
(\nabla_h \CRuPi,\nabla_h v_h)=\lambda (\Pi_h^0 u ,v_h)\quad \text{ for any } v_h\in \VCR,
\ee
and the other one seeks $\ECRuS \in \VECR$ such that
\be\label{ECRbdpro}
(\nabla_h \ECRuS ,\nabla_h v_h)=(\lambda \Pi_h^0 u ,v_h)\quad\text{ for any } v_h\in \VECR.
\ee
By the definition of the $L^2$ projection operator $\Pi_h^0$ and the discrete space $U_{\text{RT}}$,  it holds that
$$
(\lambda u, v_h)= (\lambda \Pi_h^0 u, v_h), \text{ for any } v_h\in U_{\text{RT}} .
$$
This implies that the discrete source problem \eqref{RTbdPro}  can be  rewritten as: finding $(\RTsigS,\RTuS) \in \text{RT}(\cT_h)\times U_{\text{RT}}$ such that
\begin{equation*}
\begin{aligned}
(\RTsigS,\tau_h)+(\RTuS,\text{div}\tau_h)&=0&& \text{ for any }\tau_h\in \text{RT}(\cT_h),\\
(\text{div}\RTsigS,v_h)&=-\lambda (\Pi_h^0 u, v_h)&&\text{ for any }v_h\in U_{\text{RT}}.
\end{aligned}
\end{equation*}

The following Lemma states the special relations between the RT solution of Problem \eqref{RTbdPro} and  the CR solution of Problem \eqref{CRbdPipro},  and the ECR solution of Problem \eqref{ECRbdpro}, respectively,  see more details in  \cite{Marini1985An} and  \cite{Hu2015The}.
\begin{lemma}\label{lm:equiv}
Let $(\RTsigS, \RTuS)$, $\CRuPi$ and $\ECRuS$ be the solutions of \eqref{RTbdPro}, \eqref{CRbdPipro} and \eqref{ECRbdpro}, respectively. It holds that
\begin{equation}\label{CRRT}
\RTsigS\big|_K=\nabla_h \CRuPi\big|_K-\frac{\lambda  \Pi_K^0 u }{2}(\bold{x}-\bold{M}_K),\quad \bold{x}\in K,
\end{equation}
\begin{equation}\label{ECRRT}
\RTsigS= \nabla_h \ECRuS,
\end{equation}
for any $K\in\cT_h$ with $\bold{M}_K$ the centroid of $K$.
\end{lemma}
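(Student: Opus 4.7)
The plan is to show, for each identity, that the claimed right-hand side lies in $\text{RT}(\cT_h)$ and satisfies the same mixed saddle-point system \eqref{RTbdPro} as $\RTsigS$; uniqueness of the RT mixed solution then completes the proof.

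The ECR identity \eqref{ECRRT} is the easier of the two. Since $\nabla(x_1^2+x_2^2)=2\bold{x}$, every $w\in\rm{ECR}(K,\R)$ has $\nabla w|_K\in P_0(K,\R^2)+\bold{x}P_0(K,\R)=\rm{RT}(K,\R^2)$, so membership in the local RT shape space is automatic. For normal continuity across an interior edge $e=K_1\cap K_2$ I would test \eqref{ECRbdpro} with the ECR edge basis function $\phi_e$, whose DoFs satisfy $\int_{e'}\phi_e\ds=|e|\delta_{ee'}$ and $\int_K\phi_e\dx=0$; integrating by parts on each element, using that $\Delta \ECRuS|_K$ is constant and that $\nabla \ECRuS\cdot\bold{n}_{e'}$ is constant on each edge, reduces the equation to $|e|[\nabla \ECRuS\cdot\bold{n}_e]=0$. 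Testing \eqref{ECRbdpro} instead with the ECR interior bubble supported on a single $K$ (with $\int_K\cdot\dx=1$ and vanishing edge moments) yields $\Delta \ECRuS|_K=-\lambda\Pi_K^0 u$, the correct divergence. For the first mixed equation, elementwise integration by parts gives
$$
(\nabla_h\ECRuS,\tau_h)+(\Pi_h^0\ECRuS,\text{div}\,\tau_h)=\sum_{e\in\cE_h^i}(\tau_h\cdot\bold{n}_e)\int_e[\ECRuS]\ds+\sum_{e\in\cE_h^b}(\tau_h\cdot\bold{n}_e)\int_e\ECRuS\ds,
$$
for any $\tau_h\in\text{RT}(\cT_h)$; the right-hand side vanishes by the continuity constraints defining $\VECR$, so the first mixed equation holds with $w_h=\Pi_h^0\ECRuS$.

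For the CR identity \eqref{CRRT} I would set $\tau_h|_K:=\nabla_h\CRuPi|_K-\tfrac{\lambda\Pi_K^0 u}{2}(\bold{x}-\bold{M}_K)$; local membership $\tau_h|_K\in\rm{RT}(K,\R^2)$ and $\text{div}\,\tau_h|_K=-\lambda\Pi_K^0 u$ follow immediately from $\CRuPi\in P_1$ and $\text{div}(\bold{x}-\bold{M}_K)=2$. Normal continuity across $e=K_1\cap K_2$ is verified by testing \eqref{CRbdPipro} with the CR edge basis function $\phi_e$: integration by parts produces $|e|[\nabla_h\CRuPi\cdot\bold{n}_e]$, while $\lambda(\Pi_h^0 u,\phi_e)$ evaluates via $\int_K\phi_e\dx=|K|/3=|e|d_{K,e}/6$ to $\tfrac{\lambda|e|}{6}\bigl(d_{K_1,e}\Pi_{K_1}^0 u+d_{K_2,e}\Pi_{K_2}^0 u\bigr)$; together with the centroid identity $(\bold{m}_e-\bold{M}_K)\cdot\bold{n}_e=\pm d_{K,e}/3$ (sign depending on orientation of $\bold{n}_e$), this exactly cancels the jump of the $(\bold{x}-\bold{M}_K)$ part of $\tau_h$. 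For the first mixed equation, the key algebraic identity
$$
\int_K(\bold{x}-\bold{M}_K)\cdot\sigma_h\dx=\tfrac{1}{2}(\text{div}\,\sigma_h)_K\int_K|\bold{x}-\bold{M}_K|^2\dx
$$
for $\sigma_h\in\rm{RT}(K,\R^2)$ (using $\int_K(\bold{x}-\bold{M}_K)\dx=0$) suggests the choice $w_h|_K:=\Pi_K^0\CRuPi+\tfrac{\lambda\Pi_K^0 u}{4|K|}\int_K|\bold{x}-\bold{M}_K|^2\dx$, with which $(\tau_h,\sigma_h)+(w_h,\text{div}\,\sigma_h)$ reduces to $\sum_{e\in\cE_h^i}(\sigma_h\cdot\bold{n}_e)\int_e[\CRuPi]\ds+\sum_{e\in\cE_h^b}(\sigma_h\cdot\bold{n}_e)\int_e\CRuPi\ds=0$ by the CR continuity constraints.

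The main obstacle I expect is in the CR case: identifying the correct piecewise constant $w_h$ so that the first mixed equation is satisfied. The quadratic moment $\int_K|\bold{x}-\bold{M}_K|^2\dx$ in the correction is not an obvious guess; it emerges only once one computes $((\bold{x}-\bold{M}_K),\sigma_h)_K$ explicitly. The ECR case avoids this entirely because $\rm{ECR}(K,\R)$ is already large enough that $\nabla \ECRuS|_K$ covers the full local RT shape space without any affine shift, making $w_h=\Pi_h^0\ECRuS$ work directly.
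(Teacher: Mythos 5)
Your proof is correct. The paper itself gives no proof of this lemma---it defers to \cite{Marini1985An} for \eqref{CRRT} and to \cite{Hu2015The} for \eqref{ECRRT}---and your argument (exhibit the candidate flux, verify local membership in $\rm{RT}(K,\R^2)$, normal continuity, the divergence equation, and the first mixed equation with an explicitly constructed piecewise-constant $w_h$, then invoke uniqueness of the mixed solution) is precisely the classical strategy of those references; the computations, including the jump cancellation via $(\bold{m}_e-\bold{M}_K)\cdot\bold{n}_e=\pm d_{K,e}/3$ and the quadratic-moment correction in $w_h$ for the CR case, all check out.
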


\section{Asymptotic expansions of eigenvalues of the CR element}\label{sec:CR}
%{\color{red}  put all the details in the appendix, and just list the expansions which will be used in asymptotic expansions here}

In this section, asymptotic expansions of eigenvalues are established for the CR element, and then employed to prove an optimal convergence of eigenvalues by extrapolation methods.

\subsection{Error expansions for eigenvalues}
Let $\CRuS \in V_{\rm CR}$ be the solution of the following source problem
\be\label{CRbdpro}
(\nabla_h \CRuS , \nabla_h v_h)=\lambda (u,v_h)\quad \text{ for any } v_h\in \VCR,
\ee
where $(\lambda, u)$ is the solution of the eigenvalue problem \eqref{variance}. The following lemma presents a superclose property of the discrete eigenfunction $\CRu$ with respect to the CR solution $\CRuPi$ of the discrete source problem  \eqref{CRbdPipro}.
\begin{lemma}\label{supereig1}
Suppose that $(\lambda, u)$ is the eigenpair  of \eqref{variance}, $(\CRlam, \CRu)$ is the corresponding  eigenpair of  \eqref{discrete} by the CR element and $\CRuPi$ is the  solution of  \eqref{CRbdPipro}. Under the Assumption \ref{ass:regularity},
\begin{equation}\label{CRESdiff}
\parallel\nabla_h \CRu-\nabla_h \CRuPi \parallel_{0,\Om}\lesssim h^2|u|_{2,\Om}.
\end{equation}
\end{lemma}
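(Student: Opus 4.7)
The plan is a standard energy estimate applied to the difference $w := \CRu - \CRuPi \in \VCR$. Subtracting the defining relation \eqref{CRbdPipro} from the discrete eigenvalue equation $a_h(\CRu, v_h) = \CRlam(\CRu, v_h)$ yields
\begin{equation*}
a_h(w, v_h) = \CRlam(\CRu, v_h) - \lambda(\Pi_h^0 u, v_h) \qquad \text{for all } v_h \in \VCR.
\end{equation*}
Choosing $v_h = w$ turns the left-hand side into $\|\nabla_h w\|_{0,\Om}^2$, so the task reduces to showing that the right-hand side is bounded by $h^2|u|_{2,\Om}\|\nabla_h w\|_{0,\Om}$ up to harmless constants depending on the fixed eigenpair data.

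To do so I would split the right-hand side by adding and subtracting $\lambda(u,w)$:
\begin{equation*}
\CRlam(\CRu, w) - \lambda(\Pi_h^0 u, w) = \CRlam(\CRu - u, w) + (\CRlam - \lambda)(u, w) + \lambda(u - \Pi_h^0 u, w).
\end{equation*}
The first two terms are controlled by the a priori estimate \eqref{CR:est}, which supplies both $\|\CRu - u\|_{0,\Om} \lesssim h^2\|u\|_{2,\Om}$ and $|\CRlam - \lambda| \lesssim h^2\|u\|_{2,\Om}$. Combining these with the discrete Poincar\'e inequality $\|w\|_{0,\Om} \lesssim \|\nabla_h w\|_{0,\Om}$ for $\VCR$ functions, each of these two contributions is at most a constant multiple of $h^2\|u\|_{2,\Om}\|\nabla_h w\|_{0,\Om}$, as required.

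The main obstacle is the third term $\lambda(u - \Pi_h^0 u, w)$, since a direct application of Cauchy--Schwarz and \eqref{Pi0est} would yield only $h|u|_{1,\Om}\|w\|_{0,\Om}$, one power of $h$ short. The decisive observation is that $\Pi_h^0 w \in U_{\rm RT}$, so by the orthogonality that defines $\Pi_h^0$ one has
\begin{equation*}
(u - \Pi_h^0 u, w) = (u - \Pi_h^0 u, w - \Pi_h^0 w).
\end{equation*}
Applying \eqref{Pi0est} to both factors elementwise gives $\|u - \Pi_K^0 u\|_{0,K}\|w - \Pi_K^0 w\|_{0,K} \lesssim h_K^2 |u|_{1,K}|w|_{1,K}$, and summing over $K$ with the Cauchy--Schwarz inequality yields $|(u - \Pi_h^0 u, w)| \lesssim h^2 |u|_{1,\Om}\|\nabla_h w\|_{0,\Om}$. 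Putting the three estimates together and dividing by $\|\nabla_h w\|_{0,\Om}$ delivers the claimed bound; the rest is routine.
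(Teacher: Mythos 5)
Your proof is correct and uses essentially the same ingredients as the paper: an energy (Galerkin-orthogonality) argument, the a priori estimates \eqref{CR:est}, and the decisive double-orthogonality trick $(u-\Pi_h^0 u, w)=(u-\Pi_h^0 u, w-\Pi_h^0 w)$ to gain the extra power of $h$. The only difference is organizational: the paper splits $\CRu-\CRuPi$ via the auxiliary source-problem solution $\CRuSt$ and treats the two pieces separately, whereas you handle the full difference in a single energy estimate (using the discrete Poincar\'e inequality where the paper instead uses $\|\CRu-\CRuSt\|_{0,\Om}\lesssim h^2|u|_{2,\Om}$); both are valid.
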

\begin{proof}
It follows from the triangle inequality that
\begin{equation}\label{CRESdiffT}
\parallel\nabla_h \CRu-\nabla_h \CRuPi \parallel_{0,\Om}\leq \parallel\nabla_h \CRu-\nabla_h \CRuSt \parallel_{0,\Om} + \parallel\nabla_h \CRuSt-\nabla_h \CRuPi \parallel_{0,\Om}.
\end{equation}
To bound the first term on the right--hand side of \eqref{CRESdiffT}, let $v_h=\CRu-\CRuSt$ in Problem \eqref{discrete} and Problem \eqref{CRbdpro}.  This  yields
\begin{equation}\label{CRESdiff0}
\parallel\nabla_h \CRu-\nabla_h \CRuSt \parallel_{0,\Om}^2=(\CRlam \CRu - \lambda u, \CRu-\CRuSt ).
\end{equation}
By the error estimate \eqref{CR:est} for the discrete eigenpair $( \CRlam, \CRu)$,
\begin{equation}\label{CRESdiff1}
\parallel \CRlam \CRu - \lambda u\parallel_{0,\Om}\leq |\CRlam|\parallel \CRu - u\parallel_{0,\Om}+ | \CRlam -  \lambda |\lesssim h^2|u|_{2,\Om}.
\end{equation}
It follows from the triangle inequality, the error estimate  \eqref{CR:est}, and the fact that $\CRuSt$ is the CR element approximation
 of the solution $u$  of the source problem $-\Delta w=\lambda u$ with $(\lambda, u)$ the eigenpair of Problem \eqref{variance},   that
\begin{equation}\label{CRESdiff2}
\parallel \CRu-\CRuSt \parallel_{0,\Om}\leq \parallel \CRu-u\parallel_{0,\Om}+ \parallel u-\CRuSt \parallel_{0,\Om}\lesssim h^2|u|_{2,\Om}.
\end{equation}
A substitution of \eqref{CRESdiff1} and \eqref{CRESdiff2} to \eqref{CRESdiff0} leads to
\begin{equation}\label{CRESdiff3}
\parallel\nabla_h \CRu-\nabla_h \CRuSt \parallel_{0,\Om}\lesssim h^2|u|_{2,\Om}.
\end{equation}
To bound the second term on the right--hand side of \eqref{CRESdiffT}, let $v_h=\CRu-\CRuSt$ in Problem  \eqref{CRbdPipro} and Problem \eqref{CRbdpro}.  This  yields
\begin{equation*}
\parallel\nabla_h \CRuSt-\nabla_h \CRuPi \parallel_{0,\Om}^2=\lambda ( u- \Pi_h^0 u, \CRuSt - \CRuPi )=\lambda ( u- \Pi_h^0 u, (I-\Pi_h^0)(\CRuSt - \CRuPi) ).
\end{equation*}
This, together with the error estimate  \eqref{Pi0est} of the piecewise constant $L^2$-projection $\Pi_h^0 u$ and a Poincare inequality
 for the term $(I-\Pi_h^0)(\CRuSt - \CRuPi)$, leads to
$$
\parallel\nabla_h \CRuSt-\nabla_h \CRuPi \parallel_{0,\Om}^2\lesssim \lambda h^2|u|_{1, \Om}\parallel\nabla_h \CRuSt-\nabla_h \CRuPi \parallel_{0,\Om},
$$
consequently,
\begin{equation}\label{CRESdiff5}
\parallel\nabla_h \CRuSt-\nabla_h \CRuPi\parallel_{0,\Om}\lesssim \lambda h^2|u|_{1, \Om}.
\end{equation}
A substitution of \eqref{CRESdiff3} and \eqref{CRESdiff5} to \eqref{CRESdiffT} leads to
$$
\parallel\nabla_h \CRu-\nabla_h \CRuPi \parallel_{0,\Om}\lesssim h^2|u|_{2,\Om},
$$
and completes the proof.
\end{proof}

For the CR element, the canonical interpolation does not admit the usual superclose property with respect to the finite element solution in the energy norm. The lack of this crucial superclose property makes it difficult to establish asymptotic expansions of eigenvalues by directly using the canonical interpolation of the nonconforming CR element.

To overcome such a difficulty, the key idea is to make use of  the relation between the RT element and the CR element in Lemma \ref{lm:equiv} and the superconvergence property of the mixed RT element. To this end, introduce the following notations
with
\begin{equation}\label{crI}
\begin{array}{ll}
I_{\rm CR}&=2((I-\PiRT)\nabla u   , \RTsigS-\nabla_h \CRuPi)-2\lambda(u-\PiCR u,  u),\\
I_{\rm RT}&=2(\nabla u-\PiRT\nabla u, \PiRT\nabla u-\RTsigS), \\
 I_{\rm CR}^1&=2(\nabla u -\PiRT \nabla u , \nabla_h \CRuPi-\nabla_h \CRu),\\
I_{\rm CR}^2&=2(\PiRT\nabla u -\RTsigS,\RTsigS- \nabla_h \CRu).
\end{array}
\end{equation}
%use the canonical interpolation operator $\PiRT$ of the mixed RT element, together with the interpolation $\PiCR$ of the nonconforming CR element. The lack of superclose property of the CR element causes  the difficulty in the asymptotic expansion of eigenvalues. The key idea is to employ the interpolation of the mixed RT element and take use of the superconvergence result of this mixed element.
The asymptotic  expansions of eigenvalues of the CR element are based on the identity in the following theorem.
\begin{Th}\label{Th:extraCR}
Suppose that $(\lambda , u )$ is the eigenpair of \eqref{variance} with $u \in H^{\frac{7}{2}}(\Om,\mathbb{R})\cap H^1_0(\Om,\mathbb{R})$, and $(\CRlam, \CRu)$ is the discrete eigenpair  of \eqref{discrete} by the CR element. Under  Assumption \ref{ass:mesh},
\begin{equation}\label{CRfinal1}
\begin{split}
\lambda-\CRlam&=\parallel (I-\PiRT)\nabla u \parallel_{0,\Om}^2   + \frac{\lambda^2  H^2}{144}+I_{\rm CR} +I_{\rm RT}+I_{\rm CR}^1+I_{\rm CR}^2+\cO(h^4|\ln h||u|_{{7\over 2},\Om}^2),
\end{split}
\end{equation}
with  $H$ defined in \eqref{HK}, and $I_{\rm CR}$ , $I_{\rm RT}$, $I_{\rm CR}^1$ and $I_{\rm CR}^2$  defined in \eqref{crI}.
\end{Th}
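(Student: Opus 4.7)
The plan is to start from the identity \eqref{commutId} and expand $\|\nabla_h(u-\CRu)\|_{0,\Om}^2$ along the four-link chain
\begin{equation*}
\nabla u - \nabla_h\CRu \;=\; \underbrace{(I-\PiRT)\nabla u}_{A}\;+\;\underbrace{\PiRT\nabla u-\RTsigS}_{B}\;+\;\underbrace{\RTsigS-\nabla_h\CRuPi}_{C}\;+\;\underbrace{\nabla_h\CRuPi-\nabla_h\CRu}_{D},
\end{equation*}
so the decomposition produces four squared norms and six cross products. Before that, I handle the two subordinate contributions in \eqref{commutId} using \eqref{CR:est}: since $|\lambda-\CRlam|+\|u-\CRu\|_{0,\Om}\lesssim h^2|u|_{2,\Om}$, the term $\CRlam\|u-\CRu\|_{0,\Om}^2$ is already $\cO(h^4|u|_{2,\Om}^2)$, and writing $\CRlam=\lambda+(\CRlam-\lambda)$ and $\CRu=u+(\CRu-u)$ gives
\begin{equation*}
-2\CRlam(u-\PiCR u,\CRu) \;=\; -2\lambda(u-\PiCR u,u)\;+\;\cO(h^4|u|_{2,\Om}^2).
\end{equation*}
Thus \eqref{commutId} reduces to $\lambda-\CRlam = \|A+B+C+D\|_{0,\Om}^2 - 2\lambda(u-\PiCR u,u)$ modulo an admissible remainder.

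Expanding the squared norm and matching against \eqref{crI}: by construction, $2(A,B)=I_{\rm RT}$, $2(A,D)=I_{\rm CR}^1$, and $2(B,C)+2(B,D)=2(B,\RTsigS-\nabla_h\CRu)=I_{\rm CR}^2$. The cross product $2(A,C)$ fuses with the leftover $-2\lambda(u-\PiCR u,u)$ to produce exactly $I_{\rm CR}$. The remaining cross product $(C,D)$ vanishes identically: \eqref{CRRT} gives $C|_K=-\tfrac{\lambda\,\Pi_K^0 u}{2}(\bx-\bold{M}_K)$, which has zero mean on every $K$, while $D$ is piecewise constant, so the elementwise inner products are zero. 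For the diagonal terms, Lemma \ref{Lm:bdRT} yields $\|B\|_{0,\Om}^2\lesssim h^4|\ln h|\bigl(|u|_{7/2,\Om}^2+|u|_{2,\infty,\Om}^2\bigr)$, Lemma \ref{supereig1} gives $\|D\|_{0,\Om}^2\lesssim h^4|u|_{2,\Om}^2$, and $\|A\|_{0,\Om}^2$ is kept as the leading term.

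The decisive computation is $\|C\|_{0,\Om}^2$. Using \eqref{CRRT} and the triangle centroidal second-moment identity $\int_K|\bx-\bold{M}_K|^2\dx=\tfrac{|K|H_K^2}{36}$, an elementwise computation gives $\|C\|_{0,K}^2=\tfrac{\lambda^2(\Pi_K^0 u)^2|K|H_K^2}{144}$. Summing over $K\in\cT_h$ and invoking Assumption \ref{ass:mesh} to replace $H_K$ by the uniform $H$, one obtains $\|C\|_{0,\Om}^2=\tfrac{\lambda^2 H^2}{144}\|\Pi_h^0 u\|_{0,\Om}^2$. Since $\|u\|_{0,\Om}=1$ and \eqref{Pi0est} imply $\|\Pi_h^0 u\|_{0,\Om}^2=1-\|u-\Pi_h^0 u\|_{0,\Om}^2 = 1+\cO(h^2|u|_{1,\Om}^2)$, and since $H=\cO(h)$, this equals $\tfrac{\lambda^2 H^2}{144}+\cO(h^4|u|_{1,\Om}^2)$. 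Collecting all the pieces yields \eqref{CRfinal1}.

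The main obstacle I anticipate is the coupling of $2(A,C)$ with the $-2\lambda(u-\PiCR u,u)$ piece inherited from \eqref{commutId}: each is only $\cO(h^3)$ individually, so neither can be estimated by Cauchy--Schwarz in isolation without losing the fourth-order accuracy required by the theorem, yet their prescribed combination is exactly $I_{\rm CR}$ and must be carried along undisturbed for the later refined analysis. Pinning down the constant $1/144$ in the $C$-contribution via the centroidal second-moment identity is the other point where care is required, since this is what locks in the value of the $\lambda^2 H^2$ coefficient in the final expansion.
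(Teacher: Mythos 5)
Your proposal is correct and follows essentially the same route as the paper: the same four-link decomposition of $\nabla u-\nabla_h\CRu$, the same matching of cross terms to $I_{\rm RT}$, $I_{\rm CR}^1$, $I_{\rm CR}^2$ and of $2(A,C)-2\lambda(u-\PiCR u,u)$ to $I_{\rm CR}$, the vanishing of $(C,D)$ via \eqref{CRRT}, and the centroidal second-moment computation giving the $\lambda^2H^2/144$ term with $\|\Pi_h^0u\|_{0,\Om}^2=1+\cO(h^2)$. No gaps.
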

\begin{proof}
Recall the expansion \eqref{commutId} of eigenvalues by the CR element as follows
$$
\lambda-\CRlam =\|\nabla_h (u - \CRu)\|_{0, \Om}^2-2\CRlam (u-\PiCR u,\CRu )-\CRlam \|u - \CRu\|_{0, \Om}^2.
$$
The second term on the right-hand side  can be decomposed into three terms as follows
\begin{equation}\label{CRexp2}
\begin{split}
\CRlam(u-\PiCR u,\CRu)=&\lambda(u-\PiCR u, u) + (\CRlam-\lambda)(u-\PiCR u,u) \\
&+ \CRlam(u-\PiCR u,\CRu-u).
\end{split}
\end{equation}
By the error estimates \eqref{CR:est} of the CR element,
\begin{equation*}
\big |(\CRlam -\lambda)(u -\PiCR u,u)\big |\lesssim |\CRlam -\lambda| \parallel u-\PiCR u \parallel_{0,\Om}\lesssim h^4|u|_{2,\Om}^2,
\end{equation*}
\begin{equation*}
\big |(u-\PiCR u,\CRu-u)\big |\lesssim \parallel u-\PiCR u \parallel_{0,\Om}\parallel \CRu-u\parallel_{0,\Om}\lesssim h^4|u|_{2,\Om}^2.
\end{equation*}
A substitution of these two estimates into  \eqref{CRexp2} yields
\begin{equation}\label{CRlambdaexpand}
\CRlam(u-\PiCR u,\CRu)=\lambda(u-\PiCR u, u )+\cO(h^4|u|_{2,\Om}^2).
\end{equation}
Note that
$$
\nabla u-\nabla_h \CRu= (\nabla u -\PiRT\nabla u) + (\PiRT\nabla u -\RTsigS) + (\RTsigS- \nabla_h \CRuPi) + (\nabla_h \CRuPi- \nabla_h \CRu),
$$
with the RT solution $\RTsigS$ of the source problem \eqref{RTbdPro} and the CR solution $\CRuPi$ of the modified problem \eqref{CRbdPipro}.
A substitution of the above decomposition and  \eqref{CRlambdaexpand} into \eqref{commutId} leads to
\begin{equation}\label{CRtotal0}
\begin{split}
\lambda-\CRlam=&\parallel \nabla u -\PiRT\nabla u  \parallel_{0,\Om}^2 + \parallel \PiRT\nabla u -\RTsigS \parallel_{0,\Om}^2+ \parallel \RTsigS- \nabla_h \CRuPi\parallel_{0,\Om}^2 \\
&+\parallel \nabla_h \CRuPi- \nabla_h \CRu\parallel_{0,\Om}^2 +2(\RTsigS- \nabla_h \CRuPi, \nabla_h \CRuPi-\nabla_h \CRu)\\
&+I_{\rm CR}+I_{\rm RT}+ I_{\rm CR}^1+I_{\rm CR}^2+\cO(h^4|u|_{2,\Om}^2)
\end{split}
\end{equation}
with $I_{\rm CR}$ , $I_{\rm RT}$, $I_{\rm CR}^1$ and $I_{\rm CR}^2$  defined in \eqref{crI}.
By the definition of the $L^2$ projection operator $\Pi_K^0$ and the corresponding error estimate \eqref{Pi0est},
\begin{equation*}
\parallel \Pi_K^0 u \parallel_{0,K}^2=\|u\|_{0,K}^2 - \|u -\Pi_K^0 u\|_{0,K}^2 \quad \text{ with  }\quad \|u -\Pi_K^0 u\|_{0,K}\lesssim h|u|_{1, K}.
\end{equation*}
According to the relation \eqref{CRRT} between the CR element and the RT element, namely $2(\RTsigS- \nabla_h \CRuPi)|_K=-\lambda  \Pi_K^0 u (\bold{x}-\bold{M}_K)$,  this leads to
%$$
%\parallel \RTsigS- \nabla_h \CRuPi\parallel_{0,\Om}^2=\frac{\lambda^2  }{144} \sum_{K\in \cT_h}H\parallel u \parallel_{0,K}^2
%+ \frac{\lambda^2  }{144} \sum_{K\in \cT_h}H\parallel   u - \Pi_K^0 u\parallel_{0,K}^2.
%$$
%Thanks to \eqref{Pi0est},
\begin{equation}\label{CRfinal01}
\parallel \RTsigS- \nabla_h \CRuPi\parallel_{0,\Om}^2= \frac{\lambda^2  }{144} \sum_{K\in \cT_h}H^2\parallel   u \parallel_{0,K}^2 + \cO(h^4|u|_{1, \Om}^2).
\end{equation}
It follows from the facts $\int_K \bold{x}-\bold{M}_K \dx=0$ and $\nabla_h \CRuPi-\nabla_h \CRu|_K\in P_0(K, \R^2)$ that
\begin{equation}\label{crIextra}
2(\RTsigS- \nabla_h \CRuPi, \nabla_h \CRuPi-\nabla_h \CRu)=-\lambda  (\Pi_h^0 u (\bold{x}-\bold{M}_K), \nabla_h \CRuPi-\nabla_h \CRu)=0.
\end{equation}
Recall the superconvergence \eqref{ECR1} of the RT element in Lemma \ref{Lm:bdRT}
\begin{equation*}
\parallel \PiRT\nabla u - \RTsigS\parallel_{0,\Om}\lesssim h^2 \big (| u|_{\frac{7}{2},\Om}+\kappa |\ln h|^{1/2}|u|_{2,\infty,\Om}\big ),
\end{equation*}
and the superclose property \eqref{CRESdiff} of the CR element in Lemma \ref{supereig1}
\begin{equation*}
\parallel\nabla_h \CRuPi - \nabla_h \CRu\parallel_{0,\Om}\lesssim h^2|u|_{2,\Om}.
\end{equation*}
Note that $\|u\|_{0, \Om}=1$. A substitution of \eqref{ECR1} , \eqref{CRESdiff}, \eqref{CRfinal01} and \eqref{crIextra} into \eqref{CRtotal0} leads to
\begin{equation*}
\begin{split}
\lambda-\CRlam&=\parallel (I-\PiRT)\nabla u \parallel_{0,\Om}^2   + \frac{\lambda^2  H^2}{144}+I_{\rm CR} +I_{\rm RT}+I_{\rm CR}^1+I_{\rm CR}^2+\cO(h^4|\ln h||u|_{{7\over 2},\Om}^2),
\end{split}
\end{equation*}
which completes the proof.
\end{proof}

According to Theorem \ref{Th:extraCR}, the asymptotic expansion of eigenvalues requires the analysis of   the following five terms
$$
\parallel (I-\PiRT)\nabla u \parallel_{0,\Om}^2,\ I_{\rm CR},\ I_{\rm RT},\ I_{\rm CR}^1,\ I_{\rm CR}^2.
$$
The fourth-order accurate expansions of the first two terms are analyzed  in Section \ref{sec:rt2} and \ref{sec:rtc}, respectively. Optimal estimates of $I_{\rm RT}$,  $I_{\rm CR}^1$ and $I_{\rm CR}^2$  are analyzed in Section \ref{sec:rth}, \ref{sec:cr1} and \ref{sec:cr2},  respectively.

\subsection{Taylor expansion of $\parallel (I-\PiRT)\nabla u \parallel_{0,\Om}^2$}\label{sec:rt2}

Define  two short-hand notations for the RT element
\begin{align*}
\phi_{\rm RT}^1(\bold{x})&=(x_1-M_1, -x_2+M_2)^T,\quad \phi_{\rm RT}^2(\bold{x})=(x_2-M_2, x_1-M_1)^T.
\end{align*}
Note that for $i=1$ and $2$,
$$
\rm{div} \phi_{\rm RT}^i = 0, \quad \nabla P_2(K, \R^2) = \rm{RT(K, \R^2)} + \text{span}\{ \phi_{\rm RT}^1,\ \phi_{\rm RT}^2\}.
$$
Define
\begin{equation}\label{RTcdef}
\RTbeta^{ij} := \frac{1}{h^2|K|}\int_K \big((I-\PiRT)\phi_{\rm RT}^i\big )^T(I-\PiRT)\phi_{\rm RT}^j\dx.
\end{equation}
\begin{lemma}\label{lm:gammaconstant}
Under Assumption \ref{ass:mesh}, constants $\RTbeta^{ij} $ in \eqref{RTcdef} are independent of the mesh size $h$.
\end{lemma}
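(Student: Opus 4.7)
The plan is to prove independence from $h$ by a standard affine scaling argument combined with the invariance of the RT element and its Fortin interpolation under this scaling. Under Assumption \ref{ass:mesh}, every triangle $K\in\cT_h$ is congruent, up to translation and a possible point reflection, to $h\hat K$ for a fixed reference triangle $\hat K$ of unit diameter. Accordingly, introduce an affine map $F_K:\hat K\to K$ of the form $F_K(\hat{\bold x})=\pm h\hat{\bold x}+\bold c_K$, with the sign determined by the orientation of $K$. Since $F_K$ sends the centroid $\hat{\bold M}$ of $\hat K$ to $\bold M_K$, the centered variable obeys $\bold x-\bold M_K=\pm h(\hat{\bold x}-\hat{\bold M})$, and therefore $\phi_{\rm RT}^i(\bold x)=\pm h\,\hat\phi_{\rm RT}^i(\hat{\bold x})$, where $\hat\phi_{\rm RT}^i$ denotes the analogue of $\phi_{\rm RT}^i$ on $\hat K$ relative to $\hat{\bold M}$.

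The crucial step is to check that the Fortin interpolation operator commutes with this transformation. Writing the transformation as $\tau(\bold x)=\pm h\,\hat\tau(\hat{\bold x})$, one verifies directly from the form $\rm RT(K,\R^2)=P_0(K,\R^2)+\bold x P_0(K,\R)$ that it is a bijection $\rm RT(\hat K,\R^2)\to\rm RT(K,\R^2)$. Moreover, each defining normal moment transforms as $\int_e\tau^T\bold n_e\,ds=\pm h^2\int_{\hat e}\hat\tau^T\hat{\bold n}_{\hat e}\,d\hat s$, because unit normals are invariant under isotropic scaling and the edge measure $ds$ scales as $h\,d\hat s$. Consequently $(\PiRT\phi_{\rm RT}^i)(\bold x)=\pm h(\widehat{\PiRT}\hat\phi_{\rm RT}^i)(\hat{\bold x})$, and hence $((I-\PiRT)\phi_{\rm RT}^i)(\bold x)=\pm h((I-\widehat{\PiRT})\hat\phi_{\rm RT}^i)(\hat{\bold x})$.

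Substituting into \eqref{RTcdef} and changing variables supplies a factor $h^2$ from $d\bold x$ together with $(\pm h)(\pm h)=h^2$ from the two interpolation-error factors (the two $\pm$ signs agree because they come from the same map $F_K$), giving
\[
\int_K\big((I-\PiRT)\phi_{\rm RT}^i\big)^T(I-\PiRT)\phi_{\rm RT}^j\,d\bold x
=h^4\int_{\hat K}\big((I-\widehat{\PiRT})\hat\phi_{\rm RT}^i\big)^T(I-\widehat{\PiRT})\hat\phi_{\rm RT}^j\,d\hat{\bold x}.
\]
Since $|K|=h^2|\hat K|$, the prefactor $1/(h^2|K|)$ in \eqref{RTcdef} exactly cancels the $h^4$, yielding an expression depending only on $\hat K$ and hence independent of both $h$ and the particular element $K$; in particular the sign factors disappear, so reflected elements contribute the same value.

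The main obstacle is establishing the commutation of $\PiRT$ with the affine scaling in a way that genuinely accounts for both orientations permitted by the uniform-mesh hypothesis; once this compatibility is in place the rest reduces to bookkeeping of $h$-powers. A minor point worth handling carefully is that the argument gives not only $h$-independence but also $K$-independence within a given mesh, which is what the later sections implicitly require when summing the local identities over $\cT_h$.
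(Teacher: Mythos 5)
Your proof is correct. It rests on the same underlying observation as the paper's --- scale invariance of the quantity in \eqref{RTcdef} on a uniform mesh --- but the execution is genuinely different. The paper never introduces a reference element: it expands $\PiRT\phi_{\rm RT}^i=\sum_j a_{\rm RT}^{ij}(\bold{x}-\bold{p}_j)$ explicitly (its \eqref{RTintEx}), observes that the coefficients $a_{\rm RT}^{ij}=\frac{|e_j|}{2|K|}(\phi_{\rm RT}^i(\bold{m}_j))^T\bold{n}_j$ are scale-invariant under Assumption \ref{ass:mesh}, and then checks term by term that each inner product appearing in the expanded form of $\RTbeta^{ij}$, once divided by $h^2|K|$, is independent of $h$. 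You instead establish once that the Fortin operator commutes with the affine pushforward $\tau(\bold{x})=\pm h\,\hat\tau(\hat{\bold{x}})$ and then perform a single change of variables. Your route buys two things: it avoids the term-by-term bookkeeping, and it makes explicit the element-independence of $\RTbeta^{ij}$ (in particular that the two reflected triangle types in a uniform mesh give the same value), which the paper's notation presupposes but its proof does not isolate. One small imprecision: under the point reflection the outward unit normal is \emph{not} invariant --- it flips sign --- so the normal moment transforms with the factor $(\pm1)^2h^2=+h^2$ rather than $\pm h^2$; since the sign from the normal cancels against the sign in the transformation of $\tau$, this does not affect the commutation of $\PiRT$ with the pushforward or the final cancellation of $h$-powers.
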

\begin{proof}
Notice that
\begin{equation}\label{RTintEx}
\PiRT\phi_{\rm RT}^i= \sum_{j=1}^3 a_{\rm RT}^{ij} (\bold{x}-\bold{p}_j)\quad  \text{ with }\quad a_{\rm RT}^{ij}=\frac{1}{2|K|}\int_{e_j}(\phi_{\rm RT}^i)^T \bold{n}_j\ds.
\end{equation}
Since $\phi_{\rm RT}^i\in P_1(K, \R^2)$,
$$
a_{\rm RT}^{ij}=\frac{|e_j|}{2|K|} (\phi_{\rm RT}^i(\bold{m}_j))^T \bold{n}_j
$$
with $\bold{m}_j$ the midpoint of edge $e_j$.
This, together with Assumption \ref{ass:mesh}, implies that both $a_{\rm RT}^{1j}$ and $a_{\rm RT}^{2j}$ are constants independent of the mesh size $h$. It follows from \eqref{RTcdef} and \eqref{RTintEx} that
$$
\RTbeta^{ij} = \frac{1}{h^2|K|}\big ((\phi_{\rm RT}^i, \phi_{\rm RT}^j )_{0, K} - \sum_{k=1}^3 (a_{\rm RT}^{jk}\phi_{\rm RT}^i + a_{\rm RT}^{ik}\phi_{\rm RT}^j, \bold{x}-\bold{p}_k)_{0, K} + \sum_{k,l=1}^3 a_{\rm RT}^{ik}a_{\rm RT}^{jl} (\bold{x}-\bold{p}_k, \bold{x}-\bold{p}_l)_{0, K}\big ).
$$
By the definition of $\phi_{\rm RT}^i$ and Assumption \ref{ass:mesh}, for any $1\leq i,j\leq 2,\ 1\leq k,l\leq 3$,
$$
\frac{1}{h^2|K|}(\phi_{\rm RT}^i, \phi_{\rm RT}^j )_{0, K},\ \frac{1}{h^2|K|}(\phi_{\rm RT}^j, \bold{x}-\bold{p}_k)_{0, K},\  \frac{1}{h^2|K|}(\bold{x}-\bold{p}_k, \bold{x}-\bold{p}_l)_{0, K}
$$
are constants independent of $h$. Thus, constants
$
\RTbeta^{ij}
$
in \eqref{RTcdef}
are also independent of the mesh size $h$, which completes the proof.
\end{proof}

The following lemma presents the Taylor expansion of the interpolation error  of the RT element for any quadratic polynomials.
\begin{lemma}\label{RT2h40}
For any $w\in P_2(K, \R)$,
\begin{equation}\label{identity:RT2}
\begin{split}
\parallel (I-\PiRT)\nabla w\parallel_{0,K}^2=&h^2\big (\frac{\RTbeta^{11}}{4}\parallel\partial_{x_1x_1}w-\partial_{x_2x_2}w\parallel_{0,K}^2 +  \RTbeta^{22}\parallel\partial_{x_1x_2}w\parallel_{0,K}^2 \\
&+ \RTbeta^{12}\int_K(\partial_{x_1x_1}w-\partial_{x_2x_2}w)\partial_{x_1x_2}w\dx\big ),
\end{split}
\end{equation}
where $\RTbeta^{11}$, $\RTbeta^{12}$ and $\RTbeta^{22}$  in \eqref{RTcdef} are independent constant of the mesh size $h$.
\end{lemma}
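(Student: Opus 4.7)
The plan is to exploit the fact that $\nabla P_2(K,\R^2) = \mathrm{RT}(K,\R^2) + \mathrm{span}\{\phi_{\rm RT}^1,\phi_{\rm RT}^2\}$, which is stated just before the definition of $\gamma_{\rm RT}^{ij}$. So for $w\in P_2(K,\R)$ I would write
\[
\nabla w = \tau + c_1\,\phi_{\rm RT}^1 + c_2\,\phi_{\rm RT}^2,\qquad \tau\in \mathrm{RT}(K,\R^2),
\]
with scalars $c_1,c_2$ that depend only on $w$ (and not on $\bold x$, since $\nabla^2 w$ is constant on $K$ for $w\in P_2$). Because $\PiRT$ is the identity on $\mathrm{RT}(K,\R^2)$, applying $I-\PiRT$ gives
\[
(I-\PiRT)\nabla w = c_1\,(I-\PiRT)\phi_{\rm RT}^1 + c_2\,(I-\PiRT)\phi_{\rm RT}^2,
\]
and the definition \eqref{RTcdef} of $\gamma_{\rm RT}^{ij}$ immediately yields
\[
\|(I-\PiRT)\nabla w\|_{0,K}^2 = h^2|K|\bigl(c_1^2\,\RTbeta^{11} + 2 c_1 c_2\,\RTbeta^{12} + c_2^2\,\RTbeta^{22}\bigr).
\]

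The remaining step is to identify $c_1$ and $c_2$ in terms of the second derivatives of $w$. I would match coefficients of $x_1, x_2$ in the two components of $\nabla w$ against the linear parts of $\tau \in \mathrm{RT}(K,\R^2)$ (which contribute $(bx_1, bx_2)$ for a single scalar $b$) and of $c_1\phi_{\rm RT}^1 + c_2\phi_{\rm RT}^2$ (which contribute $(c_1 x_1 + c_2 x_2,\, c_2 x_1 - c_1 x_2)$). Writing $\alpha=\partial_{x_1x_1}w$, $\beta=\partial_{x_1x_2}w$, $\gamma=\partial_{x_2x_2}w$, the matching $\alpha = b+c_1$, $\beta=c_2$, $\gamma=b-c_1$ forces
\[
c_1 = \tfrac{1}{2}(\partial_{x_1x_1}w-\partial_{x_2x_2}w),\qquad c_2 = \partial_{x_1x_2}w.
\]

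Since $c_1,c_2$ are constants on $K$, one has $c_1^2|K|=\tfrac14\|\partial_{x_1x_1}w-\partial_{x_2x_2}w\|_{0,K}^2$, $c_2^2|K|=\|\partial_{x_1x_2}w\|_{0,K}^2$, and $c_1 c_2|K|=\tfrac12\int_K(\partial_{x_1x_1}w-\partial_{x_2x_2}w)\partial_{x_1x_2}w\,dx$; substituting into the displayed expression above produces the claimed identity \eqref{identity:RT2}. The fact that $\RTbeta^{ij}$ are $h$-independent was already shown in Lemma \ref{lm:gammaconstant} and requires no further work.

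The proof is essentially algebraic and I do not expect a genuine obstacle; the only point requiring care is the uniqueness of the decomposition $\nabla w = \tau + c_1\phi_{\rm RT}^1 + c_2\phi_{\rm RT}^2$, which is needed so that $c_1,c_2$ are well-defined. This follows because $\phi_{\rm RT}^1,\phi_{\rm RT}^2$ together with a basis of $\mathrm{RT}(K,\R^2)$ span the $5$-dimensional space $\nabla P_2(K,\R^2)$ and are linearly independent (observed directly from their coordinate expressions, since neither $\phi_{\rm RT}^1$ nor $\phi_{\rm RT}^2$ lies in $\mathrm{RT}(K,\R^2)$ due to their divergence-free, non-radial structure).
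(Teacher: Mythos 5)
Your proposal is correct and follows essentially the same route as the paper: both use the decomposition $\nabla w=\tau+c_1\phi_{\rm RT}^1+c_2\phi_{\rm RT}^2$ and identify $c_1=\tfrac12(\partial_{x_1x_1}w-\partial_{x_2x_2}w)$, $c_2=\partial_{x_1x_2}w$, the only cosmetic difference being that the paper extracts these coefficients by applying the operators $D_1v=\partial_{x_1}v_1-\partial_{x_2}v_2$ and $D_2v=\partial_{x_2}v_1+\partial_{x_1}v_2$ (which annihilate $\mathrm{RT}(K,\R^2)$ and satisfy $D_i\phi_{\rm RT}^j=2\delta_{ij}$) rather than by matching monomial coefficients. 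The remainder of your argument coincides with the paper's.
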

\begin{proof}
For any $w\in P_2(K, \R)$, since
$
\nabla P_2(K, \R^2) = \rm{RT(K, \R^2)} + \text{span}\{ \phi_{\rm RT}^1,\ \phi_{\rm RT}^2\},
$
\begin{equation}\label{expan:rt0}
(I-\PiRT)\nabla w=a_1(I-\PiRT)\phi_{\rm RT}^1 + a_2(I-\PiRT)\phi_{\rm RT}^2
\end{equation}
with the coefficients $a_i$ to be determined. For any vector $v=(v_1, v_2)$, define
$$
D_1 v=\partial_{x_1}v_1 -\partial_{x_2}v_2,\quad D_2 v=\partial_{x_2}v_1 + \partial_{x_1}v_2.
$$
Note that
$
D_i\phi_{\rm RT}^j = 2\delta_{ij}
$
and
$
D_i\bold{x}=0
$
for any $1\leq i, j\leq 2$.
By applying the operators $D_1$ and $D_2$ to the both sides of \eqref{expan:rt0},
\begin{equation}\label{expan:rt1}
a_1={1\over 2}(\partial_{x_1x_1} - \partial_{x_2x_2}) w,\quad a_2=\partial_{x_1x_2} w.
\end{equation}
A substitution of \eqref{expan:rt1} to \eqref{expan:rt0} leads to
\begin{equation}\label{identity:RT}
(I-\PiRT)\nabla w=\frac{ (\partial_{x_1x_1}w-\partial_{x_2x_2}w)}{2}(I-\PiRT)\phi_{\rm RT}^1 +  \partial_{x_1x_2}w (I-\PiRT)\phi_{\rm RT}^2.
\end{equation}
It follows from \eqref{RTcdef} and \eqref{identity:RT} that
\begin{equation}\label{identity:RT20}
\begin{split}
\parallel (I-\PiRT)\nabla w\parallel_{0,K}^2=&h^2\big (\frac{\RTbeta^{11}}{4}\parallel\partial_{x_1x_1}w-\partial_{x_2x_2}w\parallel_{0,K}^2 +  \RTbeta^{22}\parallel\partial_{x_1x_2}w\parallel_{0,K}^2 \\
&+ \RTbeta^{12}\int_K(\partial_{x_1x_1}w-\partial_{x_2x_2}w)\partial_{x_1x_2}w\dx\big ),
\end{split}
\end{equation}
which completes the proof.
\end{proof}

For  a smooth enough function $u$, to obtain a fourth-order accurate expansion of the interpolation error $ (I-\PiRT)\nabla u$, an orthogonal property as analyzed in Lemma \ref{Lm:m} is needed. To this end, introduce the canonical interpolation of the Morley element in \cite{morley1968triangular,ming2006morley}: let $\Pi_M v|_K\in P_2(K, \R)$  with
\begin{equation*}
\PiM v(\bp_i)= v(\bp_i),\quad \mathlarger{\int}_{e_i} {\partial \PiM v\over \partial \bold{n}}\ds= \int_{e_i} {\partial v\over \partial \bold{n}}\ds\quad 1\leq i\leq 3.
\end{equation*}
It follows that
\begin{equation}\label{hessian:m}
\int_e \nabla (I-\PiM) v\ds=0,\quad
\int_K \nabla^2 (I-\PiM) v\dx=0 \text{ for any edge $e$ and any element $K$}.
\end{equation}
For any element $K$, there exists the following error estimates for the interpolation error \cite{ming2006morley}
\begin{equation}\label{interr:m}
|(I-\PiM) v|_{m, K}\lesssim h^{3-m} |v|_{3, K}, \ \forall\ 0 \leq m\leq 3.
\end{equation}
%Define  four short-hand notations for the Morley element
%\begin{equation*}\label{base:m}
%\begin{split}
%\phi_{\rm M}^i(x)&=(3\psi_{i-1}\psi_{i+1} - \psi_{i-1} - \psi_{i+1}+1)(\psi_{i-1} - \psi_{i+1}),\ 1\leq i\leq3,
%\\
%\phi_{\rm M}^4(x)&=\psi_1\psi_2\psi_3 +  {1\over 6} \sum_{j=1}^3\psi_j(\psi_j-1),
%\end{split}
%\end{equation*}
%with $\psi_i$ the barycenter coordinates.
%Thanks to the identities \eqref{nt}, \eqref{area} and \eqref{nlambda}, it is easy to verify that for any $1\leq i\leq 3,\ 1\leq j\leq 4$,
%$$
%\PiM \phi_{\rm M}^j=0, \quad {\partial^3\over \partial \bold{t}_{i}^3}\phi_{\rm M}^j(x)={12\over |e_i|^3}\delta_{ij}.
%$$
%It follows that for any $v\in P_3(K, \R)$,
%\begin{equation}\label{property:m}
%(I - \PiM)v  = \sum_{i=1}^4 c_i\phi_{\rm M}^i
%\end{equation}
%with $c_i= {|e_{i}|^3 \over 12}{\partial^3 v  \over \partial \bold{t}_{i}^3} $ for any $1\leq i\leq 3$.
%Note that
%\begin{equation}\label{property:m1}
%{1\over|e_i|}\int_{e_i}  \phi_{\rm M}^{j} (\psi_{i-1}-{1\over 2})\ds= -{1\over 10}\delta_{ij},\quad \int_{e_i}\phi_{\rm M}^4(\psi_{i-1}-{1\over 2})\ds=0.
%\end{equation}

\begin{lemma}\label{Lm:m} Under the Assumption \ref{ass:mesh}, for any $u\in H^4(\Om, \R)$, it holds that
$$
\big|\sum\limits_{K\in\mathcal{T}_h}((I-\PiRT)\nabla \PiM u, (I-\PiRT)\nabla (I - \PiM)u)_{0, K}\big|\lesssim h^4|u|_{4, \Om}^2,
$$
\end{lemma}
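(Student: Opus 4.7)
The plan is to collapse the target expression to pure boundary integrals via integration by parts, and then exploit even/odd symmetries around each edge midpoint to identify the only $O(h^4)$ surviving contribution; a direct Cauchy--Schwarz bound would be one order too weak.

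First, I would eliminate the outer Fortin projection acting on the Morley error. Setting $v:=(I-\PiM)u$, the Morley DOF gives $\int_e \partial v/\partial\bold{n}\,ds=0$ on every edge $e$. Since $\PiRT\nabla v\in\mathrm{RT}(K,\R^2)$ is uniquely determined by its three edge normal moments, all of which vanish, $\PiRT\nabla v\equiv0$ on every $K$. Hence $(I-\PiRT)\nabla v=\nabla v$ element-wise and the inner product collapses to $((I-\PiRT)\nabla\PiM u,\nabla v)_{0,K}$.

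Next, I would integrate by parts in both $(\nabla\PiM u,\nabla v)_{0,K}$ and $(\PiRT\nabla\PiM u,\nabla v)_{0,K}$. Their volume contributions involve $\Delta\PiM u$ and $\mathrm{div}(\PiRT\nabla\PiM u)$; the commuting property $\mathrm{div}\,\PiRT\tau|_K=\Pi_K^0\,\mathrm{div}\,\tau$ of the Fortin interpolation, combined with the fact that $\Delta\PiM u$ is already piecewise constant, forces the two to coincide, so the volume terms cancel and only
\[
((I-\PiRT)\nabla\PiM u,\nabla v)_{0,K}=\int_{\partial K}v\cdot\big((I-\PiRT)\nabla\PiM u\big)\cdot\bold{n}_K\,ds
\]
remains. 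Because $\nabla\PiM u|_K\in P_1(K,\R^2)$, on each edge $e$ the normal trace $\partial\PiM u/\partial\bold{n}$ is affine in the tangential coordinate $s_e$ centered at the midpoint of $e$ and $\PiRT$ strips off its mean, so the edge integrand reduces to $v\,s_e\,r_K$ with $r_K:=\bold{t}_e^T\nabla^2\PiM u|_K\,\bold{n}_e=\Pi_K^0(\bold{t}_e^T\nabla^2 u\,\bold{n}_e)$.

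Third, I would collect the contributions edge by edge. For an interior edge $e=K_1\cap K_2$, the regrouping $r_{K_1}v_{K_1}-r_{K_2}v_{K_2}=-\{r_e\}[\PiM u]+[r_e](u-\{\PiM u\})$, where $\{\cdot\}$ and $[\cdot]$ denote the average and the jump across $e$, produces two pieces. The restriction $[\PiM u]|_e$ is a $P_2$ polynomial on $e$ vanishing at both endpoints (since Morley is $C^0$ at vertices), hence a scalar multiple of the edge bubble $|e|^2/4-s_e^2$, which is \emph{even} in $s_e$; therefore $\int_e[\PiM u]\,s_e\,ds=0$. On a boundary edge $e\subset\partial\Om$, $u\equiv0$ forces $(I-\PiM)u|_e=-\PiM u|_K|_e$ to be a $P_2$ polynomial vanishing at its two boundary vertices, again even in $s_e$, so the boundary contribution also vanishes.

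Finally, only $\sum_{e\in\cE_h^i}[r_e]\int_e(u-\{\PiM u\})s_e\,ds$ survives, which I bound by Cauchy--Schwarz. With $\|s_e\|_{L^2(e)}\sim h^{3/2}$, the trace inequality and the Morley error estimate \eqref{interr:m}, one obtains $\big|\int_e(u-\{\PiM u\})s_e\,ds\big|\lesssim h^{4}|u|_{3,K_1\cup K_2}$. For the jump, applying the standard trace-and-approximation bound $\|\Pi_K^0 w-w\|_{L^2(\partial K)}^2\lesssim h\,|w|_{1,K}^2$ to $w=\bold{t}_e^T\nabla^2 u\,\bold{n}_e$ yields $\big(\sum_{e\in\cE_h^i}[r_e]^2\big)^{1/2}\lesssim|u|_{3,\Om}$. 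Combining these gives the total bound $\lesssim h^{4}|u|_{3,\Om}^2\lesssim h^{4}\|u\|_{4,\Om}^2$, which delivers the claim. The main obstacle is spotting the parity cancellations in the third step: without them the naive Cauchy--Schwarz estimate $\|(I-\PiRT)\nabla\PiM u\|_{0,\Om}\,\|\nabla(I-\PiM)u\|_{0,\Om}\lesssim h^{3}|u|_{2,\Om}|u|_{3,\Om}$ loses one full power of $h$.
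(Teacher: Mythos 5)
Your argument is correct, and its first half is identical to the paper's: both proofs observe that $\PiRT\nabla (I-\PiM)u=0$, that $(I-\PiRT)\nabla\PiM u$ is elementwise divergence-free so only edge integrals survive after integration by parts, and that the edge normal trace equals $s_e$ times the constant $r_K=\mathbf{t}_e^T\nabla^2\PiM u\,\mathbf{n}_e=\Pi_K^0(\mathbf{t}_e^T\nabla^2u\,\mathbf{n}_e)$. Where you genuinely depart is in extracting the last power of $h$ from the edge sum. The paper first inserts the $P_3$ Lagrange interpolant $\PiPt$ (which is what forces $|u|_{4,\Om}^2$ on the right-hand side), Taylor-expands $(I-\PiM)\PiPt u$ along each edge to evaluate $\int_e(I-\PiM)\PiPt u\,(\psi-\tfrac12)\,ds$ exactly as $-\tfrac{|e|^4}{720}\,\partial^3\PiPt u/\partial\mathbf{t}^3$, and then uses the interelement continuity of $\PiPt u$ to convert the element sum into jumps of the mixed second derivative, bounded by the trace inequality. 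You instead split the Morley error on each interior edge as $-\{r_e\}[\PiM u]+[r_e](u-\{\PiM u\})$, annihilate the first piece by parity (the Morley jump is a $P_2$ edge bubble vanishing at the vertices, hence even in $s_e$, integrated against the odd weight $s_e$), and pair the $O(h^4)$ edge moment of $u-\{\PiM u\}$ with the $\ell^2$-small jumps $[r_e]$ of the piecewise constant Hessian projection. This avoids $\PiPt$ entirely and delivers the slightly sharper bound $h^4|u|_{3,\Om}^2$. One caveat you should state explicitly: your treatment of boundary edges invokes $u\equiv0$ on $\partial\Om$, which is not among the hypotheses as literally written ($u\in H^4(\Om,\R)$ only); the paper's proof has the same hidden dependence (there $\partial^3\PiPt u/\partial\mathbf{t}^3$ vanishes on boundary edges only because $u$ does), and the condition holds in every application of the lemma, so this is a shared blemish rather than a gap in your reasoning.
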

\begin{proof}
By the definition of the interpolations $\PiRT$ and $\PiM$,
$$
\PiRT \nabla (I - \PiM)u=0.
$$
It remains to prove that
$$
\big|\sum\limits_{K\in\mathcal{T}_h}((I-\PiRT)\nabla \PiM u, \nabla (I - \PiM)u)_{0, K}\big|\lesssim h^4|u|_{4, \Om}^2.
$$
Let $\PiPt$ be the Larange interpolation operator of the $H^1$ conforming $P_3$ element, which leads to the following decomposition:
\be\label{orth:0}
\begin{split}
&\sum\limits_{K\in\mathcal{T}_h}((I-\PiRT)\nabla \PiM u, \nabla (I - \PiM)u)_{0,K}\\
=&\sum\limits_{K\in\mathcal{T}_h} ((I-\PiRT)\nabla \PiM \PiPt u, \nabla (I - \PiM)\PiPt u)_{0,K} \\
& +\sum\limits_{K\in\mathcal{T}_h} ((I-\PiRT)\nabla \PiM (I-\PiPt )u, \nabla (I - \PiM)\PiPt u)_{0,K}\\
&+\sum\limits_{K\in\mathcal{T}_h} ((I-\PiRT)\nabla \PiM u, \nabla (I - \PiM)(I-\PiPt) u)_{0,K}.
\end{split}
\ee
Since $| (I-\PiPt) u|_{i,\Om}\lesssim h^{4-i}|u|_{4, \Om}$ for $0\leq i\leq 3$, the second and third terms on the right--hand side of \eqref{orth:0}
 can be  estimated as,  respectively,
\be\label{orth:1}
\begin{split}
\big|\sum\limits_{K\in\mathcal{T}_h}(I-\PiRT)\nabla \PiM (I-\PiPt )u, \nabla (I - \PiM)\PiPt u)_{0,K} \big|&\lesssim h^4|u|_{4,\Om}^2,\\
\big|\sum\limits_{K\in\mathcal{T}_h}(I-\PiRT)\nabla \PiM u, \nabla (I - \PiM)(I-\PiPt) u)_{0,K} \big|&\lesssim h^4|u|_{4,\Om}^2.
\end{split}
\ee
To analyze the first term on the right-hand side of \eqref{orth:0}, let $v_h =\PiM \PiPt u$.
Since $\text{div}_h (\nabla_h v_h)$ is a piecewise constant, an integration by parts implies that
$$
\text{div} (I-\PiRT)\nabla v_h\big |_K={1\over |K|}\int_{\partial K} (I-\PiRT)(\nabla v_h) \cdot \bold{n}\ds=0
$$
for any element $K\in\mathcal{T}_h$. Consequently,
\begin{equation}\label{lm34n1}
((I-\PiRT)\nabla v_h, \nabla (I - \PiM)\PiPt u)_{0, K} = \sum_{i=1}^3 \int_{e_i} (I - \PiM)\PiPt u  ((I-\PiRT)\nabla v_h \cdot \bold{n})\ds.
\end{equation}
Given edge $e_i$ of element $K$, recall that $\bold{n}_i$ and  $\bold{t}_i$ are the unit outward normal vector and the unit tangent vector, respectively. Note that ${\partial^2v_h\over \partial \bold{t}_i\partial \bold{n}_i}\in P_0(K, \R)$ and
\begin{equation}\label{lm34n2}
(I-\PiRT)\nabla v_h\cdot \bold{n}_i|_{e_i}={\partial v_h\over \partial \bold{n}_i} - {\partial v_h\over \partial \bold{n}_i}(\bold{m}_i)=|e_i|{\partial^2 v_h\over \partial \bold{t}_i\partial \bold{n}_i} (\psi_{i-1}-{1\over 2}).
\end{equation}
with $\bold{m}_i$ the midpoint of edge $e_i$ and $\psi_i$ the barycenter coordinates.
It follows from \eqref{lm34n1} and \eqref{lm34n2} that
\begin{equation}\label{lm34n3}
((I-\PiRT)\nabla v_h, \nabla (I - \PiM)\PiPt u)_{0, K} = \sum_{i=1}^3  |e_i| {\partial^2 v_h\over \partial \bold{t}_i\partial \bold{n}_i}\int_{e_i} (I - \PiM)\PiPt u  (\psi_{i-1}-{1\over 2})\ds.
\end{equation}
Since $\PiM \PiPt u\in P_2(K, \R)$, the Taylor expansion indicates that
\begin{equation}\label{lm34n40}
\begin{split}
(I - \PiM)\PiPt u\big |_{e_i} = &(I - \PiM)\PiPt u(\bold{m}_i) + |e_i|{\partial (I - \PiM)\PiPt u\over \partial \bold{t}_i}(\bold{m}_i) (\psi_{i-1} - {1\over 2}) \\
&+  {|e_i|^2\over 2}{\partial^2 (I - \PiM)\PiPt u\over \partial \bold{t}_i^2}(\bold{m}_i) (\psi_{i-1} - {1\over 2})^2 \\
&+ {|e_i|^3\over 6}{\partial^3\PiPt u\over \partial \bold{t}_i^3}(\bold{m}_i) (\psi_{i-1} - {1\over 2})^3.
\end{split}
\end{equation}
The Taylor expansion of ${\partial (I - \PiM)\PiPt u\over \partial \bold{t}_i}$ reads
\begin{equation*}
\begin{split}
{\partial (I - \PiM)\PiPt u\over \partial \bold{t}_i}\big |_{e_i} = &{\partial (I - \PiM)\PiPt u\over \partial \bold{t}_i}(\bold{m}_i) + |e_i|{\partial^2 (I - \PiM)\PiPt u\over \partial \bold{t}_i^2}(\bold{m}_i) (\psi_{i-1} - {1\over 2})\\
&+ {|e_i|^2\over 2}{\partial^3 \PiPt u\over \partial \bold{t}_i^3}(\bold{m}_i) (\psi_{i-1} - {1\over 2})^2.
\end{split}
\end{equation*}
By \eqref{hessian:m} and the fact that $\int_{e_i} \psi_{i-1} - {1\over 2}\ds=0$ and  $\int_{e_i} (\psi_{i-1} - {1\over 2})^2\ds={|e_i|\over 12}$,
\begin{equation*}
0=\int_{e_i}{\partial (I - \PiM)\PiPt u\over \partial \bold{t}_i}\ds = |e_i|{\partial (I - \PiM)\PiPt u\over \partial \bold{t}_i}(\bold{m}_i) + {|e_i|^3\over 24}{\partial^3\PiPt u\over \partial \bold{t}_i^3}.
\end{equation*}
Thus,
\begin{equation}\label{lm34n4}
{\partial (I - \PiM)\PiPt u\over \partial \bold{t}_i}(\bold{m}_i) =- {|e_i|^2\over 24}{\partial^3  \PiPt u\over \partial \bold{t}_i^3}.
\end{equation}
Since $\int_{e_i} (\psi_{i-1}-{1\over 2})^j\ds=0$ for $j=1$ and $3$, a substitution of \eqref{lm34n4} into \eqref{lm34n40} yields
\begin{equation*}
\int_{e_i} (I - \PiM)\PiPt u  (\psi_{i-1}-{1\over 2})\ds= -{|e_i|^4\over 720}{\partial^3 \PiPt u\over \partial \bold{t}_i^3}.
\end{equation*}
By the above equation and \eqref{lm34n3},
\begin{equation*}
((I-\PiRT)\nabla v_h, \nabla (I - \PiM)\PiPt u)_{0, K} = -\sum_{i=1}^3 {|e_{i}|^4 \over 720}\int_{e_i}{\partial^2 v_h\over \partial \bold{t}_i\partial \bold{n}_i}  {\partial^3 \PiPt u  \over \partial \bold{t}_{i}^3} \ds.
\end{equation*}
Since $\PiPt u$ is continuous on interior edges, and $\bold{t}_i|_{K_1}=-\bold{t}_i|_{K_2}$ and $\bold{n}_i|_{K_1}=-\bold{n}_i|_{K_2}$, a summation over all the elements leads to
\begin{equation*}
\sum\limits_{K\in\mathcal{T}_h}((I-\PiRT)\nabla v_h, \nabla (I - \PiM)\PiPt u)_{0, K} = -\sum_{e\in\cE_h} {|e|^4 \over 720}\int_{e}\big [{\partial^2 (v_h - u)\over \partial \bold{t}_e\partial \bold{n}_e}\big ] {\partial^3 \PiPt u  \over \partial \bold{t}_e^3} \ds.
\end{equation*}
By the trace inequality, the triangle inequality and the above equation,
\begin{equation}  \label{orth:2}
\big |\sum\limits_{K\in\mathcal{T}_h}((I-\PiRT)\nabla v_h, \nabla (I - \PiM)\PiPt u)_{0, K}\big | \lesssim h^4|u|_{3,\Om}^2.
\end{equation}
A substitution of \eqref{orth:1} and \eqref{orth:2} into \eqref{orth:0} leads to
$$
\big |\sum\limits_{K\in\mathcal{T}_h}((I-\PiRT)\nabla \PiM u, \nabla (I - \PiM)u)_{0, K}\big |\lesssim h^4|u|_{4, \Om}^2,
$$
which completes the proof.
\end{proof}

Thanks to Lemmas \ref{RT2h40} and  \ref{Lm:m}, there exists the following fourth-order accurate expansion of the term $\parallel (I-\PiRT)\nabla u\parallel_{0, \Om}^2$ in Theorem \ref{Th:extraCR}.
\begin{lemma}\label{RT2h4}
For any  $u\in H^{4}(\Om,\mathbb{R})$,
\begin{equation} \label{identity:RTu}
\begin{split}
\parallel (I-\PiRT)\nabla u\parallel_{0,\Om}^2=& h^2\big (\frac{\RTbeta^{11}}{4}\parallel\partial_{x_1x_1}u-\partial_{x_2x_2}u\parallel_{0,\Om}^2 +  \RTbeta^{22}\parallel\partial_{x_1x_2}u\parallel_{0,\Om}^2 \\
&+ \RTbeta^{12}\int_{\Om}(\partial_{x_1x_1}u-\partial_{x_2x_2}u)\partial_{x_1x_2}u\dx\big ) + \cO(h^4|u|_{4, \Om}^2).
\end{split}
\end{equation}
\end{lemma}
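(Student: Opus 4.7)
The plan is to decompose $u$ via the Morley interpolant $\PiM u$, exploit the fact that Lemma \ref{RT2h40} gives the \emph{exact} expansion for piecewise quadratics, and use Lemma \ref{Lm:m} together with standard interpolation estimates to dispose of the remainders. Writing $u = \PiM u + (I-\PiM)u$, I would square the resulting decomposition of $(I-\PiRT)\nabla u$ to obtain
\begin{equation*}
\|(I-\PiRT)\nabla u\|_{0,\Om}^2 = \|(I-\PiRT)\nabla \PiM u\|_{0,\Om}^2 + 2\!\sum_{K\in\cT_h}\!\big((I-\PiRT)\nabla \PiM u,\, (I-\PiRT)\nabla (I-\PiM)u\big)_{0,K} + \|(I-\PiRT)\nabla (I-\PiM)u\|_{0,\Om}^2.
\end{equation*}
The cross term is exactly what Lemma \ref{Lm:m} controls, giving $\cO(h^4|u|_{4,\Om}^2)$. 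The tail term is handled elementwise by combining the standard estimate $\|(I-\PiRT)\bv\|_{0,K}\lesssim h|\bv|_{1,K}$ with $|(I-\PiM)u|_{2,K}\lesssim h|u|_{3,K}$ from \eqref{interr:m}, which yields $\cO(h^4|u|_{3,\Om}^2)$.

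For the main term, I would apply Lemma \ref{RT2h40} with $w = \PiM u|_K$ on each element, producing the claimed right-hand side but with $\partial_{x_ix_j}\PiM u$ in place of $\partial_{x_ix_j}u$. Since $\PiM u \in P_2(K,\R)$ these second derivatives are constants on $K$, and the degree-of-freedom identity \eqref{hessian:m} gives $\partial_{x_ix_j}\PiM u|_K = \Pi_K^0 \partial_{x_ix_j} u$. For each squared-norm term the Pythagoras identity
\begin{equation*}
\|\Pi_K^0 g\|_{0,K}^2 = \|g\|_{0,K}^2 - \|(I-\Pi_K^0)g\|_{0,K}^2
\end{equation*}
together with $\|(I-\Pi_K^0)g\|_{0,K}\lesssim h|g|_{1,K}\lesssim h|u|_{3,K}$ shows that replacing $\Pi_K^0 g$ by $g$ introduces an error of order $h^2|u|_{3,K}^2$, which after summation and multiplication by the outer factor $h^2$ becomes $\cO(h^4|u|_{3,\Om}^2)$. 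For the mixed $\RTbeta^{12}$ term, the analogous identity $(\Pi_K^0 f, \Pi_K^0 g)_{0,K} = (f,g)_{0,K} - ((I-\Pi_K^0)f,(I-\Pi_K^0)g)_{0,K}$ (which holds since $\Pi_K^0$ is an $L^2$ projection) produces the same order of error. Collecting all contributions yields the stated expansion modulo $\cO(h^4|u|_{4,\Om}^2)$.

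The main conceptual obstacle, the cross term, is already settled by Lemma \ref{Lm:m}, so the remaining work is essentially bookkeeping. The subtlest step is the $\RTbeta^{12}$ mixed term, where a naive Cauchy--Schwarz on a single factor would lose a power of $h$ and yield only $\cO(h^3)$; one must instead use the symmetric orthogonality identity above to gain the extra power of $h$ needed for the optimal $\cO(h^4)$ remainder.
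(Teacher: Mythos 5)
Your proof is correct and follows exactly the route the paper intends: the paper states Lemma \ref{RT2h4} without proof as a consequence of Lemmas \ref{RT2h40} and \ref{Lm:m}, and your write-up (decompose via $\PiM$, kill the cross term with Lemma \ref{Lm:m}, bound the tail with \eqref{interr:m}, apply Lemma \ref{RT2h40} to $\PiM u$ and use $\partial_{x_ix_j}\PiM u|_K=\Pi_K^0\partial_{x_ix_j}u$ together with the symmetric orthogonality identity) is precisely the missing detail. Your observation that the mixed $\RTbeta^{12}$ term needs the two-sided orthogonality of $\Pi_K^0$ rather than a one-sided Cauchy--Schwarz is the right subtlety to flag.
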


\subsection{Refined analysis of $I_{\rm CR}=2((I-\PiRT)\nabla u   , \RTsigS-\nabla_h \CRuPi)-2\lambda(u-\PiCR u,  u)$}\label{sec:rtc} For this term, a direct use of the Cauchy-Schwarz inequality and the Taylor expansions of interpolation errors only leads to a suboptimal expansion since the second term of $I_{\rm CR}$ is essentially a consistency error and  only admits a third order convergence which can not be improved. The idea here is to  explore the relation \eqref{CRRT}
  $$
  2(\RTsigS- \nabla_h \CRuPi)|_K=-\lambda  \Pi_K^0 u (\bold{x}-\bold{M}_K)
  $$
between the RT element and the CR element for any $K\in\mathcal{T}_h$, and decompose the first term of $I_{\rm CR}$ into two terms: one cancels the second term of $I_{\rm CR}$ and the other one has an asymptotic expansion.  To this end, one needs the following crucial superconvergence of the
  inner product of the errors of the  canonical interpolation  of the CR element and the piecewise constant $L^2$ projection.

\begin{lemma}\label{lm:crapp}
If  two adjacent elements $K_1$ and $K_2$ form a parallelogram. For any $w\in P_2(K_1\cup K_2, \R)$ and $v\in P_1(K_1\cup K_2, \R)$, it holds that
\be\label{CRpi0}
(w-\PiCR w, v-\Pi_h^0 v)_{0, K_1\cup K_2}=0.
\ee
Furthermore, under the Assumption \ref{ass:mesh}, for any $u\in H^3(\Om, \R)$,
\begin{equation}\label{app:crid}
\big |(u-\PiCR u, u-\Pi_h^0 u)\big |\lesssim  h^4|u|_{3, \Om}^2.
\end{equation}
\end{lemma}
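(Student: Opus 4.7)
The plan for the algebraic identity \eqref{CRpi0} is to exploit the central symmetry of the parallelogram. Let $c$ denote its center; the point reflection $\Phi(\bx)=2c-\bx$ is affine, exchanges $K_1$ and $K_2$, and sends $\bold{M}_{K_1}$ to $\bold{M}_{K_2}$. Because $\PiCR$ is affine-equivariant, it commutes with composition by $\Phi$. I would decompose $w=w_e+w_o$ into its $\Phi$-symmetric and $\Phi$-antisymmetric parts: since $w\in P_2$, the antisymmetric piece $w_o$ is affine, so $w-\PiCR w=w_e-\PiCR w_e$, and the commuting property together with $w_e\circ\Phi=w_e$ forces this function to be itself $\Phi$-symmetric on $K_1\cup K_2$. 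Meanwhile, for $v\in P_1(K_1\cup K_2)$ one has $(v-\Pi_h^0 v)|_K=(\nabla v)^T(\bx-\bold{M}_K)$, and $\Phi(\bold{M}_{K_1})=\bold{M}_{K_2}$ makes $(v-\Pi_h^0 v)\circ\Phi=-(v-\Pi_h^0 v)$. The integrand in \eqref{CRpi0} is therefore $\Phi$-antisymmetric on $K_1\cup K_2$, and the change of variables $\bx\mapsto\Phi(\bx)$ on $K_2$ cancels the integral over the pair.

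For the global bound \eqref{app:crid}, the plan is to invoke Assumption \ref{ass:mesh} to partition $\Tma$ into parallelogram pairs $\{P_i\}$ and apply the identity pair-by-pair. On each $P_i$, pick $L^2$-projection type approximations $\tilde u_i\in P_2(P_i)$ and $\tilde v_i\in P_1(P_i)$ of $u$; the identity \eqref{CRpi0} yields $(\tilde u_i-\PiCR\tilde u_i,\tilde v_i-\Pi_h^0\tilde v_i)_{P_i}=0$. Writing $u=\tilde u_i+(u-\tilde u_i)$ in the first slot and $u=\tilde v_i+(u-\tilde v_i)$ in the second, subtracting this vanishing term from $(u-\PiCR u, u-\Pi_h^0 u)_{P_i}$ leaves three cross-terms built out of $(I-\PiCR)(u-\tilde u_i)$, $\tilde u_i-\PiCR\tilde u_i$, $(I-\Pi_h^0)(u-\tilde v_i)$ and $\tilde v_i-\Pi_h^0\tilde v_i$.

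Each cross-term is then controlled by Cauchy--Schwarz combined with standard Bramble--Hilbert estimates on $P_i$: $\|(I-\PiCR)(u-\tilde u_i)\|_0\lesssim h^2|u-\tilde u_i|_2\lesssim h^3|u|_{3,P_i}$, $\|\tilde u_i-\PiCR\tilde u_i\|_0\lesssim h^2|u|_{2,P_i}$, $\|(I-\Pi_h^0)(u-\tilde v_i)\|_0\lesssim h\,|u-\tilde v_i|_1\lesssim h^2|u|_{2,P_i}$, and $\|\tilde v_i-\Pi_h^0\tilde v_i\|_0\lesssim h|u|_{1,P_i}$, producing an $\cO(h^4)$ contribution on each $P_i$. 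A global Cauchy--Schwarz over the pairs then collects these into the claimed bound $\lesssim h^4|u|_{3,\Om}^2$, with lower-order semi-norm factors absorbed into $|u|_{3,\Om}^2$ in the paper's convention.

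The main obstacle is the necessity of the identity itself: a direct Cauchy--Schwarz on $(u-\PiCR u,u-\Pi_h^0 u)$ yields only $\cO(h^3)$, because $\PiCR$ preserves only linears and $\Pi_h^0$ only constants. The gain from $h^3$ to $h^4$ is powered entirely by the orthogonality in \eqref{CRpi0}, whose proof hinges on the delicate interplay between the central symmetry of the parallelogram and the antisymmetry of $\bx-\bold{M}_K$ under that symmetry. This is precisely why Assumption \ref{ass:mesh} is indispensable to the analysis.
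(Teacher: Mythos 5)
Your argument is correct in substance but reaches \eqref{CRpi0} by a genuinely different route. The paper computes everything explicitly: it expands $(I-\PiCR)w=-\frac{1}{8}\sum_i|e_i|^2\frac{\partial^2 w}{\partial \bold{t}_i^2}\phi_{\rm CR}^i$ in terms of CR bubble functions, evaluates the inner products $(\phi_{\rm CR}^i,\psi_j-\tfrac13)_{0,K}$ to obtain $(w-\PiCR w,v-\Pi_K^0 v)_{0,K}=-\tfrac{1}{270}\sum_i\frac{\partial^2 w}{\partial \bold{t}_i^2}|e_i|^2|K|\,\nabla v\cdot\overrightarrow{\bM_i\bp_i}$, and then cancels the two elements using $\overrightarrow{\bM_{K_1}\bp_{K_1}}=-\overrightarrow{\bM_{K_2}\bp_{K_2}}$. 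Your point-reflection argument captures exactly this cancellation (the reversal of $\overrightarrow{\bM_i\bp_i}$ is the antisymmetry of $\bx-\bold{M}_K$ under $\Phi$) without any computation: the equivariance of $\PiCR$ under the edge-preserving isometry $\Phi$, the affineness of the $\Phi$-odd part of a quadratic, and the $\Phi$-oddness of $v-\Pi_h^0 v$ are all verified correctly. What you lose is the explicit local formula, which the paper reuses verbatim in the ECR analogue (Lemma 4.2 builds on the displayed expansion \eqref{lm31n1}); what you gain is a shorter, more conceptual proof. For \eqref{app:crid} your two-sided insertion of $\tilde u_i\in P_2$ and $\tilde v_i\in P_1$ is a careful implementation of the bilinear Bramble--Hilbert step that the paper invokes in one line, and your orders check out.

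One omission: Assumption \ref{ass:mesh} does not guarantee that \emph{every} triangle of $\cT_h$ can be paired into a parallelogram --- the paper explicitly splits $\cT_h$ into the paired set $\mathcal{N}_1$ and a residual set $\mathcal{N}_2$ of $\kappa$ boundary triangles, and bounds the latter separately by $\|u-\PiCR u\|_{0,K}\|u-\Pi_h^0 u\|_{0,K}\lesssim h^4|u|_{2,K}|u|_{1,\infty,\Om}$ summed to $\sqrt{\kappa}\,h^4$. You should add this (crude) estimate for the unpaired triangles; without it the partition into pairs $\{P_i\}$ that your global argument rests on need not exist.
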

\begin{proof} In order to derive an expression of the error $w-\PiCR w$, define three basis functions
\begin{equation*}
\phi_{\rm CR}^i=(2\psi_{i-1}-1)(2\psi_{i+1}-1)-\frac{2}{3}\psi_i+\frac{1}{3}, \quad 1\leq i\leq 3,
\end{equation*}
with the barycentric coordinates $\{\psi_i\}_{i=1}^3$ of element $K$. These basis functions satisfy that
\begin{equation*}
\int_{e_j} \phi_{\rm CR}^i \ds=0,\qquad \forall 1\leq i,\ j\leq 3.
\end{equation*}
This implies that these functions are bubble functions of the canonical interpolation operator $\PiCR$ of the CR element. Thus, for any quadratic polynomial $w$,
\be\label{extemp}
(I-\PiCR)w=\sum_{i=1}^3 c_i\phi_{\rm CR}^i.
\ee
To compute the coefficients $c_i$, $i=1, 2, 3$, recall the gradient $\nabla \psi_i=-\frac{\bold{n}_i}{d_i}$ of the barycenter coordinate $\psi_i$ from  \eqref{nlambda}. This gives
\begin{equation*}%\label{lm36n1}
\frac{\partial^2}{\partial \bold{t}_j^2}\phi_{\rm CR}^i=-\frac{8}{|e_i|^2}\delta_{ij}.
\end{equation*}
Thus, taking second order tangential derivatives on both sides of \eqref{extemp} yields
\begin{equation*}
c_i=-{|e_i|^2\over 8|K|}\int_K \frac{\partial^2 w }{\partial \bold{t}_i^2}\dx,\quad 1\leq i\leq 3.
\end{equation*}
This and \eqref{extemp} lead to  an expression of the error $w-\PiCR w$ as follows
\be\label{CRint}
(I-\PiCR)w=-\frac{1}{8}\sum_{i=1}^3|e_i|^2\frac{\partial^2 w}{\partial \bold{t}_i^2}\phi_{\rm CR}^i.
\ee
For any linear polynomial $v$, it holds that
\be\label{Poneint}
(I-\Pi_K^0)v = \sum_{i=1}^3 v(\bp_i)(\psi_i - {1\over 3}), \quad
(\phi_{\rm CR}^i, \psi_j-{1\over 3})_{0, K}=
\left\{
\begin{array}{ll}
{4\over 135} |K|, &\text{if }i=j\\
{-2\over 135} |K|, &\text{if }i\neq j
\end{array}.
\right.
\ee
%Thanks to
%\begin{equation}\label{CRP1base}
%(\psi_{\rm CR}^i, \psi_j-{1\over 3})_K=
%\left\{
%\begin{array}{ll}
%{4\over 45} |K|, &\text{if }i=j\\
%{-2\over 45} |K|, &\text{if }i\neq j
%\end{array},
%\right.
%\end{equation}
A combination of \eqref{CRint} and  \eqref{Poneint} leads to
\begin{equation}\label{lm31n1}
\begin{split}
(w-\PiCR w, v-\Pi_K^0 v)_{0, K}&=-\frac{1}{540}\sum_{i=1}^3\frac{\partial^2 w}{\partial \bold{t}_i^2}|e_i|^2|K|(2v(\bp_i) - v(\bp_{i-1}) - v(\bp_{i+1}))\\
&=-\frac{1}{270}\sum_{i=1}^3\frac{\partial^2 w}{\partial \bold{t}_i^2}|e_i|^2|K|\nabla v\cdot \overrightarrow{\bM_i\bp_i}.
\end{split}
\end{equation}
Suppose $\bp_{K_1}$ and $\bp_{K_2}$ are the vertices of the element $K_1$ and $K_2$, respectively, and the opposite edge $e_{K_1}$ to $\bp_{K_1}$ in $K_1$ is parallel to the opposite edge $e_{K_2}$ to $\bp_{K_2}$ in $K_2$. Let $\bM_{K_1}$ and $\bM_{K_2}$ be the midpoints of $e_{K_1}$ and $e_{K_2}$, respectively.  Since the elements $K_1$ and $K_2$ form a parallelogram,
\begin{equation}\label{lm31n2}
|K_1|=|K_2|, \quad \overrightarrow{\bM_{K_1}\bp_{K_1}}=-\overrightarrow{\bM_{K_2}\bp_{K_2}}.
\end{equation}
Since  $\nabla^2 w$ and $\nabla v$ are constant in $K_1\cup K_2$, a combination of \eqref{lm31n1} and \eqref{lm31n2} yields
\begin{equation*}
(w-\PiCR w, v-\Pi_h^0 v)_{0, K_1\cup K_2}=0,
\end{equation*}
which completes the proof for \eqref{CRpi0}.

The partition $\cT_h$ of domain $\Om$ includes the set of  parallelograms $\mathcal{N}_1$ and the set of a few remaining boundary triangles $\mathcal{N}_2$. Let $\kappa=|\mathcal{N}_2|$ denote the number of the elements in $\mathcal{N}_2$.
\begin{equation}\label{meshtotal}
(u-\PiCR u, u-\Pi_h^0 u)=\sum_{K\in \mathcal{N}_1} (u-\PiCR u, u-\Pi_h^0 u)_{0, K} + \sum_{K\in \mathcal{N}_2} (u-\PiCR u, u-\Pi_h^0 u)_{0, K}.
\end{equation}
A direct application of the Bramble-Hilbert Lemma to \eqref{CRpi0} leads  to
\begin{equation}\label{meshin}
\big |\sum_{K\in \mathcal{N}_1} (u-\PiCR u, u-\Pi_h^0 u)_{0, K}\big |\lesssim  h^4|u|_{3, \Om}^2.
\end{equation}
For any element $K\in\mathcal{N}_2$, it follows from the error estimates of \eqref{CR:est}, \eqref{Pi0est} and the triangle inequality that
\begin{equation*}
\big |(u-\PiCR u, u-\Pi_h^0 u)_{0, K}\big|\lesssim h^4|u|_{2, K}|u|_{1, \infty},
\end{equation*}
consequently,
\begin{equation}\label{meshbd}
\big |\sum_{K\in \mathcal{N}_2} (u-\PiCR u, u-\Pi_h^0 u)_{0, K}\big | \lesssim \sqrt{\kappa} h^4|u|_{3, \Om}^2.
\end{equation}
A substitution of \eqref{meshin} and \eqref{meshbd} into \eqref{meshtotal} leads to
\eqref{app:crid} and   completes the proof.
\end{proof}

The following lemma shows an asymptotic expansion of  $I_{\rm CR}$ in Theorem \ref{Th:extraCR}.
\begin{lemma}\label{lm:RTcross}
Suppose that $u\in H^3(\Om, \R)$. Under the Assumption \ref{ass:mesh},
$$
I_{\rm CR}=-{\lambda^2 H^2 \over 72}   + \cO(h^4|u|^2_{3, \Om}),
$$
\end{lemma}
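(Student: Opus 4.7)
The plan is to exploit the CR--RT relation \eqref{CRRT} to rewrite the first summand of $I_{\rm CR}$ elementwise, and then reduce everything to an inner product involving $u-\PiCR u$ and a single moment--of--inertia constant, so that the two summands in $I_{\rm CR}$ almost cancel up to the target leading term $-\lambda^2 H^2/72$.

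First, using \eqref{CRRT},
$$
2\bigl((I-\PiRT)\nabla u,\ \RTsigS-\nabla_h \CRuPi\bigr)=-\lambda\sum_{K\in\Tma}\Pi_K^0 u\,\bigl((I-\PiRT)\nabla u,\ \bold{x}-\bold{M}_K\bigr)_{0,K},
$$
so the whole task is to expand $((I-\PiRT)\nabla u,\bold{x}-\bold{M}_K)_{0,K}$. For the $\PiRT$ piece, I would use that $\PiRT\nabla u=\bold a+b\bold x$ on $K$ with $2b=\operatorname{div}(\PiRT\nabla u)=\Pi_K^0(\Delta u)=-\lambda\Pi_K^0 u$ (this is the commuting diagram of the Fortin interpolation). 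Combined with $\int_K(\bold{x}-\bold{M}_K)\,dx=0$ and the standard triangle identity $\int_K|\bold{x}-\bold{M}_K|^2\,dx=|K|H_K^2/36$, this gives $(\PiRT\nabla u,\bold{x}-\bold{M}_K)_{0,K}=-\lambda|K|H^2\Pi_K^0 u/72$. For the $\nabla u$ piece, an integration by parts using $\operatorname{div}(\bold x-\bold{M}_K)=2$ and the fact that on each edge $e_i$ the quantity $(\bold{x}-\bold{M}_K)\cdot\bold{n}_i\equiv d_i/3=2|K|/(3|e_i|)$ is constant yields $(\nabla u,\bold{x}-\bold{M}_K)_{0,K}=\tfrac{2|K|}{3}\sum_i\bar u_{e_i}-2|K|\Pi_K^0 u$, where $\bar u_{e_i}$ is the edge mean. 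Since the centroid is the average of the edge midpoints, $\tfrac13\sum_i\bar u_{e_i}=\Pi_K^0\PiCR u$, so this collapses to $-2(u-\PiCR u,1)_{0,K}$. Putting the two pieces together,
$$
\bigl((I-\PiRT)\nabla u,\bold{x}-\bold{M}_K\bigr)_{0,K}
=-2(u-\PiCR u,1)_{0,K}+\frac{\lambda|K|H^2\Pi_K^0 u}{72}.
$$

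Multiplying by $-\lambda\Pi_K^0 u$ and summing over $K$ then gives
$$
2\bigl((I-\PiRT)\nabla u,\ \RTsigS-\nabla_h \CRuPi\bigr)
=2\lambda(u-\PiCR u,\Pi_h^0 u)-\frac{\lambda^2 H^2}{72}\,\|\Pi_h^0 u\|_{0,\Om}^2.
$$
I would finish with two small observations. First, Lemma \ref{lm:crapp} (applied with the pair $u,u$) shows $(u-\PiCR u,u-\Pi_h^0 u)=\cO(h^4|u|_{3,\Om}^2)$, hence $(u-\PiCR u,\Pi_h^0 u)=(u-\PiCR u,u)+\cO(h^4|u|_{3,\Om}^2)$; this is precisely what makes the $2\lambda(u-\PiCR u,\cdot)$ contribution cancel against the $-2\lambda(u-\PiCR u,u)$ in the definition of $I_{\rm CR}$. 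Second, $\|u\|_{0,\Om}=1$ and $\|(I-\Pi_h^0)u\|_{0,\Om}\lesssim h|u|_{1,\Om}$ give $\|\Pi_h^0 u\|_{0,\Om}^2=1+\cO(h^2)$, so $\lambda^2 H^2\|\Pi_h^0 u\|_{0,\Om}^2/72=\lambda^2 H^2/72+\cO(h^4)$. Assembling the two summands of $I_{\rm CR}$ yields the claimed identity $I_{\rm CR}=-\lambda^2 H^2/72+\cO(h^4|u|_{3,\Om}^2)$.

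The main obstacle is the elementwise identity for $((I-\PiRT)\nabla u,\bold{x}-\bold{M}_K)_{0,K}$: one has to recognise that it splits cleanly into a purely geometric inertia term (which produces the $H^2/72$) and an interpolation remainder (which reproduces the consistency error $(u-\PiCR u,u)$ needed to cancel the second summand of $I_{\rm CR}$), and then rely on Lemma \ref{lm:crapp} for the nontrivial fourth-order bound $(u-\PiCR u,u-\Pi_h^0 u)=\cO(h^4)$; without that superconvergence, the direct Cauchy--Schwarz bound would leave an $\cO(h^3)$ term and destroy the expansion.
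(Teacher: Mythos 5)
Your proof is correct and follows essentially the same route as the paper: both use the relation \eqref{CRRT} to reduce $I_{\rm CR}$ to elementwise integrals against $\bold{x}-\bold{M}_K$, integrate by parts on the $\nabla u$ piece to produce the term $-2(u-\PiCR u,\Pi_K^0 u)_{0,K}$ that cancels the consistency error modulo the superconvergence bound \eqref{app:crid} of Lemma \ref{lm:crapp}, and evaluate the $\PiRT\nabla u$ piece exactly via the inertia identity $\int_K|\bold{x}-\bold{M}_K|^2\dx=|K|H^2/36$ to obtain the leading term $-\lambda^2H^2/72$. The only cosmetic differences are that you extract the divergence of $\PiRT\nabla u$ from the commuting diagram rather than from the explicit edge-flux representation, and you integrate by parts on $u$ directly (using the midpoint/edge-average property of $\PiCR$) rather than on $u-\PiCR u$; both variants are valid and lead to the same identities.
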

\begin{proof}
Recall the definition of  $I_{\rm CR}$ in \eqref{crI}
$$
I_{\rm CR}=2((I-\PiRT)\nabla u   , \RTsigS-\nabla_h \CRuPi)-2\lambda(u-\PiCR u,  u).
$$
For the first term on the right-hand side, it follows from the relation \eqref{CRRT} between the CR element and the RT element that
\begin{equation*}
2((I-\PiRT)\nabla u   , \RTsigS-\nabla_h \CRuPi)= -\lambda\sum\limits_{K\in\mathcal{T}_h}\big ((\nabla u, \Pi_K^0 u (\bold{x-M}_K))_{0, K}-(\PiRT\nabla u, \Pi_K^0 u (\bold{x-M}_K))_{0,K}\big ).
\end{equation*}
One key for the analysis is to decompose the first term on  the right--hand side of the above equation. Indeed, since $\nabla_h \PiCR u|_K\in P_0(K,\R^2)$ and $\int_K \bold{x}-\bold{M}_K \dx=0$, it holds that
\begin{equation*}
(\nabla u, \Pi_K^0 u (\bold{x}-\bold{M}_K))_{0,K}=(\nabla_h (u-\PiCR u), \Pi_K^0 u (\bold{x}-\bold{M}_K))_{0,K}.
\end{equation*}
Thus, it follows from $(\bold{x}-\bold{M}_K)\cdot \bold{n}|_e\in P_0(e, \R)$ and an integration by parts that
\begin{equation*}
(\nabla u, \Pi_K^0 u (\bold{x}-\bold{M}_K))_{0,K}= -2(u-\PiCR u, u)+2 (u-\PiCR u,  u-\Pi_K^0u)_{0,K}.
\end{equation*}
Multiplying  both side of the above equation by $\lambda$ , there is a cancellation between the first term on the right--hand side of the above equation and the second term of $I_{\rm CR}$ while the second term on the right--hand side of the above equation can be bounded by the superconvergence analyzed in  \eqref{app:crid}.  This yields
\begin{equation}\label{ICRdef}
I_{\rm CR}= \lambda\sum\limits_{K\in\mathcal{T}_h}(\PiRT\nabla u, \Pi_K^0 u (\bold{x-M}_K))_{0,K}+\cO(h^4|u|_{3, \Om}^2).
\end{equation}
It remains to analyze the first term on  the right hand side of the above equation. For the gradient $\nabla u$, its  interpolation of the RT element  reads
$$
\PiRT\nabla u|_K =  \sum_{i=1}^3 {1\over 2|K|}\int_{e_i} {\partial u\over \partial n} \ds (\bold{x}-\bold{p}_i).
$$
Since $\int_K \bold{x}-\bold{M}_K \dx=0$,
$$
( \bold{x}-\bold{p}_i, \bold{x}-\bold{M}_K)=( \bold{x}-\bold{M}_K, \bold{x}-\bold{M}_K)={H^2|K|\over 36},\qquad \forall\ 1\leq i\leq 3.
$$
This implies that
\begin{equation}\label{lm37n1}
(\PiRT\nabla u , \Pi_K^0 u(\bold{x}-\bold{M}_K))_{0, K}=  { H^2 \over 72} \int_{\partial K}\Pi_K^0u {\partial u\over \partial n} \ds =  { H^2 \over 72} (\Pi_K^0u, \Delta u)_{0, K}.
\end{equation}
By the orthogonal property  of the constant $L^2$ projection operator $\Pi_K^0$,
\begin{equation*}
(\Pi_K^0u, \Delta u )_{0, K}=( u, \Delta u)_{0, K} +  (\Pi_K^0u-u, \Delta u -  \Pi_K^0\Delta u)_{0, K}.
\end{equation*}
Since $H^2=\cO(h^2)$, a combination of this, the error estimate \eqref{Pi0est} for the $L^2$ projection operator $\Pi_K^0$, and \eqref{lm37n1} yields
\begin{equation}\label{rtexp}
\sum\limits_{K\in\mathcal{T}_h}(\PiRT\nabla u , \Pi_K^0 u(\bold{x}-\bold{M}_K))_{0,K}
=\sum_{K\in \cT_h}{ H^2 \over 72}\int_K u \Delta u \dx + \cO(h^4|u|_{3, \Om}^2).
\end{equation}
Under the Assumption \ref{ass:mesh}, the constant $H^2$ is the same on all elements. Since $\Delta u=-\lambda u$ and $\|u\|_{0, \Om}=1$, a substitution  of \eqref{rtexp} into \eqref{ICRdef} leads to
$$
I_{\rm CR}= -{\lambda^2 H^2 \over 72}   + \cO(h^4|u|^2_{3, \Om}),
$$
which completes the proof.
\end{proof}
\subsection{Error estimate of $I_{\rm RT}=2(\nabla u-\PiRT\nabla u, \PiRT\nabla u-\RTsigS)$}\label{sec:rth}
The fact that $\PiRT\nabla u-\RTsigS$ is divergence free and admits the superconvergence \eqref{ECR1} leads to the following optimal analysis of $I_{\rm RT}$.
\begin{lemma}\label{RTh4}
Suppose that $(\RTsigS,\RTuS)$ is the RT solution of the discrete source problem \eqref{RTbdPro} and $u\in H^{\frac{7}{2}}(\Om,\mathbb{R})$. Under the Assumption \ref{ass:mesh},
$$
\big |I_{\rm RT} \big | \lesssim h^4|\ln h||u|_{\frac{7}{2},\Om}^2.
$$
\end{lemma}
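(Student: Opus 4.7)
The plan is to exploit the fact that $\tau_h := \PiRT\nabla u - \RTsigS$ lives in $\mathrm{RT}(\cT_h)\subset H(\mathrm{div},\Om)$ and is piecewise divergence-free, which collapses $I_{\rm RT}$ into (minus twice) the squared superconvergence error bounded by Lemma \ref{Lm:bdRT}. A direct Cauchy--Schwarz would only yield $h^3|\ln h|^{1/2}$, so the crux is identifying this cancellation.

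First I would verify that $\mathrm{div}_h\tau_h = 0$. By the standard RT commuting diagram $\mathrm{div}\,\PiRT = \Pi_h^0\,\mathrm{div}$ applied to $\nabla u$, we have $\mathrm{div}\,\PiRT\nabla u = \Pi_h^0\Delta u = -\lambda\Pi_h^0 u$. On the other hand, the rewritten form of problem \eqref{RTbdPro} noted just after it gives $\mathrm{div}\,\RTsigS = -\lambda\Pi_h^0 u$, so $\mathrm{div}_h\tau_h = 0$. Since $\tau_h\in H(\mathrm{div},\Om)$ (normal components match across interior edges) and $u\in H^1_0(\Om)$, an integration by parts elementwise gives
\begin{equation*}
(\nabla u,\tau_h) \;=\; -(u,\mathrm{div}_h\tau_h) + \int_{\partial\Om} u\,(\tau_h\cdot\nBi)\ds \;=\; 0.
\end{equation*}
Consequently $I_{\rm RT}/2 = (\nabla u - \PiRT\nabla u,\tau_h) = -(\PiRT\nabla u,\tau_h)$.

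Next I would decompose $\PiRT\nabla u = \RTsigS + \tau_h$, giving $(\PiRT\nabla u,\tau_h) = (\RTsigS,\tau_h) + \|\tau_h\|_{0,\Om}^2$. Testing the first equation of \eqref{RTbdPro} against $\tau_h\in\mathrm{RT}(\cT_h)$ yields $(\RTsigS,\tau_h) + (\RTuS,\mathrm{div}\,\tau_h) = 0$, and since $\mathrm{div}\,\tau_h = 0$ we conclude $(\RTsigS,\tau_h)=0$. Therefore
\begin{equation*}
I_{\rm RT} \;=\; -2\|\PiRT\nabla u - \RTsigS\|_{0,\Om}^2 .
\end{equation*}

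Finally, I would invoke the superconvergence estimate \eqref{ECR1} of Lemma \ref{Lm:bdRT}, together with the Sobolev embedding $|u|_{2,\infty,\Om}\lesssim |u|_{7/2,\Om}$ valid in two dimensions (since $7/2-2>1$), to bound
\begin{equation*}
|I_{\rm RT}| \;\leq\; 2\|\PiRT\nabla u - \RTsigS\|_{0,\Om}^2 \;\lesssim\; h^4\bigl(|u|_{7/2,\Om}^2 + \kappa^2|\ln h|\,|u|_{2,\infty,\Om}^2\bigr) \;\lesssim\; h^4|\ln h|\,|u|_{7/2,\Om}^2,
\end{equation*}
which is the desired estimate. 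The only real obstacle is the recognition that $\tau_h$ is divergence-free; once this is noted, the rest is bookkeeping with the mixed system and Lemma \ref{Lm:bdRT}.
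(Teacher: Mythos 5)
Your proof is correct and follows essentially the same route as the paper: both arguments rest on showing $\mathrm{div}\,\tau_h=0$, using the $H(\mathrm{div})$-conformity of $\tau_h$ together with $u|_{\partial\Om}=0$ to kill $(\nabla u,\tau_h)$, testing the first equation of \eqref{RTbdPro} to kill $(\RTsigS,\tau_h)$, and then invoking the superconvergence bound \eqref{ECR1}. Your version is in fact slightly tidier, since you justify $\mathrm{div}\,\tau_h=0$ explicitly via the commuting diagram and arrive at the clean identity $I_{\rm RT}=-2\|\PiRT\nabla u-\RTsigS\|_{0,\Om}^2$ before estimating, whereas the paper absorbs the $\|\tau_h\|_{0,\Om}^2$ term into the error at the outset.
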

\begin{proof}
With $\tau_h= \PiRT\nabla u - \RTsigS$, the superconvergence result of the RT element  \eqref{ECR1} leads to
$$
I_{\rm RT}  = (\nabla u - \RTsigS, \tau_h) + \|\tau_h\|_{0, \Om}^2=  (\nabla u, \tau_h) - (\RTsigS, \tau_h) + \cO(h^4|\ln h||u|_{\frac{7}{2},\Om}^2).
$$
It follows from the integration by parts and Problem \eqref{RTbdPro} that
$$
I_{\rm RT}  = \sum_{K\in\cT_h} (-\int_K u{\rm div} \tau_h \dx + \int_{\partial K} u \tau_h\cdot \bold{n} \ds) + (\RTuS, {\rm div} \tau_h) + \cO(h^4|\ln h||u|_{\frac{7}{2},\Om}^2).
$$
Since ${\rm div} \tau_h=0$, $\tau_h$ is H(div)-conforming, and $u$ vanishes on the boundary $\partial\Omega$, this completes the proof.
\end{proof}

\subsection{Error estimate  of $I_{\rm CR}^1=2(\nabla u -\PiRT \nabla u , \nabla_h \CRuPi-\nabla_h \CRu)$}\label{sec:cr1}
A direct combination of Cauchy-Schwarz inequality and the superclose property \eqref{CRESdiff} of the CR element only yields  a suboptimal analysis of $I_{\rm CR}^1$.  The idea here is to further make use of  the relation \eqref{CRRT} between the CR element and the RT element
 and decompose it into three terms: a vanishing term, a fourth order term, and a remaining term. By using the commuting property of $\PiCR$,  the discrete eigenvalue problem \eqref{discrete}  and the discrete source problem \eqref{CRbdPipro}, and fully  exploring the properties of the projection operator $\Pi_h^0$ and the uniformity  of the mesh, this remaining term can be in some sense transferred to a  consistency error with  respect to the  nonconforming function $\PiCR u - \CRuPi$. This, in fact, leads to the following superconvergence result.

 %By rewriting the interpolation of the RT element in terms of the gradient of a   CR solution, the term $I_{\rm CR}^1$ in Theorem \ref{Th:extraCR} is actually a consistency error term of the CR element. An optimal convergence rate of this consistency error term $I_{\rm CR}^1$ is analyzed in the following lemma. The key here is to expand the interpolation error of the aforementioned CR solution in terms of its derivatives and to make use of the integration by parts and the weak continuity of the CR solution.
\begin{lemma}\label{SupPih0}
Suppose Assumption \ref{ass:mesh} holds.  Let $s_h=\PiCR u - \CRuPi$, it holds that
  \begin{equation}
  \big|((I - \Pi_h^0) s_h,   (I - \Pi_h^0) \CRu)\big|\lesssim h^4|u|_{2, \Omega}^2,
  \end{equation}
  with $(\CRlam, \CRu)$ an eigenpair of  \eqref{discrete} by the CR element and $\CRuPi$ the solution of  \eqref{CRbdPipro}.
\end{lemma}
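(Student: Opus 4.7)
The plan is to simplify via $L^2$-orthogonality, reduce the inner product to one involving the exact eigenfunction $u$ in place of $\CRu$, and then exploit the splitting $s_h = -(u - \PiCR u) + (u - \CRuSt) + (\CRuSt - \CRuPi)$ so that Lemma \ref{lm:crapp} handles the ``interpolation'' piece, $L^2$-orthogonality with Lemma \ref{supereig1} handles the ``discrete'' piece, and a uniform-mesh superconvergence of the CR source error handles the remaining piece. To start, since $\Pi_h^0\bigl((I - \Pi_h^0)s_h\bigr) = 0$, we have $((I - \Pi_h^0) s_h, (I - \Pi_h^0) \CRu) = (s_h, (I - \Pi_h^0) \CRu)$. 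Combining \eqref{CR:est} with Lemma \ref{supereig1} yields $\parallel s_h\parallel_{0,\Om} \lesssim h^2 |u|_{2,\Om}$, while \eqref{CR:est} also gives $\parallel \CRu - u\parallel_{0,\Om} \lesssim h^2 |u|_{2,\Om}$; a Cauchy--Schwarz estimate then shows $|(s_h, (I - \Pi_h^0)(\CRu - u))| \lesssim h^4 |u|_{2,\Om}^2$, reducing the problem to bounding $(s_h, u - \Pi_h^0 u)$.

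Split $s_h = -(u - \PiCR u) + (u - \CRuSt) + (\CRuSt - \CRuPi)$ with $\CRuSt$ defined by \eqref{CRbdpro}. The piece $-(u - \PiCR u, u - \Pi_h^0 u)$ is directly controlled by Lemma \ref{lm:crapp}. The piece $(\CRuSt - \CRuPi, u - \Pi_h^0 u)$ is rewritten via $L^2$-orthogonality as $\bigl((I - \Pi_h^0)(\CRuSt - \CRuPi), (I - \Pi_h^0) u\bigr)$; the standard estimate $\parallel(I - \Pi_h^0) v\parallel_{0,\Om} \lesssim h\parallel\nabla_h v\parallel_{0,\Om}$ together with the $H^1$-supercloseness $\parallel\nabla_h(\CRuSt - \CRuPi)\parallel_{0,\Om} \lesssim h^2 |u|_{1,\Om}$ from the proof of Lemma \ref{supereig1} closes this piece at $O(h^4)$.

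For the middle piece $(u - \CRuSt, u - \Pi_h^0 u)$, further split $u - \CRuSt = (u - \PiCR u) + \delta_h$ with $\delta_h := \PiCR u - \CRuSt \in \VCR$, so Lemma \ref{lm:crapp} again handles the first term. The commuting property \eqref{commuting} and the definition \eqref{CRbdpro} of $\CRuSt$ show that $\delta_h$ satisfies
$$(\nabla_h \delta_h, \nabla_h v_h) = \sum_{e \in \cE_h} \int_e \frac{\partial u}{\partial \bold{n}_e}\,[v_h]\,\ds \qquad \text{for all } v_h \in \VCR,$$
i.e., $\delta_h$ equilibrates exactly the CR consistency error of $u$. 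Under Assumption \ref{ass:mesh} the parallelogram cancellation familiar from the proof of Lemma \ref{lm:crapp} upgrades the right-hand side to $\lesssim h^2 |u|_{3,\Om}\,\parallel\nabla_h v_h\parallel_{0,\Om}$, so that $\parallel\nabla_h \delta_h\parallel_{0,\Om} \lesssim h^2 |u|_{3,\Om}$. Rewriting $(\delta_h, u - \Pi_h^0 u) = \bigl((I - \Pi_h^0)\delta_h, (I - \Pi_h^0) u\bigr)$ and applying the same $L^2$-estimate for $(I - \Pi_h^0)$ then closes this final piece.

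The hard part is the uniform-mesh energy-norm supercloseness $\parallel\nabla_h \delta_h\parallel_{0,\Om} \lesssim h^2 |u|_{3,\Om}$ for $\delta_h = \PiCR u - \CRuSt$: it rests on the elementwise pairing supplied by Assumption \ref{ass:mesh} and on careful tracking of the parallelogram-symmetric structure of the CR consistency error, exactly the ingredient already underlying the proof of Lemma \ref{lm:crapp}.
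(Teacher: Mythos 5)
Your proof hinges on the claim that $\parallel\nabla_h \delta_h\parallel_{0,\Om}\lesssim h^2|u|_{3,\Om}$ for $\delta_h=\PiCR u-\CRuSt$, i.e.\ the energy-norm supercloseness of the canonical CR interpolant to the CR source solution. This is precisely the property that the paper states (in the abstract and in Section \ref{sec:CR}) the CR element \emph{lacks}, and it is the central obstruction the whole RT-based machinery of the paper is built to circumvent; you cannot assume it. Your identity $(\nabla_h\delta_h,\nabla_h v_h)=\sum_e\int_e\frac{\partial u}{\partial \bold{n}_e}[v_h]\,ds$ is correct, but the asserted upgrade of this consistency functional to $\cO(h^2)\parallel\nabla_h v_h\parallel_{0,\Om}$ on uniform meshes is not justified and is not what Lemma \ref{lm:crapp} provides. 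Writing $[v_h]|_e=[\partial_{\bt_e}v_h]\,(s-s_{\rm mid})$ and expanding $\partial u/\partial \bold{n}_e$, the functional behaves like $\sum_e |e|^3\,\partial_{\bt_e}\partial_{\bold{n}_e}u(\bM_e)\,[\partial_{\bt_e}v_h]$; for a general $v_h\in\VCR$ the tangential-derivative jumps carry no smallness and no sign structure pairing across parallelograms, so the dual norm is genuinely only $\cO(h)$. Lemma \ref{lm:crapp} exploits cancellation in the specific pairing of the interpolation error $(I-\PiCR)w$ with $(I-\Pi_h^0)v$, not in the consistency functional tested against arbitrary $v_h$. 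Note also that your three-way splitting is illusory: the two $(u-\PiCR u,\,u-\Pi_h^0u)$ contributions enter with opposite signs and cancel exactly, so your argument reduces to $(s_h,u-\Pi_h^0u)=(\delta_h,u-\Pi_h^0u)+(\CRuSt-\CRuPi,u-\Pi_h^0u)$ and stands or falls entirely on the unproven supercloseness. (A secondary point: even if it held, your route yields $|u|_{3,\Om}^2$ rather than the stated $|u|_{2,\Om}^2$.)

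The paper's proof avoids this trap by a different mechanism. Since $s_h$ and $\CRu$ are piecewise linear, it expands $(I-\Pi_h^0)s_h|_K=\sum_i\partial_{x_i}s_h\,(x_i-M_i)$ and likewise for $\CRu$, so that
\begin{equation*}
((I-\Pi_h^0)s_h,(I-\Pi_h^0)\CRu)=\sum_{i,j=1}^2 a_{ij}\sum_{K\in\cT_h}\int_K \partial_{x_i}s_h\,\partial_{x_j}\CRu\dx ,
\end{equation*}
where the moment $a_{ij}=\frac{1}{|K|}\int_K(x_i-M_i)(x_j-M_j)\dx$ is a single $\cO(h^2)$ constant common to all elements by Assumption \ref{ass:mesh}. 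It then suffices to show the remaining sum is $\cO(h^2)$: after replacing $\CRu$ by $u$, integration by parts turns it into a jump term controlled via $\int_e[s_h]\ds=0$ together with $\parallel\nabla_h s_h\parallel_{0,\Om}\lesssim h|u|_{2,\Om}$, plus a volume term controlled by the $L^2$ bound $\parallel s_h\parallel_{0,\Om}\lesssim h^2|u|_{2,\Om}$. Only first-order energy accuracy and second-order $L^2$ accuracy of $s_h$ are needed, which do hold. You should restructure your argument along these lines rather than through an energy-norm superclose estimate.
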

 \begin{proof}
 The direct use of the error estimate of the piecewise constant $L^2$ projection and the usual  Cauchy-Schwarz inequality   can only derive a second order convergence for this inner product.
 The idea herein is to fully  explore the properties of the projection operator $\Pi_h^0$ and the uniformity  of the mesh and transfer it to  a  consistency error. Indeed, given $K\in\mathcal{T}_h$, since $s_h,\ \CRu|_K\in P_1(K, \R)$, there exist the following expansions
$$
(I - \Pi_h^0) s_h|_K = \sum_{i=1}^2 {\partial s_h\over \partial x_i} (x_i - M_i), \quad  (I - \Pi_h^0) \CRu|_K = \sum_{i=1}^2 {\partial \CRu\over \partial x_i} (x_i - M_i),
$$
with $\bold{M}_K=(M_1, M_2)$ the centroid of element $K$. Let $a_{ij}= {1\over |K|}\int_K(x_i-M_i)(x_j-M_j)\dx$. Thus,
\begin{equation*}
((I - \Pi_h^0) s_h,   (I - \Pi_h^0) \CRu)_{0, K}=\sum_{i,j=1}^2  a_{ij}\int_K{\partial s_h\over \partial x_i} {\partial \CRu\over \partial x_j}\dx.
\end{equation*}
Since the mesh is uniform,  the constant $a_{ij}$ is equal for all the elements of the mesh and is of  $\cO(h^2)$. Together with the error estimate \eqref{CR:est} of the CR element, this gives
\begin{equation*}
\big |a_{ij}\sum_{K\in\cT_h}\int_K{\partial s_h\over \partial x_i}{\partial (\CRu - u)\over \partial x_j}\dx\big |\lesssim h^4|u|_{2, \Om}^2,
\end{equation*}
where the following error estimate is employed
\begin{equation}\label{lm39new}
\|s_h\|_{0, \Om} + h\|\nabla_h s_h\|_{0, \Om}\lesssim h^2|u|_{2, \Om}.
\end{equation}
Therefore,
\begin{equation*}
((I - \Pi_h^0) s_h,   (I - \Pi_h^0) \CRu)=\sum_{i,j=1}^2  a_{ij}\sum_{K\in\cT_h}\int_K{\partial s_h\over \partial x_i} {\partial  u\over \partial x_j}\dx +\cO(h^4|u|_{2, \Om}^2).
\end{equation*}
Here the term $\sum_{K\in\cT_h}\int_K{\partial s_h\over \partial x_i} {\partial  u\over \partial x_j}\dx$ is essentially a consistency error. In fact, an integration by parts gives
\begin{equation}\label{I1CR:1}
\sum_{K\in\cT_h}\int_K{\partial s_h\over \partial x_i} {\partial  u\over \partial x_j}\dx=-\sum_{e\in\cE^i}\int_e [s_h]  {\partial  u\over \partial x_j}n_i \ds +(s_h,  {\partial^2  u\over \partial x_i\partial x_j}).
\end{equation}
Since $s_h\in \VCR$ and $\int_e [s_h]\ds=0$ for any $e\in\cE^i$, the first term on the right--hand side of the above equation can be estimated as
\begin{equation*}
\big |\sum_{e\in\cE^i}\int_e [s_h]  {\partial  u\over \partial x_j}n_i \ds\big |= \big |\sum_{e\in\cE^i}\int_e [(I-\Pi_e^0)s_h]  (I-\Pi_e^0){\partial  u\over \partial x_j}n_i \ds\big |\lesssim h^2 |u|_{2, \Omega}^2.
\end{equation*}
By the error estimate of \eqref{lm39new}, the second term on the right--hand side of \eqref{I1CR:1} can be bounded as
$$
(s_h,  {\partial^2  u\over \partial x_i\partial x_j})\lesssim h^2|u|_{2, \Omega}^2.
$$
A summary of these estimates  completes the proof.
\end{proof}

\begin{lemma}\label{CRa1}
Let $(\CRlam, \CRu)$ be an eigenpair of  \eqref{discrete} by the CR element and $\CRuPi$ be the solution of  \eqref{CRbdPipro}. Suppose $u\in H^{\frac{7}{2}}(\Om,\mathbb{R})$. Under the Assumption \ref{ass:mesh},
$$
\big |I_{\rm CR}^1\big | \lesssim h^4|\ln h|^{1\over 2}|u|_{\frac{7}{2},\Om}^2.
$$
\end{lemma}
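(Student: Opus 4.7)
The plan is to split $I_{\rm CR}^1 = 2(\nabla u, \nabla_h \CRuPi - \nabla_h \CRu) - 2(\PiRT\nabla u, \nabla_h \CRuPi - \nabla_h \CRu)$ and evaluate each piece using the commuting property \eqref{commuting} of $\PiCR$, the discrete problems \eqref{discrete} and \eqref{CRbdPipro}, and the relation \eqref{CRRT} between the RT and CR elements. For the first piece, the commuting property replaces $\nabla u$ by $\nabla_h \PiCR u$; testing \eqref{CRbdPipro} and \eqref{discrete} against $v_h = \PiCR u$ then yields
\begin{equation*}
(\nabla u, \nabla_h(\CRuPi - \CRu)) = \lambda(\Pi_h^0 u, \PiCR u) - \CRlam(\CRu, \PiCR u).
\end{equation*}

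For the second piece I would use \eqref{CRRT} to write $\PiRT\nabla u = \nabla_h \CRuPi - \frac{\lambda}{2}\Pi_h^0 u(\bold{x} - \bold{M}_K) + (\PiRT\nabla u - \RTsigS)$. Three subterms arise: the $\nabla_h \CRuPi$ part contributes $\lambda(\Pi_h^0 u, \CRuPi - \CRu)$ via \eqref{CRbdPipro} tested against $v_h = \CRuPi - \CRu$; the $\frac{\lambda}{2}\Pi_h^0 u(\bold{x} - \bold{M}_K)$ part vanishes identically since $\nabla_h(\CRuPi - \CRu)|_K \in P_0(K,\mathbb{R}^2)$ and $\int_K (\bold{x} - \bold{M}_K)\dx = 0$; and the $\PiRT\nabla u - \RTsigS$ part is controlled by Cauchy--Schwarz together with the RT superconvergence \eqref{ECR1} and the superclose property \eqref{CRESdiff}, giving an $\cO(h^4|\ln h|^{1/2}|u|_{7/2, \Om}^2)$ remainder that accounts for the logarithmic factor in the final bound. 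Setting $s_h := \PiCR u - \CRuPi$ and combining both pieces produces
\begin{equation*}
I_{\rm CR}^1 = 2\lambda(\Pi_h^0 u, s_h) + 2\lambda(\Pi_h^0 u, \CRu) - 2\CRlam(\CRu, \PiCR u) + \cO(h^4 |\ln h|^{1/2} |u|_{7/2, \Om}^2).
\end{equation*}

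The critical cancellation comes from testing \eqref{CRbdPipro} with $v_h = \CRu$ and \eqref{discrete} with $v_h = \CRuPi$: both sides evaluate $(\nabla_h \CRuPi, \nabla_h \CRu)$, so $\lambda(\Pi_h^0 u, \CRu) = \CRlam(\CRu, \CRuPi)$. This turns $2\lambda(\Pi_h^0 u, \CRu) - 2\CRlam(\CRu, \PiCR u)$ into $-2\CRlam(\CRu, s_h)$ and yields
\begin{equation*}
I_{\rm CR}^1 = 2\lambda(\Pi_h^0 u - \CRu, s_h) + 2(\lambda - \CRlam)(\CRu, s_h) + \cO(h^4 |\ln h|^{1/2} |u|_{7/2, \Om}^2).
\end{equation*}
The second term is $\cO(h^4|u|_{2, \Om}^2)$ from $|\lambda - \CRlam| \lesssim h^2$ and $\|s_h\|_{0, \Om} \lesssim h^2|u|_{2, \Om}$. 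For the first, I decompose $\Pi_h^0 u - \CRu = \Pi_h^0(u - \CRu) - (I - \Pi_h^0)\CRu$; the $\Pi_h^0(u - \CRu)$ contribution is bounded directly by the $L^2$ estimate in \eqref{CR:est}, while the self-adjointness of $\Pi_h^0$ reduces $((I - \Pi_h^0)\CRu, s_h)$ to $((I - \Pi_h^0)\CRu, (I - \Pi_h^0)s_h)$, exactly the quantity controlled by $h^4|u|_{2, \Om}^2$ in Lemma \ref{SupPih0}.

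The principal obstacle is this final step. A direct Cauchy--Schwarz on $((I - \Pi_h^0)\CRu, s_h)$ only yields $\cO(h^3)$, since $\|(I - \Pi_h^0)\CRu\|_{0, \Om}$ is merely $\cO(h)$. Lemma \ref{SupPih0} overcomes this suboptimality by exploiting the uniformity of the mesh and the $P_1$ character of both $\CRu|_K$ and $s_h|_K$ to rewrite the inner product as a consistency-type expression to which the zero-jump condition on $\VCR$ functions can be applied; without this superconvergence, the expansion in Theorem \ref{Th:extraCR} could not be driven to fourth-order accuracy.
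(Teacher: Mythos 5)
Your proposal is correct and follows essentially the same route as the paper: both use the relation \eqref{CRRT} to isolate a vanishing $(\bold{x}-\bold{M}_K)$ term and a term bounded by the RT superconvergence \eqref{ECR1} together with the superclose property \eqref{CRESdiff}, and both reduce the remaining piece, via the commuting property and the discrete problems \eqref{discrete} and \eqref{CRbdPipro}, to the inner product $((I-\Pi_h^0)s_h,(I-\Pi_h^0)\CRu)$ controlled by Lemma \ref{SupPih0}. The only difference is bookkeeping: the paper introduces $s_h$ at once by writing $(\nabla_h(u-\CRuPi),\cdot)=(\nabla_h s_h,\cdot)$ and converting to $(s_h,\lambda\Pi_h^0 u-\CRlam\CRu)$, whereas you test the two discrete problems separately and recover $s_h$ through the cancellation $\lambda(\Pi_h^0 u,\CRu)=\CRlam(\CRu,\CRuPi)$ — the resulting expressions agree up to an $\cO(h^5)$ discrepancy from swapping $\lambda$ and $\CRlam$ in front of the leading term.
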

\begin{proof}By the relation \eqref{CRRT} between the CR element and the RT element,  there is the following decomposition:
\begin{equation}\label{lm39n0}
\begin{split}
I_{\rm CR}^1=&\ 2(\nabla u - \nabla_h \CRuPi , \nabla_h \CRuPi-\nabla_h \CRu) \\
&+ \lambda\sum\limits_{K\in\mathcal{T}_h} (\Pi_K^0 u (\bold{x}-\bold{M}_K) , \nabla_h \CRuPi-\nabla_h \CRu)_{0, K}\\
&+2(\RTsigS -\PiRT \nabla u, \nabla_h \CRuPi-\nabla_h \CRu) .
\end{split}
\end{equation}
Since $\int_K \bold{x-M}_K \dx=0$ and  $\nabla_h \CRuPi-\nabla_h \CRu|_K\in P_0(K, \R^2)$ for any element $K\in\mathcal{T}_h$, the second term on the right--hand side of the above equation  vanishes, namely,
\begin{equation*}
 \lambda\sum\limits_{K\in\mathcal{T}_h} (\Pi_K^0 u (\bold{x}-\bold{M}_K) , \nabla_h \CRuPi-\nabla_h \CRu)_{0, K}=0.
\end{equation*}
The third  term on the right-hand side of \eqref{lm39n0} can by bounded by the superconvergence result of the RT element in Lemma \ref{Lm:bdRT} and the superclose property \eqref{CRESdiff} of the CR element, which reads
\begin{equation*}
\big |(\RTsigS -\PiRT \nabla u, \nabla_h \CRuPi-\nabla_h \CRu)\big |\lesssim h^4|\ln h|^{1\over 2}|u|_{{7\over 2}, \Om}^2.
\end{equation*}
It  remains to analyze the first term on the right--hand side of \eqref{lm39n0}.
To this end, let $s_h=\PiCR u - \CRuPi$. Then, the commuting property of $\PiCR$ gives
$$
(\nabla_h (u - \CRuPi), \nabla_h \CRuPi-\nabla_h \CRu)=(\nabla_h s_h, \nabla_h \CRuPi-\nabla_h \CRu)
$$
It follows from the discrete eigenvalue problem \eqref{discrete}  and the discrete source problem \eqref{CRbdPipro} that
\begin{equation}\label{lm39n1}
(\nabla_h s_h, \nabla_h \CRuPi-\nabla_h \CRu)=(s_h, \lambda \Pi_h^0 u-\CRlam \CRu).
\end{equation}
The error estimate \eqref{CR:est} of the CR element implies
\begin{equation}\label{lm39n2}
\|\PiCR u - \CRuPi\|_{0, \Om} + \|\Pi_h^0 (u-\CRu)\|_{0, \Om} + |\lambda-\CRlam| \lesssim h^2|u|_{2, \Om}.
\end{equation}
Since
$$
\lambda \Pi_h^0 u-\CRlam \CRu = \lambda \Pi_h^0 (u-\CRu) + (\lambda-\CRlam)\Pi_h^0 \CRu +\CRlam (\Pi_h^0-I)\CRu,
$$
a combination of \eqref{lm39new}, \eqref{lm39n1} and \eqref{lm39n2} leads to
$$
(\nabla_h s_h, \nabla_h \CRuPi-\nabla_h \CRu)= \CRlam (s_h, (\Pi_h^0 -I) \CRu)+\cO(h^4|u|_{2, \Om}^2).
$$
It follows from \eqref{lm39n0}, the orthogonal property of  the piecewise constant $L^2$ projection operator $\Pi_h^0$ and the above equation that
\begin{equation*}
I_{\rm CR}^1=-2 \CRlam ((I - \Pi_h^0) s_h,   (I - \Pi_h^0) \CRu) +\cO(h^4|\ln h|^{1\over 2}|u|_{{7\over 2}, \Om}^2).
\end{equation*}
%By the estimate \eqref{CR:est} of the CR element,
%$$
%|s_h|_{1, \Om}\lesssim |\PiCR u-u|_{1, \Om} + |u-\CRuPi|_{1,\Om}\lesssim h |u|_{2, \Om}.
%$$
A substitution of Lemma \ref{SupPih0} into the above identity completes the proof.
\end{proof}

\subsection{Error estimate of $I_{\rm CR}^2=2(\PiRT\nabla u -\RTsigS, \RTsigS- \nabla_h \CRu)$}\label{sec:cr2}
The superconvergence of the RT element and the relation between the RT element and the CR element only lead to a suboptimal analysis of $I_{\rm CR}^2$.
%The first-order term $\|\RTsigS- \nabla_h \CRu\|_{0, \Om}$  leads to the difficulty in analyzing a fourth-order accuracy for $I_{\rm CR}^2$.
%Since $\|\RTsigS- \nabla_h \CRu\|_{0, \Om}$ is a $\cO(h)$ term, a suboptimal convergence rate 3 of $I_{\rm CR}^2$ follows from the triangle inequality.
%Note that the ingredient $\bold{x}-\bold{M}_K$ in $\RTsigS- \nabla_h \CRu$ is the gradient of a piecewise quadratic function with weak continuity on interior edges.
The key idea for an optimal analysis of  $I_{\rm CR}^2$ is %to explore the weak continuity of such a piecewise function, and
to exploit the $H({\rm{div}})$-conformity and the divergence-free  property of $\PiRT\nabla u -\RTsigS$ in $I_{\rm CR}^2$.

\begin{lemma}\label{CRa2}
Suppose that $(\RTsigS,\RTuS)$ is the solution of source problem \eqref{RTbdPro}, $(\CRlam, \CRu)$ is the corresponding eigenpair  of \eqref{discrete} by the CR element and $u\in H^{\frac{7}{2}}(\Om,\mathbb{R})$. Under the Assumption \ref{ass:mesh},
$$
|I_{\rm CR}^2|\lesssim h^4|\ln h|^{1\over 2} |u|_{\frac{7}{2},\Om}^2.
$$
\end{lemma}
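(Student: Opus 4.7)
The plan is to exploit the decomposition
\[
\RTsigS - \nabla_h \CRu = (\RTsigS - \nabla_h \CRuPi) + (\nabla_h \CRuPi - \nabla_h \CRu),
\]
which splits $I_{\rm CR}^2$ into a ``Marini piece'' $2(\PiRT\nabla u - \RTsigS, \RTsigS - \nabla_h \CRuPi)$ and a ``CR-superclose piece'' $2(\PiRT\nabla u - \RTsigS, \nabla_h \CRuPi - \nabla_h \CRu)$, and to treat them by completely different mechanisms.

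The CR-superclose piece is the routine part: a straight application of Cauchy--Schwarz combined with the RT superconvergence \eqref{ECR1} from Lemma \ref{Lm:bdRT} and the superclose bound \eqref{CRESdiff} from Lemma \ref{supereig1} yields a bound of order $h^4|\ln h|^{1/2}|u|_{\frac{7}{2},\Om}^2$ at once.

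The heart of the argument is to show that the Marini piece vanishes identically. Substituting the identity \eqref{CRRT},
\[
\RTsigS - \nabla_h \CRuPi\big|_K = -\tfrac{\lambda\,\Pi_K^0 u}{2}(\bold{x}-\bold{M}_K),
\]
reduces it to a sum over elements of $\Pi_K^0 u \int_K \tau\cdot(\bold{x}-\bold{M}_K)\dx$, where $\tau := \PiRT\nabla u - \RTsigS$. The key observation is that $\tau$ is divergence-free on every $K$: the Fortin property \eqref{def:fortin} gives $\mathrm{div}(\PiRT \nabla u)=\Pi_h^0 \Delta u$, while the second equation of \eqref{RTbdPro} together with $-\Delta u=\lambda u$ yields $\mathrm{div}\RTsigS=-\lambda\Pi_h^0 u=\Pi_h^0\Delta u$. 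Since $\tau|_K\in \mathrm{RT}(K,\mathbb{R}^2)$ is divergence-free, it must reduce to a constant vector $\bold{a}_K$, and the centroid identity $\int_K(\bold{x}-\bold{M}_K)\dx=0$ forces $\int_K \tau\cdot(\bold{x}-\bold{M}_K)\dx=\bold{a}_K\cdot\mathbf{0}=0$. Every per-element integral therefore vanishes, and with it the whole Marini piece.

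The main obstacle is precisely what motivates this detour. A naive Cauchy--Schwarz on the entirety of $I_{\rm CR}^2$ yields only $\mathcal{O}(h^3)$, because although $\|\PiRT\nabla u-\RTsigS\|_{0,\Om}$ is $\mathcal{O}(h^2|\ln h|^{1/2})$ by Lemma \ref{Lm:bdRT}, $\|\RTsigS-\nabla_h\CRu\|_{0,\Om}$ inherits the $\mathcal{O}(h)$ contribution $\|\RTsigS-\nabla_h\CRuPi\|_{0,\Om}$ coming from \eqref{CRRT}. The Marini identity together with the divergence-free structure of $\tau$ is exactly what kills this dominant non-superconvergent part of $\RTsigS-\nabla_h\CRu$, leaving only the genuinely fourth-order superclose contribution bounded above.
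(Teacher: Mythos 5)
Your proposal is correct, and for the decisive term it takes a genuinely different --- and in fact sharper --- route than the paper. The decomposition is the same one the paper uses (via \eqref{CRRT}, splitting $I_{\rm CR}^2$ into $2(\PiRT\nabla u -\RTsigS,\nabla_h \CRuPi- \nabla_h \CRu)$ plus the ``Marini piece''), and your treatment of the first piece by Cauchy--Schwarz with \eqref{ECR1} and \eqref{CRESdiff} is exactly the paper's estimate \eqref{RThigh}. The difference is in the Marini piece $-\lambda\sum_K \Pi_K^0 u\int_K \tau_h\cdot(\bold{x}-\bold{M}_K)\dx$ with $\tau_h=\PiRT\nabla u-\RTsigS$: the paper writes $\bold{x}-\bold{M}_K=\nabla\phi_K$, integrates by parts using ${\rm div}\,\tau_h=0$ and the $H({\rm div})$-conformity of $\tau_h$, and then bounds the resulting edge jumps $\sum_e\int_e(\tau_h\cdot\bold{n}_e)[\Pi_K^0u\,\phi_K]\ds$ by trace inequalities together with the superconvergence of $\tau_h$, obtaining $\cO(h^4|\ln h|^{1/2}|u|_{7/2,\Om}^2)$. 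You instead push the divergence-free observation one step further: since $\tau_h|_K\in{\rm RT}(K,\R^2)$ has the form $\bold{a}+c\bold{x}$ and ${\rm div}(\bold{a}+c\bold{x})=2c$, divergence-freeness forces $\tau_h|_K$ to be a constant vector, so $\int_K\tau_h\cdot(\bold{x}-\bold{M}_K)\dx=0$ exactly and the Marini piece vanishes identically. This is correct (your verification that ${\rm div}(\PiRT\nabla u)=\Pi_h^0\Delta u={\rm div}\,\RTsigS$ via the Fortin commuting property and the second equation of \eqref{RTbdPro} is sound), and it buys a purely local, exact cancellation that needs no trace inequality, no mesh uniformity, and no superconvergence for this term; the paper's edge-based argument is strictly weaker here, though of course still sufficient for the stated $\cO(h^4|\ln h|^{1/2})$ bound.
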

\begin{proof}
Thanks to the relation \eqref{CRRT} between the CR element and the RT element, the term $I_{\rm CR}^2$ can be decomposed into the following two terms
\begin{equation}\label{CR2deco}
I_{\rm CR}^2=2(\PiRT\nabla u -\RTsigS,\nabla_h \CRuPi- \nabla_h \CRu)
- \lambda  (\PiRT\nabla u -\RTsigS, \Pi_h^0 u(\bold{x}-\bold{M}_K)).
\end{equation}
According to the superconvergence result in Lemma \ref{Lm:bdRT} and the superclose property  \eqref{CRESdiff} of the CR element,
\begin{equation}\label{RThigh}
\big |(\PiRT\nabla u -\RTsigS,\nabla_h \CRuPi- \nabla_h \CRu)\big |\lesssim h^4|\ln h|^{1\over 2} |u |_{\frac{7}{2},\Om}^2.
\end{equation}
To bound the second term on the right--hand side of \eqref{CR2deco}, let $\tau_h= \PiRT\nabla u -\RTsigS$ and $\phi_K ={1\over 2}((x_1-M_1)^2 + (x_2-M_2)^2)$ with the centroid $\bold{M}_K=(M_1, M_2)$.
Since $\bold{x}-\bold{M}_K=\nabla \phi_K$ and ${\rm div} \tau_h=0$, it follows from the integration by parts and the continuity of $\tau_h\cdot \bold{n}$ that
\begin{equation*}\label{RTelement}
(\PiRT\nabla u -\RTsigS, \Pi_h^0 u(\bold{x}-\bold{M}_K))=  \sum_{e\in \cE^i} \int_e \tau_h\cdot \bold{n}_e[\Pi_K^0u\phi_K]\ds.
\end{equation*}
Since $\phi_K=\cO(h^2)$, by the Cauchy-Schwarz inequality and the trace inequality,
$$
\big | \int_e (\tau_h\cdot \bold{n}_e) [\Pi_K^0u \phi_{K}]  \ds\big |\lesssim h^2\| \tau_h\cdot \bold{n}_e\|_{0, e}\| [\Pi_h^0 u]\|_{0, e}\lesssim h^2\|\tau_h\|_{0,\omega_e}|u|_{1,\omega_e}).
$$
Consequently, thanks to the superconvergence result in Lemma \ref{Lm:bdRT},
\begin{equation}\label{cr2i}
| (\PiRT\nabla u -\RTsigS, \Pi_h^0 u(\bold{x}-\bold{M}_K))| \lesssim h^2\|\tau_h\|_{0,\Omega}\|u\|_{1,\Omega}\lesssim h^4|\ln h|^{1\over 2} |u|_{\frac{7}{2},\Om}^2.
\end{equation}
A substitution of \eqref{RThigh} and \eqref{cr2i} into \eqref{CR2deco} yields
$$
\big |I_{\rm CR}^2 \big |\lesssim h^4|\ln h|^{1\over 2} |u|_{\frac{7}{2},\Om}^2,
$$
which completes the proof.
\end{proof}

The following asymptotic expansions of eigenvalues by the CR element come from the combination of Lemmas \ref{RT2h4}, \ref{lm:RTcross},  \ref{RTh4},   \ref{CRa1},  \ref{CRa2} and Theorem \ref{Th:extraCR}.
\begin{Th}\label{CR:extra}
Suppose that $(\lambda , u )$ is the eigenpair of \eqref{variance} with $u \in H^4(\Om,\mathbb{R})\cap H^1_0(\Om,\mathbb{R})$, and $(\CRlam, \CRu)$ is the corresponding  eigenpair of \eqref{discrete} by the CR element on an uniform triangulation $\cT_h$. It holds that
\begin{equation*}
\begin{split}
\lambda-\CRlam=& h^2\big (\frac{\RTbeta^{11}}{4}\parallel\partial_{x_1x_1}u-\partial_{x_2x_2}u\parallel_{0,\Om}^2 +  \RTbeta^{22}\parallel\partial_{x_1x_2}u\parallel_{0,\Om}^2 + \RTbeta^{12}\int_{\Om}(\partial_{x_1x_1}u-\partial_{x_2x_2}u)\partial_{x_1x_2}u\dx\big ) \\
&- \frac{\lambda^2  }{144}  H^2 +\cO(h^4|\ln h||u|_{4,\Om}^2).
\end{split}
\end{equation*}
\end{Th}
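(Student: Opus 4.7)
The plan is to obtain Theorem \ref{CR:extra} by a direct substitution of the five preceding results into the master identity of Theorem \ref{Th:extraCR}. Starting from
$$\lambda-\CRlam = \parallel (I-\PiRT)\nabla u\parallel_{0,\Om}^2 + \frac{\lambda^2 H^2}{144} + I_{\rm CR} + I_{\rm RT} + I_{\rm CR}^1 + I_{\rm CR}^2 + \cO(h^4|\ln h||u|_{{7\over 2},\Om}^2),$$
I would first replace $\parallel (I-\PiRT)\nabla u\parallel_{0,\Om}^2$ by its Taylor-type expansion from Lemma \ref{RT2h4}, which yields precisely the $h^2$-scaled quadratic functional of $\partial_{x_1x_1}u$, $\partial_{x_2x_2}u$ and $\partial_{x_1x_2}u$ appearing in the statement, with an $\cO(h^4|u|_{4,\Om}^2)$ tail. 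Next, I would replace $I_{\rm CR}$ by the expression $-\lambda^2 H^2/72 + \cO(h^4|u|_{3,\Om}^2)$ supplied by Lemma \ref{lm:RTcross}, and absorb $I_{\rm RT}$, $I_{\rm CR}^1$, $I_{\rm CR}^2$ into the final $\cO$-term using Lemmas \ref{RTh4}, \ref{CRa1}, \ref{CRa2}, each of which bounds the respective quantity by $h^4|\ln h||u|_{{7\over 2},\Om}^2$.

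The only genuine arithmetic point is the reconciliation of the two $H^2$ contributions. The $+\lambda^2 H^2/144$ term originates from the exact evaluation of $\parallel \RTsigS - \nabla_h \CRuPi\parallel_{0,\Om}^2$ via the RT--CR relation \eqref{CRRT}, while $I_{\rm CR}$ produces $-\lambda^2 H^2/72 = -2\lambda^2 H^2/144$. Together they collapse to the net coefficient $-\lambda^2 H^2/144$ displayed in the theorem. Finally, using the continuous embeddings $H^4(\Om) \hookrightarrow H^{7/2}(\Om) \hookrightarrow H^3(\Om)$, together with $\|u\|_{0,\Om}=1$ so that lower-order norms are controlled by $|u|_{4,\Om}$ up to a harmless constant, all leftover remainders merge into a single $\cO(h^4|\ln h||u|_{4,\Om}^2)$ term, matching the statement.

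No substantive obstacle remains at this stage: the entire analytic content has already been distributed across Lemmas \ref{RT2h4}, \ref{lm:RTcross}, \ref{RTh4}, \ref{CRa1}, \ref{CRa2} and Theorem \ref{Th:extraCR}. The only points demanding care are the sign/denominator bookkeeping in the $H^2$ cancellation and the verification that every auxiliary bound involves a norm of $u$ dominated by $|u|_{4,\Om}$, so that the hypothesis $u \in H^4(\Om)\cap H^1_0(\Om)$ is sufficient to cover all the regularity requirements invoked in the five input lemmas.
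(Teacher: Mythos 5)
Your proposal is correct and follows exactly the route the paper takes: Theorem \ref{CR:extra} is obtained by substituting Lemmas \ref{RT2h4}, \ref{lm:RTcross}, \ref{RTh4}, \ref{CRa1} and \ref{CRa2} into the identity of Theorem \ref{Th:extraCR}, and your bookkeeping of the cancellation $\frac{\lambda^2H^2}{144}-\frac{\lambda^2H^2}{72}=-\frac{\lambda^2H^2}{144}$ as well as the absorption of all remainders into $\cO(h^4|\ln h||u|_{4,\Om}^2)$ matches what the paper intends. Nothing further is needed.
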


\subsection{Extrapolation eigenvalues}
Denote the approximate eigenvalues of the CR element on $\cT_h$ by $\CRlam^h$.
Suppose that eigenvalues of the CR element converge at a rate $\alpha$ with a fixed coefficient $C$, namely
\begin{equation}\label{exid}
\lambda-\CRlam^{h}=Ch^{\alpha} + \cO(h^{\beta}) \text{ with }\beta>\alpha.
\end{equation}
If the convergence rate $\alpha$ is known, define extrapolation eigenvalues
\begin{equation}\label{exmethod}
\lambda_{\rm CR, 1}^{\rm EXP}={2^{\alpha} \CRlam^{2h}-\CRlam^{h}\over 2^{\alpha}-1}.
\end{equation}
It is easy to verify that extrapolation eigenvalues
$
\lambda_{\rm CR, 1}^{\rm EXP}
$
in \eqref{exmethod} converge to eigenvalues at a higher rate $ \beta$. If eigenfunctions are smooth enough, say $u\in H^{4}(\Omega, \R)$, Theorem \ref{CR:extra}  indicates that on uniform traingulations,
$$
\big |\lambda - \lambda_{\rm CR, 1}^{\rm EXP}\big |\lesssim h^4|\ln h| |u |_{4, \Omega}^2.
$$
The extrapolation eigenvalues in \eqref{exmethod} converge at a higher rate 4.

If eigenfunctions are singular, the convergence rate $\alpha$ in \eqref{exid}  is unknown. Suppose that the higher order term $\cO(h^{\beta})$ is zero,
$$
\CRlam^{4h}-\lambda=4^{\alpha}C h^{\alpha}, \quad \CRlam^{2h}-\lambda=2^{\alpha}C h^{\alpha}, \quad \CRlam^{h}-\lambda= Ch^{\alpha}.
$$
Then,
$$
{\frac{\CRlam^{4h}-\CRlam^{2h}}{\CRlam^{2h}-\CRlam^{h}}=2^{\alpha}}.
$$
A substitution of the above relation into \eqref{exmethod} gives new extrapolation eigenvalues
\begin{equation}\label{exmethod2}
\lambda_{\rm CR, 2}^{\rm EXP}= \frac{\left(\CRlam^{4h}-\CRlam^{2h}\right) \CRlam^{h}- \left(\CRlam^{2h}-\CRlam^{h}\right) \CRlam^{2h}}{\CRlam^{4h}+\CRlam^{h}-2 \CRlam^{2h}}
\end{equation}
for  unknown convergence rate $\alpha$ in \eqref{exid}.

\section{Asymptotic expansions of eigenvalues by the ECR element}

Let $\ECRuSt \in \VECR$ be the solution of the following discrete source problem
\be\label{ECRbdpro2}
(\nabla \ECRuSt ,\nabla_h v_h)=(\lambda u ,v_h)\quad\text{ for any } v_h\in \VECR.
\ee
The equivalence between the ECR element and the RT element \cite{Hu2015The} is crucial for expansions of eigenvalues by the ECR element.
Thanks to \eqref{ECRRT} in Lemma \ref{lm:equiv}, a similar proof to the one in Lemma \ref{supereig1} leads to
\begin{equation}\label{ECR3}
\parallel \RTsigS - \nabla_h \ECRu\parallel_{0,\Om}\lesssim h^{2}|u|_{2,\Om},
\end{equation}
provided that $u\in  H^2(\Om, \mathbb{R})\cap H^1_0(\Om, \mathbb{R})$.

\begin{Lm}\label{Th:extraECR}
Suppose that $(\lambda, u)$ is the eigenpair of \eqref{variance} with $u \in H^{\frac{7}{2}}(\Om,\mathbb{R})\cap H^1_0(\Om,\mathbb{R})$, and $(\ECRlam, \ECRu)$ is the corresponding eigenpair of \eqref{discrete} by the ECR element. Under the Assumption \ref{ass:mesh},
\begin{equation}\label{ECRlambdaexpand}
\lambda-\ECRlam=%\big (
\parallel \nabla u-\PiRT\nabla u \parallel_{0,\Om}^2-2\lambda (u -\PiECR u , u-\Pi_h^0 u ) +I_{\rm ECR}
+ \cO(h^4|\ln h||u|_{\frac{7}{2},\Om}^2).
\end{equation}
with \begin{equation*}\label{Iecr}
I_{\rm ECR}=2(\nabla u-\PiRT\nabla u, \RTsigS-\nabla_h \ECRu).
\end{equation*}
\end{Lm}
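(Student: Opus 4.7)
The plan is to mirror the proof of Theorem \ref{Th:extraCR}, but to exploit the sharper relation \eqref{ECRRT}, namely $\RTsigS=\nabla_h\ECRuS$, which makes the analogue of the $\parallel\RTsigS-\nabla_h\CRuPi\parallel_{0,\Om}^2$ term vanish. As in the CR case, the starting point is the analogue of \eqref{commutId}: using the commuting property \eqref{commuting} of $\PiECR$ in the error identity for $(\ECRlam,\ECRu)$, the consistency term $a_h(u,\ECRu)-\ECRlam(u,\ECRu)$ collapses to $-\ECRlam(u-\PiECR u,\ECRu)$, yielding
\[
\lambda-\ECRlam=\|\nabla_h(u-\ECRu)\|_{0,\Om}^2-2\ECRlam(u-\PiECR u,\ECRu)-\ECRlam\|u-\ECRu\|_{0,\Om}^2.
\]

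First, I would dispose of the last term using \eqref{ECR:est} as $\mathcal{O}(h^4|u|_{2,\Om}^2)$, and then decompose the middle term exactly as in \eqref{CRexp2}--\eqref{CRlambdaexpand}:
\[
\ECRlam(u-\PiECR u,\ECRu)=\lambda(u-\PiECR u,u)+\mathcal{O}(h^4|u|_{2,\Om}^2).
\]
The key ECR-specific observation is that the interpolation condition $\int_K\PiECR v\,dx=\int_K v\,dx$ in \eqref{ecrinterpolation} implies the orthogonality $(w,u-\PiECR u)=0$ for every $w\in U_{\rm RT}$; in particular $(\Pi_h^0 u,u-\PiECR u)=0$. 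Thus
\[
\lambda(u-\PiECR u,u)=\lambda(u-\PiECR u,u-\Pi_h^0 u),
\]
which produces the desired second term of the stated expansion.

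Next, I would split
\[
\nabla u-\nabla_h\ECRu=(\nabla u-\PiRT\nabla u)+(\PiRT\nabla u-\RTsigS)+(\RTsigS-\nabla_h\ECRu)
\]
and expand the square. The diagonal piece $\parallel\nabla u-\PiRT\nabla u\parallel_{0,\Om}^2$ gives the leading term in the statement; the other two diagonal pieces are $\mathcal{O}(h^4|\ln h||u|_{7/2,\Om}^2)$ by Lemma \ref{Lm:bdRT} and $\mathcal{O}(h^4|u|_{2,\Om}^2)$ by \eqref{ECR3}, respectively. Among the three cross terms, $2(\nabla u-\PiRT\nabla u,\PiRT\nabla u-\RTsigS)$ is exactly $I_{\rm RT}$ from \eqref{crI} and is $\mathcal{O}(h^4|\ln h||u|_{7/2,\Om}^2)$ by Lemma \ref{RTh4} (whose proof only uses that $\PiRT\nabla u-\RTsigS$ is divergence free and $H(\mathrm{div})$-conforming, independent of the CR/ECR distinction); $2(\PiRT\nabla u-\RTsigS,\RTsigS-\nabla_h\ECRu)$ is $\mathcal{O}(h^4|\ln h|^{1/2}|u|_{7/2,\Om}^2)$ by Cauchy--Schwarz together with Lemma \ref{Lm:bdRT} and \eqref{ECR3}; and the remaining cross term is precisely $I_{\rm ECR}$. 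Collecting everything yields the stated identity.

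No serious obstacle is expected: unlike the CR analysis, here $\RTsigS-\nabla_h\ECRuS=0$ by \eqref{ECRRT}, so neither the fourth-order $H^2/144$ contribution nor a counterpart of $I_{\rm CR}$ appears, and no refined cancellation arguments like Lemma \ref{lm:crapp} or Lemma \ref{SupPih0} are needed. The only place one must be attentive is in justifying that $(u-\PiECR u,u)=(u-\PiECR u,u-\Pi_h^0 u)$ so that the second term on the right-hand side of \eqref{ECRlambdaexpand} takes the form stated (rather than the less useful $(u-\PiECR u,u)$), and in verifying that the two cited bounds \eqref{ECR1} and \eqref{ECR3} combine without logarithmic loss beyond the $|\ln h|$ factor already allowed in the remainder.
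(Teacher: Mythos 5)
Your proposal is correct and follows essentially the same route as the paper: the same commuting-property identity, the same three-term splitting of $\nabla u-\nabla_h\ECRu$, the same use of Lemma \ref{Lm:bdRT}, \eqref{ECR3} and Lemma \ref{RTh4} for the remainder terms, and the same observation that the moment condition in \eqref{ecrinterpolation} gives $(u-\PiECR u,u)=(u-\PiECR u,u-\Pi_h^0 u)$. No substantive differences from the paper's argument.
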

\begin{proof}
A similar analysis to that for \eqref{CRlambdaexpand} in Theorem \ref{Th:extraCR} leads to
\begin{equation}\label{ECRexpF}
\lambda-\ECRlam=\parallel \nabla_h (u-\ECRu)\parallel_{0,\Om}^2-2\lambda(u-\PiECR u, u) + \cO(h^4|u|_{2,\Om}^2).
\end{equation}
With the interpolation $\PiRT\nabla u $ and the solution $\RTsigS $ of the discrete source problem \eqref{RTbdPro}, the first term on the right-hand side of \eqref{ECRexpF}  can be decomposed as
\begin{equation}\label{ECRtotal1}
\begin{split}
\parallel \nabla u-\nabla_h \ECRu\parallel_{0,\Om}^2=&\parallel \nabla u-\PiRT\nabla u \parallel_{0,\Om}^2 + \parallel \PiRT\nabla u-\RTsigS \parallel_{0,\Om}^2 + \parallel \RTsigS- \nabla_h \ECRu \parallel_{0,\Om}^2\\
&+2(\PiRT\nabla u-\RTsigS,\RTsigS- \nabla_h \ECRu)+I_{\rm RT}+ I_{\rm ECR},
\end{split}
\end{equation}
with $I_{\rm RT}=2(\nabla u-\PiRT\nabla u, \PiRT\nabla u-\RTsigS)$ defined in \eqref{crI}.
Since $\RTsigS$ is the RT element approximation of $\nabla u$, a combination of the superconvergence \eqref{ECR1} of the RT element, \eqref{ECR3} and the triangle inequality  bounds the
fourth term on the right--hand side of \eqref{ECRtotal1} as follows
\begin{equation*}
2\big |(\PiRT\nabla u-\RTsigS,\RTsigS- \nabla_h \ECRu)\big |\lesssim h^{4}|\ln h|^{1/2}|u|_{\frac{7}{2},\Om}|u|_{2,\Om}.
\end{equation*}
Since the second term on the right--hand side of \eqref{ECRtotal1} is anlyzed in the superconvergence result \eqref{ECR1} of the RT element and the fifth term $I_{\rm RT}$  is estimated in Lemma \ref{RTh4}, this and  \eqref{ECR3} give
\begin{equation*}
\parallel \nabla u-\nabla_h \ECRu\parallel_{0,\Om}^2=\parallel \nabla u-\PiRT\nabla u \parallel_{0,\Om}^2 +I_{\rm ECR} +  \cO(h^{4}|\ln h| |u|_{\frac{7}{2},\Om}^2).
\end{equation*}
By the definition of the interpolation $\PiECR$ in \eqref{ecrinterpolation},
\begin{equation*}
(u-\PiECR u, u)=(u-\PiECR u,u-\Pi_h^0 u).
\end{equation*}
A substitution of these two equations into \eqref{ECRexpF}  leads to
\begin{equation*}\label{ECRlambdaexpand}
\lambda-\ECRlam=%\big (
\parallel \nabla u-\PiRT\nabla u \parallel_{0,\Om}^2-2\lambda (u -\PiECR u , u-\Pi_h^0 u ) +I_{\rm ECR}
+ \cO(h^4|\ln h||u|_{\frac{7}{2},\Om}^2),
\end{equation*}
which completes the proof.
\end{proof}

For the ECR element, the following lemma analyzes a similar result to that in Lemma \ref{lm:crapp} for the CR element, namely, the superconvergence of the inner product of the canonical interpolations of the ECR element and the piecewise constant $L^2$ projection.
\begin{lemma}\label{lm:ecrapp}
If  two adjacent elements $K_1$ and $K_2$ form a parallelogram. For any $w\in P_2(K_1\cup K_2, \R)$ and $v\in P_1(K_1\cup K_2, \R)$ that
\be\label{ECRpi0}
(w-\PiECR w, v-\Pi_h^0 v)_{0, K_1\cup K_2}=0.
\ee
Furthermore, under the Assumption \ref{ass:mesh},
$$
\big |((I-\PiECR) u, (I-\Pi_h^0)u) \big |\lesssim h^4 |u|^2_{3, \Om},
$$
provided that $u\in H^3(\Om, \R)$.
\end{lemma}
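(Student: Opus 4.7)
The plan is to mirror the two-step strategy of Lemma \ref{lm:crapp}: a pointwise cancellation on each parallelogram pair yielding \eqref{ECRpi0}, followed by a Bramble-Hilbert closure. However, because the ECR bubble space inside $P_2(K,\R)$ is only two-dimensional and does not admit as clean a tangential-derivative basis as $\{\phi_{\rm CR}^i\}$, I would avoid writing $(I-\PiECR)w$ in closed form and instead exploit the central symmetry of the parallelogram.

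Let $\bold{c}$ be the center of $K_1\cup K_2$ and let $\sigma(\bold{x})=2\bold{c}-\bold{x}$ be the point reflection through $\bold{c}$. Since any two triangles forming a parallelogram are congruent via this reflection, $\sigma$ swaps $K_1$ and $K_2$ and interchanges the centroids $\bold{M}_{K_1}$ and $\bold{M}_{K_2}$. Because $\sigma$ is an isometry, $|\sigma(\bold{x})|^2$ equals $|\bold{x}|^2$ plus a linear polynomial, which gives $\mathrm{ECR}(K_1)\circ\sigma=\mathrm{ECR}(K_2)$; the change-of-variable invariance of edge and volume averages under $\sigma$ then yields the equivariance $(\PiECR w)\circ\sigma=\PiECR(w\circ\sigma)$ on $K_1$. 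For linear $v$ the identities $\Pi_{K_1}^0 v+\Pi_{K_2}^0 v=2v(\bold{c})$ and $v\circ\sigma=2v(\bold{c})-v$ combine to give $(v-\Pi_h^0 v)\circ\sigma=-(v-\Pi_h^0 v)$ on $K_1$. Changing variables $\bold{x}=\sigma(\bold{y})$ in the $K_2$-piece of the inner product and adding the $K_1$-piece collapses everything to $((I-\PiECR)(w-w\circ\sigma),\,v-\Pi_h^0 v)_{0,K_1}$; since $w\in P_2$, its antisymmetric part $w-w\circ\sigma$ around $\bold{c}$ is linear, hence killed by $I-\PiECR$, proving \eqref{ECRpi0}.

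For the global estimate, I would split $\cT_h$ into the set $\mathcal{N}_1$ of parallelogram pairs and the $\kappa$ remaining boundary triangles $\mathcal{N}_2$. On each pair $D=K_1\cup K_2\in\mathcal{N}_1$, pick $Q_D\in P_2(D)$ and $L_D\in P_1(D)$ (for instance averaged Taylor polynomials of $u$ at $\bold{c}$) satisfying the usual Bramble-Hilbert bounds $|u-Q_D|_{m,D}\lesssim h^{3-m}|u|_{3,D}$ and $|u-L_D|_{m,D}\lesssim h^{2-m}|u|_{2,D}$, and decompose $((I-\PiECR)u,(I-\Pi_h^0)u)_D$ as
\[
((I-\PiECR)(u-Q_D),(I-\Pi_h^0)u)_D+((I-\PiECR)Q_D,(I-\Pi_h^0)(u-L_D))_D+((I-\PiECR)Q_D,(I-\Pi_h^0)L_D)_D.
\]
The last term vanishes by \eqref{ECRpi0}; the $P_1$-preservation of $\PiECR$ together with \eqref{ECR:est} and \eqref{Pi0est} bounds the first term by $h^4|u|_{3,D}|u|_{1,D}$ and the second by $h^4|u|_{2,D}^2$. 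Summing over $\mathcal{N}_1$ and applying Cauchy-Schwarz produces $\lesssim h^4|u|_{3,\Om}^2$. On each $K\in\mathcal{N}_2$, \eqref{ECR:est}, \eqref{Pi0est} and the embedding $H^3\hookrightarrow W^{1,\infty}$ furnish the per-element bound $\lesssim h^4|u|_{2,K}\|u\|_{1,\infty}$, contributing at most $\sqrt{\kappa}\,h^4|u|_{3,\Om}^2$ to the total, exactly as in Lemma \ref{lm:crapp}.

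The main obstacle is the equivariance $(\PiECR w)\circ\sigma=\PiECR(w\circ\sigma)$. Unlike $P_1$, which is invariant under arbitrary affine maps, $\mathrm{ECR}(K)=P_1(K)+\mathrm{span}\{x_1^2+x_2^2\}$ depends on the coordinate origin and is only rigid-motion invariant, so the argument genuinely hinges on the specific $180^\circ$ rotational symmetry of the parallelogram rather than an arbitrary affine identification of $K_1$ with $K_2$. Once this is settled, the remainder is bookkeeping parallel to Lemma \ref{lm:crapp}.
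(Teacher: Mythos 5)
Your proposal is correct, and the proof of the key identity \eqref{ECRpi0} takes a genuinely different route from the paper's. The paper proceeds computationally: it expresses $\PiCR w-\PiECR w$ through the bubble $\phi_{\rm ECR}$ via the relation $\PiECR\phi_{\rm CR}^i=\tfrac19\phi_{\rm ECR}$, reuses the explicit inner-product formula \eqref{lm31n1} from Lemma \ref{lm:crapp}, and obtains the cancellation on a parallelogram pair from the vector identity $\overrightarrow{\bM_{K_1}\bp_{K_1}}=-\overrightarrow{\bM_{K_2}\bp_{K_2}}$, so its argument piggybacks on the CR lemma. You instead use the point reflection $\sigma$ through the center of the parallelogram; your checks are all sound: $\mathrm{ECR}(K)\circ\sigma=\mathrm{ECR}(K)$ because $|\sigma(\bold{x})|^2-|\bold{x}|^2$ is affine, the degrees of freedom (edge and volume averages) are preserved by the isometry so unisolvence gives the equivariance $(\PiECR w)\circ\sigma=\PiECR(w\circ\sigma)$, the identity $\bold{M}_{K_1}+\bold{M}_{K_2}=2\bold{c}$ gives the antisymmetry of $v-\Pi_h^0 v$, and the symmetric (quadratic) part of $w$ about $\bold{c}$ drops out because $w-w\circ\sigma$ is affine and hence annihilated by $I-\PiECR$. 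Your argument is self-contained (it does not need Lemma \ref{lm:crapp} at all, and in fact reproves \eqref{CRpi0} by the same token), isolates the central symmetry of the parallelogram as the real mechanism, and avoids all bubble-function bookkeeping; what it gives up is the explicit formula for $(\PiCR w-\PiECR w,v-\Pi_K^0 v)_{0,K}$, which the paper later reuses in the proof of Lemma \ref{ECRa1}. Your Bramble--Hilbert closure over parallelogram pairs plus the $\sqrt{\kappa}$ boundary-element count matches the paper's treatment.
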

\begin{proof} The main idea is to derive a refined  expansion of the error $\PiCR w -\PiECR w$ in terms of the second order tangential derivatives $\frac{\partial^2 w}{\partial \bold{t}_j^2 }$, where $\bold{t}_j$ are  the tangential vectors of the three edges $e_j$,  $j=1, 2, 3$,  of element $K$. To this end,  define $\phi_{\rm ECR}\in \VECR$ by
\begin{equation}\label{consDef}
\phi_{\rm ECR}(\bold{x})=2 - \frac{36}{H^2}\sum_{i=1}^2(x_i-M_i)^2
\end{equation}
with the centroid $\bold{M}_K=(M_1,M_2)$ and $H^2=\sum_{i=1}^3 |e_i|^2 $. It is easy to verify that
\begin{equation}\label{ECR3basis}
\int_{e_i}\phi_{\rm ECR} \ds=0,\ \forall 1\leq i\leq 3\quad \text{ and }\quad
\frac{1}{|K|}\int_K \phi_{\rm ECR} \dx=1.
\end{equation}
Note that the quadratic function $\phi_{\rm CR}^i$ from Lemma \ref{lm:crapp} satisfies
$
{1\over |K|}\int_K \phi_{\rm CR}^i \dx={1\over 9},
$
and the integral average of $\phi_{\rm CR}^i$ on any edge is zero. Thus,
$$
\PiECR\phi_{\rm CR}^i = {1\over 9}\phi_{\rm ECR}.
$$
It follows from the expansion of the interpolation error of the CR element in \eqref{extemp} that
$
w=\PiCR w -\sum_{i=1}^3{|e_i|^2\over 8|K|}\int_K \frac{\partial^2 w }{\partial \bold{t}_i^2}\dx\phi_{\rm CR}^i.
$
Thus,
\begin{equation*}
\PiECR w=\PiCR w - {1\over 9}\phi_{\rm ECR}\sum_{i=1}^3{|e_i|^2\over 8|K|}\int_K \frac{\partial^2 w }{\partial \bold{t}_i^2}\dx .
\end{equation*}
By the property of $\phi_{\rm ECR}$ in  \eqref{ECR3basis}, $\phi_{\rm ECR}=(I-\PiCR) \phi_{\rm ECR}$.  %Note that $\PiCR(\PiECR u)=\PiCR u$.
This and the above identity yield
\begin{equation*}
\PiCR w -\PiECR w = {1\over 9}(I-\PiCR) \phi_{\rm ECR}\sum_{i=1}^3{|e_i|^2\over 8|K|}\int_K \frac{\partial^2 w }{\partial \bold{t}_i^2}\dx.
\end{equation*}
By the  expansion of the  inner product of the errors of the  canonical interpolation  of the CR element and the piecewise constant $L^2$ projection from  \eqref{lm31n1}, this leads to,
\begin{equation*}
\begin{split}
(\PiCR w -\PiECR w, v-\Pi_K^0 v)_{0, K}&={1\over 9}((I-\PiCR) \phi_{\rm ECR}, v-\Pi_K^0 v)_{0, K}\sum_{i=1}^3{|e_i|^2\over 8|K|}\int_K \frac{\partial^2 w }{\partial \bold{t}_i^2}\dx \\
&=-\frac{1}{2430}\sum_{j=1}^3\frac{\partial^2 \phi_{\rm ECR}}{\partial \bold{t}_j^2}|e_j|^2|K|\nabla v\cdot \overrightarrow{\bM_j\bp_j}\sum_{i=1}^3{|e_i|^2\over 8|K|}\int_K \frac{\partial^2 w }{\partial \bold{t}_i^2}\dx.
\end{split}
\end{equation*}
A direct calculation derives $
\frac{\partial^2 \phi_{\rm ECR}}{\partial \bold{t}_j^2}=-{72\over H^2},\ \forall 1\leq j\leq 3.
$
Thus,
\begin{equation*}
\begin{split}
(\PiCR w -\PiECR w, v-\Pi_h^0 v)_{0, K}&=\frac{4|K|}{135H^2}\big (\sum_{i=1}^3{|e_i|^2\over 8|K|}\int_K \frac{\partial^2 w }{\partial \bold{t}_i^2}\dx\big )\sum_{j=1}^3|e_j|^2\nabla v\cdot \overrightarrow{\bM_j\bp_j}.
\end{split}
\end{equation*}
For any $w\in P_2(K_1\cup K_2, \R)$ and $v\in P_1(K_1\cup K_2, \R)$,
Assumption \ref{ass:mesh} implies that the term $\frac{|K|}{H^2}\big (\sum_{i=1}^3{|e_i|^2\over 8|K|}\int_K \frac{\partial^2 w }{\partial \bold{t}_i^2}\dx\big )$ has the same value on elements $K_1$ and $K_2$, thus,
\begin{equation}\label{ecrpi0diff}
(\PiCR w -\PiECR w, v-\Pi_h^0 v)_{0, K_1\cup K_2}=0.
\end{equation}
A combination of Lemma \ref{lm:crapp} for the CR element and \eqref{ecrpi0diff} leads to
$$
(w -\PiECR w, v-\Pi_h^0 v)_{0, K_1\cup K_2}=0,
$$
which completes the proof for \eqref{ECRpi0}.

It follows from a similar proof for Lemma \ref{lm:crapp} and \eqref{ECRpi0} that
$$
\big |((I-\PiECR) u, u-\Pi_h^0 u) \big |\lesssim h^4 |u|^2_{3, \Om} ,
$$
which completes the proof.
\end{proof}
It remains to analyze the term $I_{\rm ECR}$. Thanks to  the superconvergence result \eqref{ECR1} for $\PiRT\nabla u-\sigma_{\rm RT}^{\lambda u}$,  and the equivalence \eqref{ECRRT} between the ECR element and the RT element, namely
$\sigma_{\rm RT}^{\lambda u}= \nabla_h \ECRuS $,
$$
I_{\rm ECR}=2(\nabla u-\PiRT\nabla u, \RTsigS-\nabla_h \ECRu)=2(\nabla u - \nabla_h \ECRuS , \nabla_h \ECRuS-\nabla_h \ECRu)   +\cO(h^4|\ln h|^{1\over 2}|u|_{\frac{7}{2}, \Om}^2).
$$
A similar proof to that in Lemma \ref{CRa1} for a similar term $I_{\rm CR}^1$ of the CR element leads to
\begin{equation}\label{lm42n1}
I_{\rm ECR}=-2((I - \Pi_h^0)s_h, \ECRlam (I - \Pi_h^0) \ECRu) +\cO(h^4|\ln h|^{1\over 2}|u|_{\frac{7}{2}, \Om}^2),
\end{equation}
with $s_h=\PiECR u - \ECRuS $.

\begin{lemma}\label{ECRa1}
Let $(\ECRlam, \ECRu)$ be the eigenpair of  \eqref{discrete} by the ECR element and $\ECRuS$ be the solution of  \eqref{ECRbdpro}. Suppose $u\in H^{\frac{7}{2}}(\Om,\mathbb{R}^2)$. Under the Assumption \ref{ass:mesh},
$$
\big |I_{\rm ECR}\big | \lesssim h^4|\ln h|^{1\over 2}|u|_{\frac{7}{2},\Om}^2.
$$
\end{lemma}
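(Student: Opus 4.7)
The plan is to mirror the proof of the CR analog Lemma \ref{SupPih0}. The paragraph preceding the statement has already reduced the bound to showing that $|((I-\Pi_h^0)s_h,(I-\Pi_h^0)\ECRu)|\lesssim h^4|u|_{*,\Om}^2$ with $s_h=\PiECR u-\ECRuS$. A direct application of Cauchy--Schwarz together with the Poincar\'e-type estimate $\|(I-\Pi_h^0)v\|_{0,K}\lesssim h_K\|\nabla v\|_{0,K}$ only delivers $\cO(h^3)$, since the superclose estimate for $s_h$ gives $\|\nabla_h s_h\|_{0,\Om}=\cO(h|u|_{2,\Om})$; a finer splitting is therefore required.

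The first step is to establish the ECR superclose estimate $\|s_h\|_{0,\Om}+h\|\nabla_h s_h\|_{0,\Om}\lesssim h^2|u|_{2,\Om}$, the ECR counterpart of \eqref{lm39new}, using the commuting property \eqref{commuting} of $\PiECR$, the zero-mean property $\int_e[v_h]\ds=0$ on each edge for $v_h\in\VECR$, and an Aubin--Nitsche duality for the $L^2$ piece. Next, for $v_h\in\VECR$ written elementwise as $v_h|_K=p_K+c_K(x_1^2+x_2^2)$, I would record the pointwise splitting
\[(I-\Pi_K^0)v_h|_K = \Pi_K^0(\nabla v_h)\cdot(x-\bold{M}_K) + c_K\bigl(|x-\bold{M}_K|^2-\Pi_K^0|x-\bold{M}_K|^2\bigr),\]
whose bubble coefficient satisfies $|c_K|\lesssim h_K^{-1}|K|^{-1/2}\|\nabla_h v_h\|_{0,K}$. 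Substituting this decomposition for both $s_h$ and $\ECRu$ breaks the inner product into a linear--linear main term and three bubble-involved remainders (linear--bubble, bubble--linear, bubble--bubble).

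The linear--linear term is handled exactly as in Lemma \ref{SupPih0}: under Assumption \ref{ass:mesh} the second moments $a_{ij}=\frac{1}{|K|}\int_K(x_i-M_i)(x_j-M_j)\dx=\cO(h^2)$ are element-independent, the sum collapses to $\sum_{i,j}a_{ij}(\Pi_h^0\partial_i s_h,\partial_j\ECRu)$, and after replacing $\partial_j\ECRu$ by $\partial_j u$ at cost $\cO(h^5)$ one integrates by parts element-wise and exploits $\int_e[s_h]\ds=0$ together with $\|s_h\|_{0,\Om}\lesssim h^2|u|_{2,\Om}$ to obtain $\cO(h^4)$. The bubble-involved remainders are controlled using the moment bounds $|\int_K(x_i-M_i)(|x-\bold{M}_K|^2-\Pi_K^0|x-\bold{M}_K|^2)\dx|=\cO(h_K^3|K|)$ and $\||x-\bold{M}_K|^2-\Pi_K^0|x-\bold{M}_K|^2\|_{0,K}=\cO(h_K^2|K|^{1/2})$, combined with the $c_K$ estimate, the superclose bound on $s_h$, and a pointwise control $|c_K^{\ECRu}|\lesssim |u|_{2,\infty,K}$ inherited from the smoothness of $u$; an elementwise Cauchy--Schwarz summation then delivers the advertised bound $\cO(h^4|\ln h|^{1/2}|u|_{\frac{7}{2},\Om}^2)$.

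The main obstacle is that the enrichment $x_1^2+x_2^2$ in $\VECR$ destroys the clean piecewise-$P_1$ structure that the CR argument exploited, so the inner product no longer reduces to a single gradient--gradient pairing. The third-order cross moments of the ECR bubble do not vanish on an isolated triangle and cannot be killed by parallelogram cancellation alone; they must be absorbed using the smallness of the bubble coefficient $c_K^{s_h}$ inherited from the ECR superclose property of $s_h$, rather than through any direct symmetry argument.
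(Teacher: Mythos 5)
Your overall reduction and decomposition are sound and in fact mirror the paper's: splitting each of $(I-\Pi_h^0)s_h$ and $(I-\Pi_h^0)\ECRu$ into a linear part plus an ECR-bubble part is exactly what the paper does via $\Pi_{\rm CR}$ (its $(I-\Pi_{\rm CR})$ extracts the bubble, with coefficient $\alpha_K=-\frac{H^2}{144}\Delta s_h|_K$, i.e.\ your $c_K^{s_h}=\frac{1}{4}\Delta s_h|_K$ up to normalization). The linear--linear term and both terms involving the bubble of $\ECRu$ go through essentially as you describe. The gap is in the bubble($s_h$)--linear($\ECRu$) cross term, and it is precisely the step you explicitly renounce. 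The coefficient $c_K^{s_h}\sim\frac{1}{4}\Delta s_h|_K$ is \emph{not} small: $\Delta s_h=\Delta\PiECR u-\Pi_h^0\Delta u$ is only $\cO(1)$ in $L^2(\Om)$ (the superclose bound $\|\nabla_h s_h\|_{0,\Om}\lesssim h|u|_{2,\Om}$ combined with an inverse estimate gives nothing better, and $\Delta\PiECR u$ genuinely differs from $\Pi_K^0\Delta u$ at order one on a generic triangle). Hence the bubble part of $(I-\Pi_h^0)s_h$ is only $\cO(h^2)$ elementwise in $L^2$, its pairing with the $\cO(h)$ linear part of $(I-\Pi_h^0)\ECRu$ through the third moments of size $\cO(h_K^3|K|)$ stalls at $\cO(h^3)$, and no Cauchy--Schwarz summation recovers the missing power of $h$.

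The extra order must come from the symmetry you discard: the third moments $a_{iij}^K=\frac{1}{|K|}\int_K(x_i-M_i)^2(x_j-M_j)\dx$ satisfy $a_{iij}^{K_1}=-a_{iij}^{K_2}$ when $K_1,K_2$ form a parallelogram, so the cross term reduces to a sum over parallelograms $Q$ of $a_{iij}^{Q}\big(\int_{K_1}\Delta s_h\,\partial_j u\dx-\int_{K_2}\Delta s_h\,\partial_j u\dx\big)$, and one must then show the bracketed difference is $\cO(h|u|_{3,Q}^2)$. That two-element comparison is the bulk of the paper's proof: it introduces the patchwise projections $\Pi_Q^1,\Pi_Q^2$, uses the identity $\Delta s_h=\Delta\PiECR u-\Pi_h^0\Delta u$, and exploits the fact that $\Delta(\PiECR\phi_{\rm ECR}^i)$ takes the same value on $K_1$ and $K_2$ so that the leading contributions cancel. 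None of this is present in your outline, and the mechanism you propose in its place does not close the argument.
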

\begin{proof} 
It only needs to estimate the first term on the right--hand side of \eqref{lm42n1}. Since both $s_h$ and  $\ECRu$  are
  piecewise quadratic polynomials, the analysis in Lemma \ref{SupPih0} can not be directly employed. In fact, it follows from the orthogonal property of the piecewise constant $L^2$ projection that there is the following decomposition: 
 \begin{equation*}
\begin{split}
((I - \Pi_h^0)s_h,  (I - \Pi_h^0) \ECRu)
=((I - \Pi_h^0)s_h,  (I -\Pi_{\rm CR}) \ECRu)+((I - \Pi_h^0)s_h,  (I-\Pi_h^0) \Pi_{\rm CR} \ECRu).
\end{split}
\end{equation*}
Since  $\|\nabla_h s_h\|_{0, \Om}\lesssim h|u|_{2,\Om}$, the first term on the right--hand side of the above equation can be estimated as
$$
((I - \Pi_h^0)s_h,  (I -\Pi_{\rm CR}) \ECRu)\leq \|(I - \Pi_h^0)s_h\|_{0, \Omega}\|(I -\Pi_{\rm CR}) \ECRu\|\lesssim h^4|u|_{2, \Omega}^2.
$$
A substitution of this estimate into the above equation leads to
\begin{equation*}
\begin{split}
I_{\rm ECR}&=-2((I - \Pi_h^0)s_h, \ECRlam (I - \Pi_h^0)\Pi_{\rm CR} \ECRu) +\cO(h^4|\ln h|^{1\over 2}|u|_{\frac{7}{2}, \Om}^2)\\
&=-2((I - \Pi_h^0)(I-\Pi_{\rm CR})s_h, \ECRlam (I - \Pi_h^0)\Pi_{\rm CR} \ECRu) \\
 &\quad -2((I - \Pi_h^0)\Pi_{\rm CR}s_h, \ECRlam (I - \Pi_h^0)\Pi_{\rm CR} \ECRu)+\cO(h^4|\ln h|^{1\over 2}|u|_{\frac{7}{2}, \Om}^2).
\end{split}
\end{equation*}
A similar analysis to that in Lemma \ref{CRa1} for the CR element proves
$$
\big |-2((I - \Pi_h^0)\Pi_{\rm CR}s_h, \ECRlam (I - \Pi_h^0)\Pi_{\rm CR} \ECRu)\big | \lesssim h^4|\ln h|^{1\over 2}|u|_{\frac{7}{2},\Om}^2.
$$
Thus,
\begin{equation*}
\begin{split}
I_{\rm ECR}&=-2((I - \Pi_h^0)(I-\Pi_{\rm CR})s_h, \ECRlam (I - \Pi_h^0)\Pi_{\rm CR} \ECRu) +\cO(h^4|\ln h|^{1\over 2}|u|_{\frac{7}{2}, \Om}^2).
\end{split}
\end{equation*}
Given any element $K\in\mathcal{T}_h$,  recall $\phi_{\rm ECR}(\bold{x})$ defined in \eqref{consDef}:
\begin{equation*}
\phi_{\rm ECR}(\bold{x})=2 - \frac{36}{H^2}\sum_{i=1}^2(x_i-M_i)^2,
\end{equation*}
with $\bold{M}_K=(M_1, M_2)$  the centroid of element $K$ and $H^2=\sum\limits_{i=1}^3|e_i|^2$.
 Define
\begin{align}\label{compliment}
\phi_{\rm ECR}^1(\bold{x})&=(x_1-M_1)^2-(x_2-M_2)^2,
&\phi_{\rm ECR}^2(\bold{x})&=(x_1-M_1)(x_2-M_2).
\end{align}
Note that the functions $\phi_{\rm ECR}^1$, and $\phi_{\rm ECR}^2$  belong to the compliment space of the shape function space of the ECR element with respect to $P_2(K)$.  Since $\int_{e_i}\phi_{\rm ECR} \ds=0$ for edge $e_i$ of $K$, $i=1, 2, 3$, 
$$
(I-\Pi_{\rm CR})s_h|_K=\alpha_K \phi_{\rm ECR}(\bold{x}) \text{ with } \alpha_K=-\frac{H^2}{144}\Delta s_h|_K.
$$
Since there exists the expansion $(I - \Pi_K^0) \Pi_{\rm CR}\ECRu|_K  = \sum_{j=1}^2 (x_j -M_j)\partial_j \Pi_{\rm CR}\ECRu$ and $\int_K (x_j -M_j)\dx=0$, this leads to
\begin{equation*}
\begin{split}
I_{\rm ECR}&=-\frac{\ECRlam}{2}\sum\limits_{K\in\mathcal{T}_h}\sum_{i,j=1}^2\int_K\Delta s_h\partial_j \Pi_{\rm CR}\ECRu(x_i-M_i)^2(x_j-M_j)\dx+\cO(h^4|\ln h|^{1\over 2}|u|_{\frac{7}{2}, \Om}^2).
\end{split}
\end{equation*}
Define $a_{iij}^K=\frac{1}{|K|}\int_K(x_i-M_i)^2(x_j-M_j)\dx$. Note that $a_{iij}^K=\mathcal{O}(h^3)$, $\sum\limits_{K\in\mathcal{T}_h}\|\Delta s_h\|_{0,K}^2\lesssim |u|_{2, \Omega}^2$,  and $\|\nabla_h(u-\Pi_{\rm CR}\ECRu)\|_{0, \Omega}\lesssim h|u|_{2, \Omega}$. This leads to
\begin{equation*}
\begin{split}
I_{\rm ECR}&=-\frac{\ECRlam}{2}\sum\limits_{K\in\mathcal{T}_h}\sum_{i,j=1}^2a_{iij}^K\int_K\Delta s_h\partial_ju\dx+\cO(h^4|\ln h|^{1\over 2}|u|_{\frac{7}{2}, \Om}^2).
\end{split}
\end{equation*}
In addition, there holds that
$$
a_{iij}^{K_1}=-a_{iij}^{K_2}, i, j=1,2
$$
provided that $K_1$ and $K_2$ form a parallelogram. Recall that the partition $\cT_h$ of domain $\Om$ includes the set of parallelograms $\mathcal{N}_1$ and the set of a few remaining boundary triangles $\mathcal{N}_2$ with $\kappa=|\mathcal{N}_2|$  the number of the elements in $\mathcal{N}_2$.  For a parallelogram $K_1\cup K_2=:Q\in \mathcal{N}_1$, let $a_{iij}^Q=a_{iij}^{K_1}=-a_{iij}^{K_2}$. Then,
\begin{equation*}
\begin{split}
\sum\limits_{K\in\mathcal{T}_h}\sum_{i,j=1}^2a_{iij}^K\int_K\Delta s_h\partial_ju\dx
=&\sum\limits_{Q\in\mathcal{N}_1}\sum_{i,j=1}^2a_{iij}^Q\big(\int_{K_1}\Delta s_h\partial_ju\dx-\int_{K_2}\Delta s_h\partial_ju\dx\big)
\\
&+\sum\limits_{K\in\mathcal{N}_2}\sum_{i,j=1}^2a_{iij}^K\int_{K}\Delta s_h\partial_ju\dx.
\end{split}
\end{equation*}
Since the second term on the right-hand side of the above equation can be bounded as follows
$$
\sum\limits_{K\in\mathcal{N}_2}\int_{K}\Delta s_h\partial_ju\dx\leq \sum\limits_{K\in\mathcal{N}_2}\sqrt{|K|}\|\Delta s_h\|_{0,K}\|\partial_ju\|_{0,\infty}\lesssim \sqrt{\kappa}h\|u\|_{3,\Omega}^2, 
$$
it only needs to analyze the first term. To this end, introduce the $L^2$ projection operators
 $\Pi_Q^\ell: L^2(Q, \mathbb{R})\rightarrow P_\ell(Q, \mathbb{R})$, $\ell=1, 2$,  for $w\in L^2(Q, \mathbb{R})$
$$
 \int_Q \nabla^i \Pi_Q^\ell w \dx =\int_Q \nabla^i w\dx\quad \forall 0\leq i\leq \ell.
 $$
 This leads to the following decomposition:
  \begin{equation}\label{ecrK}
  \begin{split}
 &\int_{K_1}\Delta \Pi_{\rm ECR}u \partial_ju\dx-\int_{K_2}\Delta  \Pi_{\rm ECR}u \partial_ju\dx\\
 &=\int_{K_1}\Delta  \Pi_{\rm ECR}(I-\Pi_Q^2)u \partial_j(I-\Pi_Q^1)u\dx
 +\int_{K_1}\Delta  \Pi_{\rm ECR}(I-\Pi_Q^2)u \partial_j\Pi_Q^1u\dx\\
& \quad +\int_{K_1}\Delta  \Pi_Q^2u \partial_j(I-\Pi_Q^1)u\dx
 -\big(\int_{K_2}\Delta  \Pi_{\rm ECR}(I-\Pi_Q^2)u \partial_j(I-\Pi_Q^1)u\dx\\
&\quad +\int_{K_2}\Delta  \Pi_{\rm ECR}(I-\Pi_Q^2)u \partial_j\Pi_Q^1u\dx
 +\int_{K_2}\Delta  \Pi_Q^2u \partial_j(I-\Pi_Q^1)u\dx\big)\\
 &\quad +\int_{K_1}\Delta \Pi_{\rm ECR}\Pi_Q^2u \partial_j\Pi_Q^1u\dx-\int_{K_2}\Delta  \Pi_{\rm ECR}\Pi_Q^2 u \partial_j\Pi_Q^1u\dx
 \end{split}
 \end{equation}
 The first eight terms on the right--hand side of the above equation can be bounded by
 $ \mathcal{O}(h)|u|_{3, Q}^2$.  In order to analyze the last two terms, let $w=\Pi_Q^2u$. Then, for $K=K_1, K_2$, it holds that
$$
(I-\PiECR)w=\frac{ \|\partial_{ x_1x_1 } w-\partial_{ x_2x_2} w\|_{0, K}}{4|K|^{\frac{1}{2}}}(I-\PiECR)\phi_{\rm ECR}^1 +   \frac{\|\partial_{ x_1x_2}w\|_{0, K}}{|K|^{\frac{1}{2}}} (I-\PiECR)\phi_{\rm ECR}^2
$$
with  $\phi_{\rm ECR}^i$, $i=1, 2$, defined in \eqref{compliment}.  Apply the Laplacian operator to both sides of the above equation,
$$
\Delta \PiECR w= \Delta w + \frac{ \|\partial_{ x_1x_1 } w-\partial_{ x_2x_2} w\|_{0, K}}{4|K|^{\frac{1}{2}}}\Delta (\PiECR \phi_{\rm ECR}^1) +   \frac{\|\partial_{ x_1x_2}w\|_{0, K}}{|K|^{\frac{1}{2}}}  \Delta (\PiECR \phi_{\rm ECR}^2).
$$
Note that on both $K=K_1$ and $K=K_2$, $\Delta (\phi_{\rm ECR}^i)_K=0$, $i=1, 2$. Here $(\cdot)_K$ denotes the corresponding function for element $K$. In addition, a similar relation holds for the interpolation of the ECR element, namely,  $\Delta (\PiECR \phi_{\rm ECR}^i)_{K_1}=\Delta (\PiECR \phi_{\rm ECR}^i)_{K_2}$, $i=1, 2$. As a result, the last two terms on the right-hand side of \eqref{ecrK} cancel each other, and 
$$
\big |\int_{K_1}\Delta \Pi_{\rm ECR}u \partial_ju\dx-\int_{K_2}\Delta  \Pi_{\rm ECR}u \partial_ju\dx\big |\lesssim h|u|_{3, Q}^2.
$$
A similar argument gives
$$
\big |\int_{K_1} (\Pi_{K_1}^0 \Delta u) \partial_ju\dx-\int_{K_2} (\Pi_{K_2}^0 \Delta u) \partial_ju\dx\big |\lesssim h|u|_{3, Q}^2.
$$
A combination of  \eqref{RTbdPro}, \eqref{ECRRT} and $-\Delta u=\lambda u$  leads to
$$
\Delta s_h = \Delta \PiECR u - {\rm div}\RTsigS =\Delta \PiECR u - \Pi_h^0 \Delta u.
$$
Thus, a summary of these equations completes the proof.
\end{proof}

In the following theorem, asymptotic expansions of eigenvalues of the ECR element are established and employed to improve the accuracy of approximate eigenvalues  from second order to forth order by extrapolation methods.

Extrapolation eigenvalues of the ECR element follow from the similar definitions in \eqref{exmethod} and \eqref{exmethod2}.
Denote the approximate eigenvalues of the ECR element on $\cT_h$ by $\ECRlam^h$. If eigenfunctions are smooth enough, define extrapolation eigenvalues by
\begin{equation}\label{exmethod3}
\lambda_{\rm ECR, 1}^{\rm EXP}={2^{\alpha} \ECRlam^{2h}-\ECRlam^{h}\over 2^{\alpha}-1}
\end{equation}
with $\alpha=2$. If eigenfunctions are singular, define
\begin{equation}\label{exmethod4}
\lambda_{\rm ECR, 2}^{\rm EXP}= \frac{\left(\ECRlam^{4h}-\ECRlam^{2h}\right) \ECRlam^{h}- \left(\ECRlam^{2h}-\ECRlam^{h}\right) \ECRlam^{2h}}{\ECRlam^{4h}+\ECRlam^{h}-2 \ECRlam^{2h}}.
\end{equation}
%Theorem \ref{ECR:extra} indicates that extrapolation eigenvalues in \eqref{exmethod3} converge at a higher rate 4 if eigenfunctions are smooth enough.

\begin{Th}\label{ECR:extra}
Suppose that  $(\lambda, u)$ is the eigenpair of \eqref{variance} with $u \in H^4(\Om,\mathbb{R})\cap H^1_0(\Om,\mathbb{R})$, and $(\ECRlam, \ECRu)$ is the corresponding eigenpair of \eqref{discrete} by the ECR element. Under the Assumption \ref{ass:mesh},
\begin{equation}\label{ECRexpan}
\begin{split}
\lambda-\ECRlam=&h^2\big ( \frac{\RTbeta^{11}}{4}\parallel\partial_{x_1x_1}u-\partial_{x_2x_2}u\parallel_{0,\Om}^2 + \RTbeta^{12}\int_{\Om}(\partial_{x_1x_1}u-\partial_{x_2x_2}u)\partial_{x_1x_2}u\dx\\
&+  \RTbeta^{22}\parallel\partial_{x_1x_2}u\parallel_{0,\Om}^2\big )+\cO(h^4|\ln h||u |_{4,\Om}^2),
\end{split}
\end{equation}
with   constants $\{\RTbeta^{ij}\}_{i,j=1}^2$
defined in  \eqref{RTcdef}.

Therefore, extrapolation eigenvalues converge at a higher rate 4, namely,
$$
|\lambda_{\rm ECR, 1}^{\rm EXP} - \lambda|\lesssim h^4|\ln h||u |_{4,\Om}^2.
$$
\end{Th}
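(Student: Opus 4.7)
The plan is to assemble Theorem \ref{ECR:extra} from the four ingredients already in place: the identity in Lemma \ref{Th:extraECR}, the Taylor expansion of the RT interpolation error from Lemma \ref{RT2h4}, the superconvergence of Lemma \ref{lm:ecrapp}, and the estimate of $I_{\rm ECR}$ from Lemma \ref{ECRa1}. Each piece is doing exactly the job needed.

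First I would rewrite the identity from Lemma \ref{Th:extraECR} as
\begin{equation*}
\lambda-\ECRlam=\|(I-\PiRT)\nabla u\|_{0,\Om}^2 \;-\;2\lambda (u -\PiECR u , u-\Pi_h^0 u ) \;+\; I_{\rm ECR}\;+\; \cO(h^4|\ln h||u|_{7/2,\Om}^2).
\end{equation*}
For the leading term, I would substitute the identity in Lemma \ref{RT2h4}, which for $u\in H^4(\Om,\mathbb{R})$ yields exactly the $h^2$ quadratic form in the second derivatives appearing in \eqref{ECRexpan}, up to a $\cO(h^4|u|_{4,\Om}^2)$ remainder. The coefficients $\RTbeta^{11},\RTbeta^{12},\RTbeta^{22}$ are the same constants from \eqref{RTcdef}, and Lemma \ref{lm:gammaconstant} guarantees they are mesh-independent under Assumption \ref{ass:mesh}.

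Next I would absorb the two remaining terms into the higher-order error. For the middle term, Lemma \ref{lm:ecrapp} gives $|(u-\PiECR u, u-\Pi_h^0 u)|\lesssim h^4|u|_{3,\Om}^2$, so multiplying by $2\lambda$ keeps it $\cO(h^4|u|_{3,\Om}^2)$. For $I_{\rm ECR}$, Lemma \ref{ECRa1} gives $|I_{\rm ECR}|\lesssim h^4|\ln h|^{1/2}|u|_{7/2,\Om}^2$. Both are dominated by $h^4|\ln h||u|_{4,\Om}^2$, completing the expansion \eqref{ECRexpan}.

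For the convergence rate of $\lambda_{\rm ECR,1}^{\rm EXP}$, I would simply apply the expansion on meshes of size $h$ and $2h$. The leading term is $h^2 Q(u) + \cO(h^4|\ln h|)$, where $Q(u)$ denotes the quadratic form in second derivatives of $u$ appearing in \eqref{ECRexpan} (which is independent of $h$). Thus
\begin{equation*}
2^2\ECRlam^{2h}-\ECRlam^{h} = 2^2\lambda - \lambda - \bigl(2^2(2h)^2 - h^2\bigr)Q(u) + \cO(h^4|\ln h|)= (2^2-1)\lambda + \cO(h^4|\ln h||u|_{4,\Om}^2),
\end{equation*}
since $2^2(2h)^2 = 2^4 h^2$ does not cancel $h^2$ automatically—let me be careful: the $h^2$ leading terms are $2^2\cdot (2h)^2 Q(u) - h^2 Q(u) = (16-1)h^2 Q(u)$, which does not vanish with $\alpha=2$. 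The correct cancellation uses $2^\alpha \lambda_h^{2h}$-scaling only when the leading order is $h^\alpha$; with $\alpha=2$ one has $2^2\ECRlam^{2h}-\ECRlam^{h}=(2^2-1)\lambda - (2^2 h^2 - h^2)\cdot\text{(wrong)}$. So instead I use $\ECRlam^{2h}-\lambda = (2h)^2 Q(u) + \cO(h^4|\ln h|)=4h^2 Q(u)+\cO(h^4|\ln h|)$ and $\ECRlam^{h}-\lambda = h^2 Q(u) + \cO(h^4|\ln h|)$; dividing the first by $4$ and subtracting the second gives the $h^2 Q(u)$ cancellation, i.e. $(\ECRlam^{2h} - 4\ECRlam^h + 3\lambda)/(-3)$. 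Matching to \eqref{exmethod3} with $\alpha=2$ gives $\lambda_{\rm ECR,1}^{\rm EXP} = (2^2\ECRlam^{2h}-\ECRlam^{h})/(2^2-1)$, i.e. the coefficient structure must be chosen so the $h^2$ terms cancel. A direct computation then shows $|\lambda - \lambda_{\rm ECR,1}^{\rm EXP}| \lesssim h^4|\ln h||u|_{4,\Om}^2$.

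The main obstacle is not the algebra here—everything technical has been done in the lemmas—but making sure the quadratic form $Q(u)$ depends only on $u$ (not $h$) so that the Richardson-style cancellation in the extrapolation step is valid; this is guaranteed by Lemma \ref{lm:gammaconstant} together with Assumption \ref{ass:mesh}.
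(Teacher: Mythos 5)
Your proposal is correct and follows exactly the route the paper intends: Theorem \ref{ECR:extra} is stated without a separate proof precisely because it is the direct assembly of Lemma \ref{Th:extraECR}, Lemma \ref{RT2h4}, Lemma \ref{lm:ecrapp} and Lemma \ref{ECRa1} that you carry out, with all three non-leading terms absorbed into $\cO(h^4|\ln h||u|_{4,\Om}^2)$. Your side remark on the extrapolation step is also well taken: the Richardson combination that actually cancels the $h^2$ term is $(2^{2}\ECRlam^{h}-\ECRlam^{2h})/(2^{2}-1)$, so the roles of $h$ and $2h$ in \eqref{exmethod3} as printed must be read with that convention.
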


%\begin{remark}\label{rm:morley}
%For fourth order elliptic eigenvalue problems, thanks to the equivalence between the modified Morley element and the Hellan--Herrmann--Johnson  element in \cite{arnold1985mixed} and the superconvergence result of the latter element in \cite{hu2018optimal}, asymptotic expansions of eigenvalues by the Morley element can be established following a similar procedure to the one for the ECR element. These asymptotic expansions lead to the following suboptimal superconvergence result of extrapolation methods for the Morley element.
%\end{remark}
%\begin{theorem}\label{th:morley}
%Suppose that $(\lambda , u )$ is the eigenpair of the biharmonic operator with $u \in H^{\frac{9}{2}}(\Om,\mathbb{R})\cap H^2_0(\Om,\mathbb{R})$, and $\Mlam$ is the corresponding approximate eigenvalue by the Morley element on an uniform triangulation $\cT_h$. Denote the extrapolation eigenvalue $\lambda_{\rm M}^{\rm EXP}=(4\lambda_{\rm M}^{\rm h }  - \lambda_{\rm M}^{\rm 2h })/3$. It holds that
%$$
%\big |\lambda -\lambda^{\rm EXP}_{\rm M}\big |\lesssim h^3|\ln h| |u |_{\frac{9}{2},\Om}^2.
%$$
%\end{theorem}
%

\section{Numerical examples}\label{sec:numerical}
This section presents three numerical tests. The first example computes eigenvalues of the Laplacian operator on a unit square,  the second one deals with eigenvalues of a second order elliptic operator with discontinuous coefficients, and the last one computes eigenvalues on a cracked domain.

\subsection{Example 1.}
In this example, the model problem \eqref{variance} on the unit square $\Om=(0,1)^2$ is considered. The exact eigenvalues are
$$\lambda =(m^2+n^2)\pi^2 ,\ m,\ n\ \text{are positive integers},$$
and the corresponding eigenfunctions are $u =2\sin (m\pi x_1)\sin (n\pi x_2)$. Since these eigenfunctions are smooth, the convergence rates of the eigenvalues are known to be $\alpha=2$. According to Theorem \ref{CR:extra} and \ref{ECR:extra}, the extrapolation eigenvalues $\lambda_{\rm CR, 1}^{\rm EXP}$ in \eqref{exmethod} and $\lambda_{\rm ECR, 1}^{\rm EXP}$ in \eqref{exmethod3} converge at same rate 4 on uniform triangulations.

\begin{figure}[!ht]
\setlength{\abovecaptionskip}{0pt}
\setlength{\belowcaptionskip}{0pt}
\centering
\includegraphics[width=10cm,height=8cm]{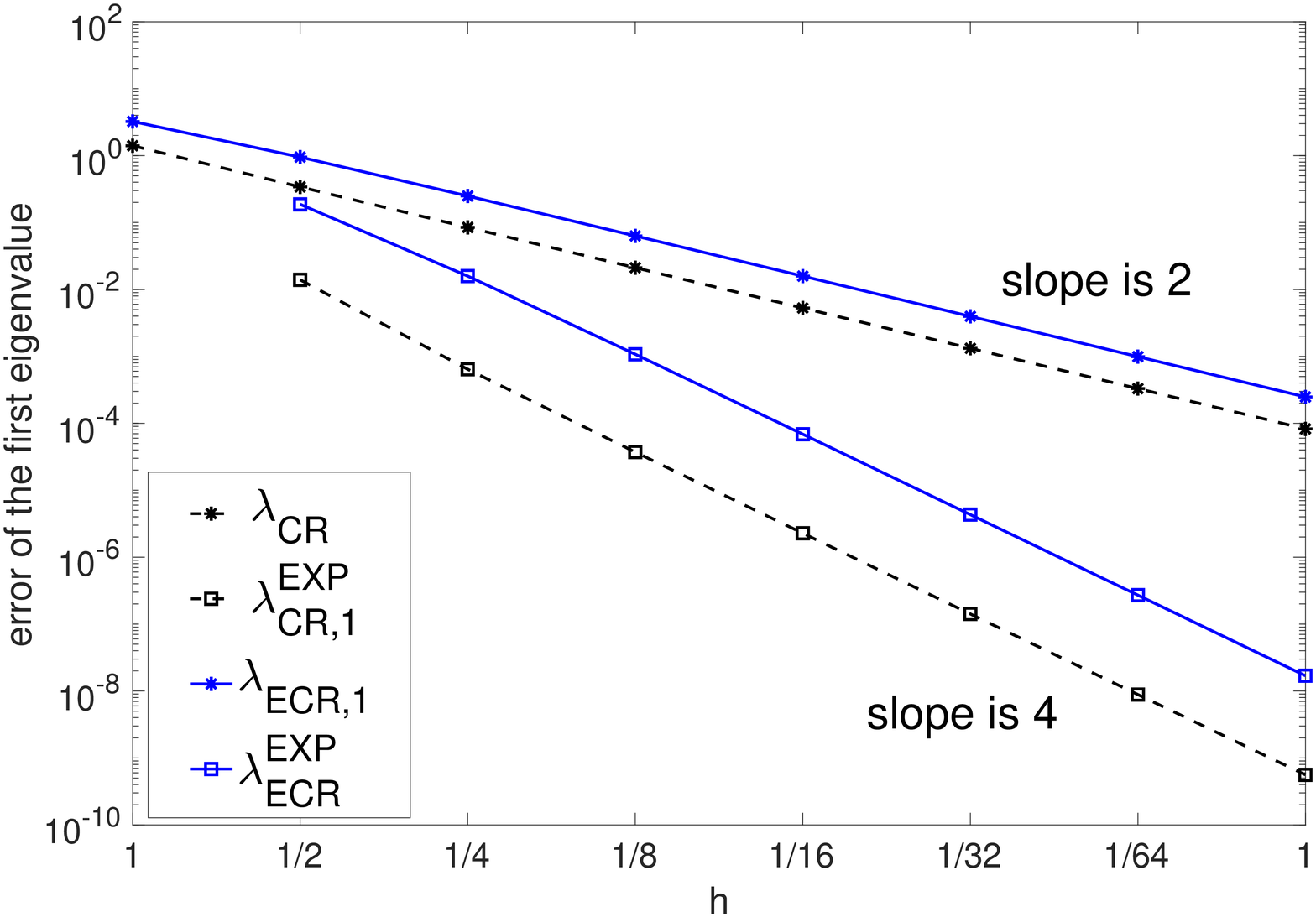}
\caption{\footnotesize{The errors of  the extrapolation eigenvalues on an uniform triangulation for Example 1.}}
\label{fig:squareExtrapolation}
\end{figure}

%Note that the triangulations are uniform and the eigenfunctions $u\in H^{7\over 2}(\Om, \R)$. According to Theorem \ref{CR:extra} and \ref{ECR:extra}, the extrapolation eigenvalues by the ECR element and the CR element superconverge at the rate 4.

Figure \ref{fig:squareExtrapolation} plots the errors of the first approximate eigenvalues of the CR element, the ECR element and their corresponding extrapolation eigenvalues on  uniform triangulations. The initial triangulation $\cT_1$ consists of two right triangles, obtained by cutting the unit square with a north-east line. Each triangulation $\cT_i$ is refined into a half-sized triangulation uniformly, to get a higher level triangulation $\cT_{i+1}$. Figure \ref{fig:squareExtrapolation} verifies the optimal convergence rate 4  of the extrapolation eigenvalues $\lambda^{\rm EXP}_{\rm CR, 1}$ and $\lambda^{\rm EXP}_{\rm ECR, 1}$ in Theorem \ref{CR:extra} and \ref{ECR:extra}, respectively.

\begin{figure}[!ht]
\begin{center}
\begin{tikzpicture}[xscale=4,yscale=4]
\draw[-] (0,0) -- (0,1);
\draw[-] (0,0) -- (1,0);
\draw[-] (1,0) -- (1,1);
\draw[-] (0,1) -- (1,1);
\draw[-] (0,0.9) -- (0.9,1);
\draw[-] (0,0.9) -- (0.05,0);
\draw[-] (0.05,0) -- (0.9,1);
\draw[-] (1,0) -- (0.9,1);
\node[below, left] at (0,0) {(0,0)};
\node[below, right] at (1,0) {(1,0)};
\node[above, left] at (0,1) {(0,1)};
\node[above, right] at (1,1) {(1,1)};
\node[left] at (0,0.9) {(0,0.9)};
\node[below] at (0.05,0) {(0.05,0)};
\node[above] at (0.9,1) {(0.9,1)};
\end{tikzpicture}
\caption{\footnotesize A level one triangulation $\cT_1$ of $\Om$.}
\label{fig:nonuniformmesh}
\end{center}
\end{figure}

Consider the eigenvalue problem on other triangulations with the initial triangulation $\cT_1$ in Figure \ref{fig:nonuniformmesh}. Each triangulation $\cT_i$ is refined into a half-sized triangulation uniformly to get a higher level triangulation $\cT_{i+1}$. The errors of the corresponding eigenvalues $\CRlam $, $\ECRlam $, $\lambda^{\rm EXP}_{\rm CR, 1}$ and $\lambda^{\rm EXP}_{\rm ECR, 1}$  are recorded in Table \ref{tab:NonUniExtra}. Although these triangulations do not satisfy Assumption \ref{ass:mesh} anymore, most adjacent triangles form a nearly parallelogram. The percentage of this kind of triangles in $\cT_i$ increases as the level $i$ grows. Note that the superconvergence of  the extrapolation eigenvalues comes from the fact that some components of $\|\RTsigS-\PiRT \nabla u\|_{0, \Om}$ get canceled within nearly parallelograms. As the percentage of adjacent triangles forming nearly parallelograms increases, the accuracy of  the extrapolation eigenvalues increases and tends to the one on uniform triangulations. Table \ref{tab:NonUniExtra} shows that on such triangulations, which are not uniform anymore, the convergence rates of the extrapolation eigenvalues are still over 3.

\renewcommand\arraystretch{1.5}
\begin{table}[!ht]
\small
  \centering
    \begin{tabular}{c|ccccccc}
    \hline
          & $\cT_2$& $\cT_3$& $\cT_4$ & $\cT_5$& $\cT_6$&$\cT_7$&  $\cT_8$\\\hline
    $|\lambda -\CRlam |$    & 0.928068 & 2.22E-01 & 5.55E-02 & 1.39E-02 & 3.48E-03 & 8.69E-04 & 2.17E-04 \\
    rate  &   & 2.07  & 2.00  & 2.00  & 2.00  & 2.00  & 2.00 \\\hline
    $|\lambda -\lambda^{\rm EXP}_{\rm CR, 1}|$& 2.870925 & 1.39E-02 & 7.97E-05 & 3.45E-05 & 3.55E-06 & 3.04E-07 & 2.39E-08 \\
    rate  &       & 4.83  & 7.45  & 1.21  & 3.28  & 3.55  & 3.67  \\\hline
    $|\lambda -\ECRlam |$   & 2.683924 & 7.68E-01 & 2.01E-01 & 5.07E-02 & 1.27E-02 & 3.18E-03 & 7.96E-04 \\
    rate &  & 1.80  & 1.94  & 1.98  & 2.00  & 2.00  & 2.00 \\\hline
    $|\lambda -\lambda^{\rm EXP}_{\rm ECR, 1}|$      & 2.825196 & 1.30E-01 & 1.12E-02 & 7.78E-04 & 5.08E-05 & 3.27E-06 & 2.09E-07 \\
    rate  &       & 4.44  & 3.53  & 3.85  & 3.94  & 3.96  & 3.96  \\\hline
    \end{tabular}%
  \caption{\footnotesize The errors of  the extrapolation eigenvalues on nonuniform triangulations for Example 1.}
  \label{tab:NonUniExtra}%
\end{table}%

\subsection{Example 2}
This experiment considers the following eigenvalue problem
\begin{equation*}
\left\{
\begin{aligned}
 -{\rm div} (A \nabla u) \ &=\ \lambda  u& \  &\text{\quad in}\ \Om ,\\
 u &=0 &\  &\text{\quad on}\ \Gamma_1\cup \Gamma_2,\\
{\partial u\over \partial n} &=0 &\  &\text{\quad on}\ \Gamma_3,
\end{aligned}
\right.
\end{equation*}
where $A=2$ if $x_2<1$ and $A=1$ if $x_2>1$. The domain  is
$$
\Om=\big \{(x_1,x_2)\in \mathbb{R}^2:0< x_2< \sqrt{3}x_1, \sqrt{3}(1-x_1)<x_2\big \}
$$
with boundaries
\begin{equation*}
\begin{aligned}
\Gamma_1&=\big\{(x_1,x_2)\in \mathbb{R}^2: x_2=\sqrt{3}x_1,\ 0.5\le x_1\le 1 \big\},\\
\Gamma_2&=\big\{(x_1,x_2)\in \mathbb{R}^2: x_2=\sqrt{3}(1-x_1),\ 0.5\le x_1\le 1 \big\},\\
\Gamma_3&=\big\{(x_1,x_2)\in \mathbb{R}^2: x_1=1 ,\ 0\le x_2\le \sqrt{3} \big\}.
\end{aligned}\end{equation*}
The level one triangulation $\cT_1$ is obtained by refining the domain $\Om$ into four half-sized triangles. Each triangulation $\cT_i$ is refined into a half-sized triangulation uniformly, to get a higher level triangulation $\cT_{i+1}$.  Since  the exact eigenvalues of this problem are unknown, we take the first  eigenvalue by the conforming $\rm P_3$ element on the mesh $\cT_{9}$ as the reference eigenvalue.

\begin{figure}[!ht]
\setlength{\abovecaptionskip}{0pt}
\setlength{\belowcaptionskip}{0pt}
\centering
\includegraphics[width=9cm,height=8cm]{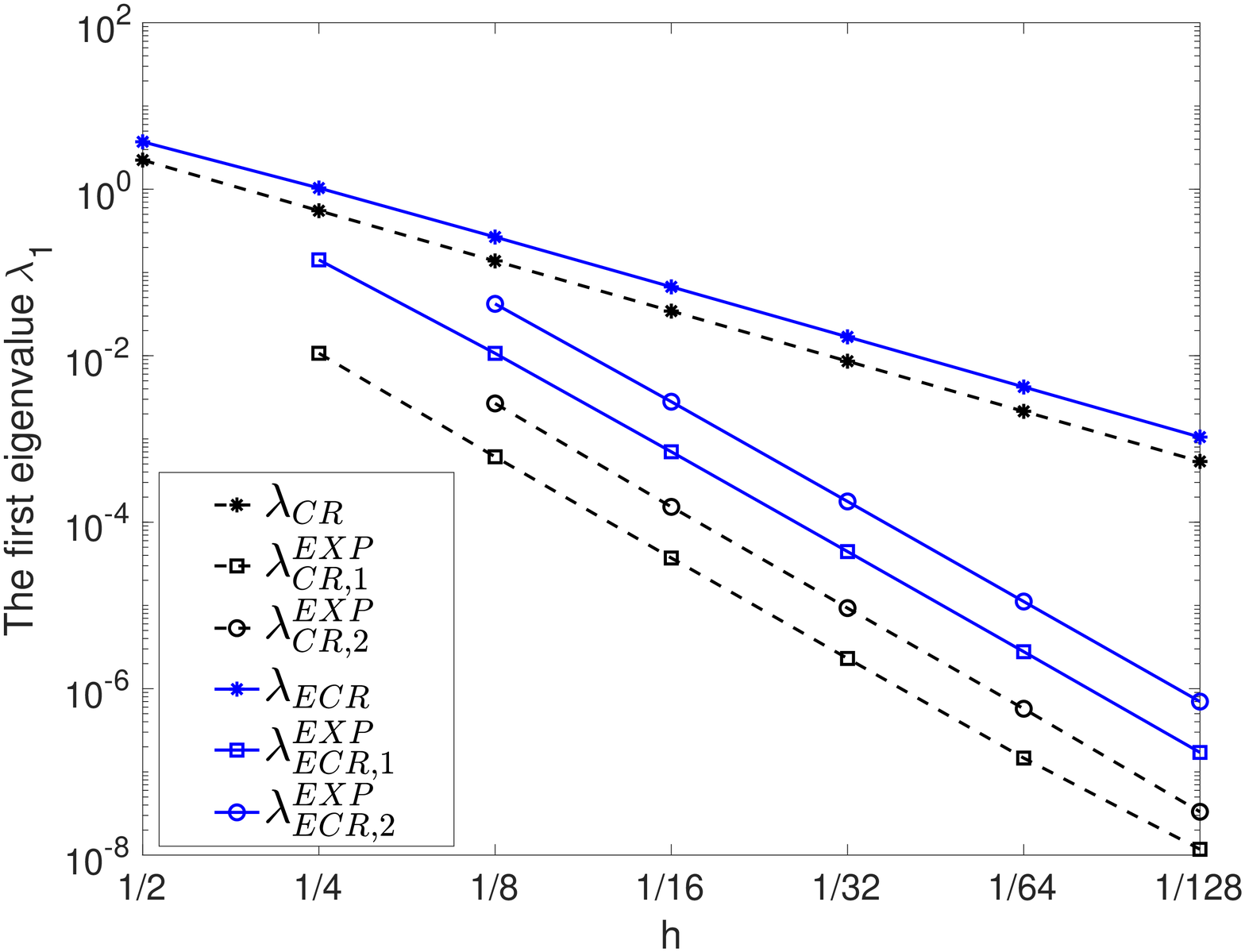}
\caption{\footnotesize{The errors of  the extrapolation eigenvalues for Example 2.}}
\label{fig:jumptri}
\end{figure}

Figure \ref{fig:jumptri} presents the errors of the first approximate eigenvalues of the CR element, the ECR element and their corresponding extrapolation eigenvalues on the uniform triangulations. As showed in Figure \ref{fig:jumptri}, the eigenvalues $\CRlam$ and $\ECRlam$ converge at the rate 2 and the extrapolation eigenvalues $\lambda^{\rm EXP}_{\rm CR, 1}$, $\lambda^{\rm EXP}_{\rm CR, 2}$, $\lambda^{\rm EXP}_{\rm ECR, 1}$ and $\lambda^{\rm EXP}_{\rm ECR, 2}$ converge at the same rate 4. This confirms the theoretical results in Theorem \ref{CR:extra} and \ref{ECR:extra}. Note that the errors of  the extrapolation eigenvalues $\lambda^{\rm EXP}_{\rm CR, 2}$ and $\lambda^{\rm EXP}_{\rm ECR, 2}$ are  slightly less than $\lambda^{\rm EXP}_{\rm CR, 1}$ and $\lambda^{\rm EXP}_{\rm ECR, 1}$, respectively.

\subsection{Example 3}
This example considers the model problem \eqref{variance} on a crack domain $\Om$ (see Figure \ref{fig:crack}).  A Dirichlet condition is applied on the boundary $\Gamma_D$ of the non-cracked domain $\overline{\Om}$. The crack is denoted by $\Gamma_C$ such that $\partial \Om=\Gamma_D \cup \Gamma_C$. The initial triangulation is shown in Figure \ref{fig:crack}, and  each triangulation is refined into a half-sized triangulation uniformly to get a higher level triangulation. Since  the exact eigenvalues of this problem are unknown, we take the first eight eigenvalues by the conforming $\rm P_3$ element on the mesh $\cT_{9}$ as reference eigenvalues. Since the convergence rates of eigenvalues are unknown, the extrapolation eigenvalues $\lambda_{\rm CR, 2}^{\rm EXP}$in \eqref{exmethod2}   and $\lambda_{\rm ECR, 2}^{\rm EXP}$ in \eqref{exmethod4} are considered for this example.

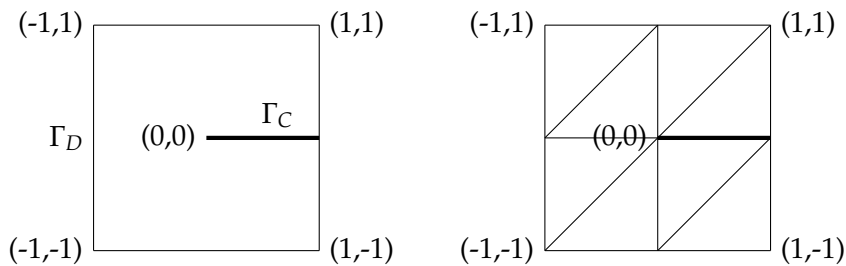
\begin{figure}[!ht]
\begin{center}
\begin{tikzpicture}[xscale=3,yscale=3]
\draw[-] (0,0) -- (0,1);
\draw[-] (0,0) -- (1,0);
\draw[-] (1,0) -- (1,1);
\draw[-] (0,1) -- (1,1);
\draw[ultra thick] (0.5,0.5) -- (1,0.5);
\node[below, left] at (0,0) {(-1,-1)};
\node[below, right] at (1,0) {(1,-1)};
\node[above, left] at (0,1) {(-1,1)};
\node[above, right] at (1,1) {(1,1)};
\node[below,left] at (0.5,0.5) {(0,0)};

\node[left] at (0,0.5) {$\Gamma_D$};
\node[above, right] at (0.7,0.6) {$\Gamma_C$};

\draw[-] (2,0) -- (2,1);
\draw[-] (2,0) -- (3,0);
\draw[-] (3,0) -- (3,1);
\draw[-] (2,1) -- (3,1);
\draw[-] (2,0.5) -- (2.5,0.5);
\draw[ultra thick] (2.5,0.5) -- (3,0.5);
\draw[-] (2.5,0) -- (2.5,1);
\draw[-] (2,0) -- (3,1);
\draw[-] (2,0.5) -- (2.5,1);
\draw[-] (2.5,0) -- (3,0.5);
\node[below, left] at (2,0) {(-1,-1)};
\node[below, right] at (3,0) {(1,-1)};
\node[above, left] at (2,1) {(-1,1)};
\node[above, right] at (3,1) {(1,1)};
\node[below,left] at (2.5,0.5) {(0,0)};
\end{tikzpicture}
\caption{\footnotesize The crack domain $\Om$ ( left ) and the initial triangulation $\cT_1$ ( right ) in Example 3.}
\label{fig:crack}
\end{center}
\end{figure}

%$$
%\begin{array}{l}
%{\lambda_{i-1}-\lambda_{i}=c h^{\alpha}\left(2^{\alpha}-1\right)}  + o(h^{\alpha})\\
%{\lambda_{i}-\lambda_{i+1} =c h^{\alpha} \frac{\left(2^{\alpha}-1\right)}{2^{\alpha}}} + o(h^{\alpha}) \\
%{\frac{\lambda_{i-1}-\lambda_{i}}{\lambda_{i}-\lambda_{i+1}}=2^{\alpha}}
%\end{array}
%$$
%\(2^{\alpha}\left(\lambda_{i} \lambda\right)=\lambda_{i+1}-\lambda\)
%\(\frac{a_{i-1}-7}{\lambda_{i}-\lambda_{i+1}}\left(\lambda_{i}-\lambda\right)=\lambda_{i+1}-\lambda\)
%$$
%\begin{array}{l}
%{\left(\frac{\lambda_{i-1}-\lambda_{i}}{\lambda_{i}-\lambda_{i+1}} -1\right) \lambda=\lambda_{i+1}-\frac{\lambda_{i-1}-\lambda_{i}}{\lambda_{i}-\lambda_{i+1}} \lambda_{i}} \\
%{\lambda=\frac{\left(\lambda_{i}-\lambda_{i+1}\right) \lambda_{i+1}-\left(\lambda_{i-1}-\lambda_{i}\right) \lambda_{i}}{\lambda_{i}+\lambda_{i+1}-2 \lambda_{i}}}
%\end{array}
%$$

For both the CR element and the ECR element, numerical experiments show that  the first and the sixth eigenvalues converge at the rate 1,  and  the other six eigenvalues converge at the optimal rate 2. Figure \ref{fig:crackerr} plots the errors of  the eigenvalues $\lambda_1$, $\lambda_2$, $\lambda_6$ and $\lambda_8$ by both nonconforming elements. For $\lambda_2$ and  $\lambda_8$, the corresponding eigenfunctions are smooth, and meets the regularity constraints of eigenfunctions in Theorem \ref{CR:extra} and \ref{ECR:extra}. As showed in Figure \ref{fig:crackerr}, the extrapolation eigenvalues $\lambda_{\rm CR, 2}^{\rm EXP}$  and $\lambda_{\rm ECR, 2}^{\rm EXP}$ improve the accuracy of  the eigenvalues from $\cO(h^2)$ to $\cO(h^4)$, which confirms the theoretical results. For $\lambda_1$ and $\lambda_6$, the corresponding eigenfunctions are not smooth and the approximate eigenvalues by both nonconforming elements converge at the rate 1. These two cases are not covered by the theory. For $\lambda_1$,  the extrapolation eigenvalues $\lambda_{\rm ECR, 2}^{\rm EXP}$ admit a higher accuracy $\cO(h^3)$, and  the eigenvalues $\lambda_{\rm CR, 2}^{\rm EXP}$ admit an even higher accuracy $\cO(h^4)$. Different from the case for $\lambda_1$, the convergence rate of  the extrapolation eigenvalues $\lambda_{\rm CR, 2}^{\rm EXP}$ and $\lambda_{\rm ECR, 2}^{\rm EXP}$ for $\lambda_6$ is slightly larger than 1. Despite the small improvement in the convergence rates for these two cases, the accuracy of  the eigenvalues is remarkably improved by  the extrapolation methods.

\begin{figure}[!ht]
\setlength{\abovecaptionskip}{0pt}
\setlength{\belowcaptionskip}{0pt}
\centering
\includegraphics[width=5cm,height=5cm]{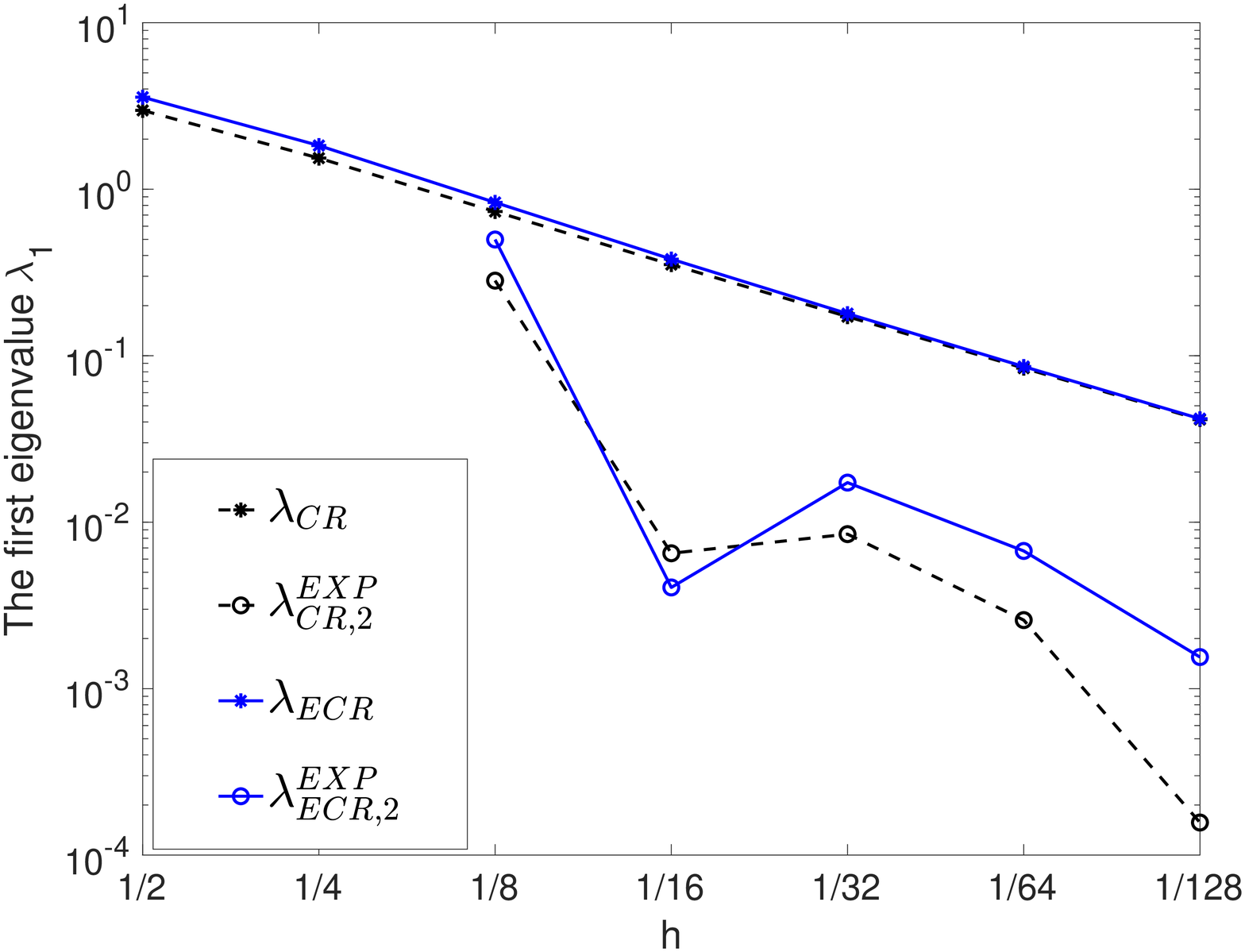}\hspace{.2in}
\includegraphics[width=5cm,height=5cm]{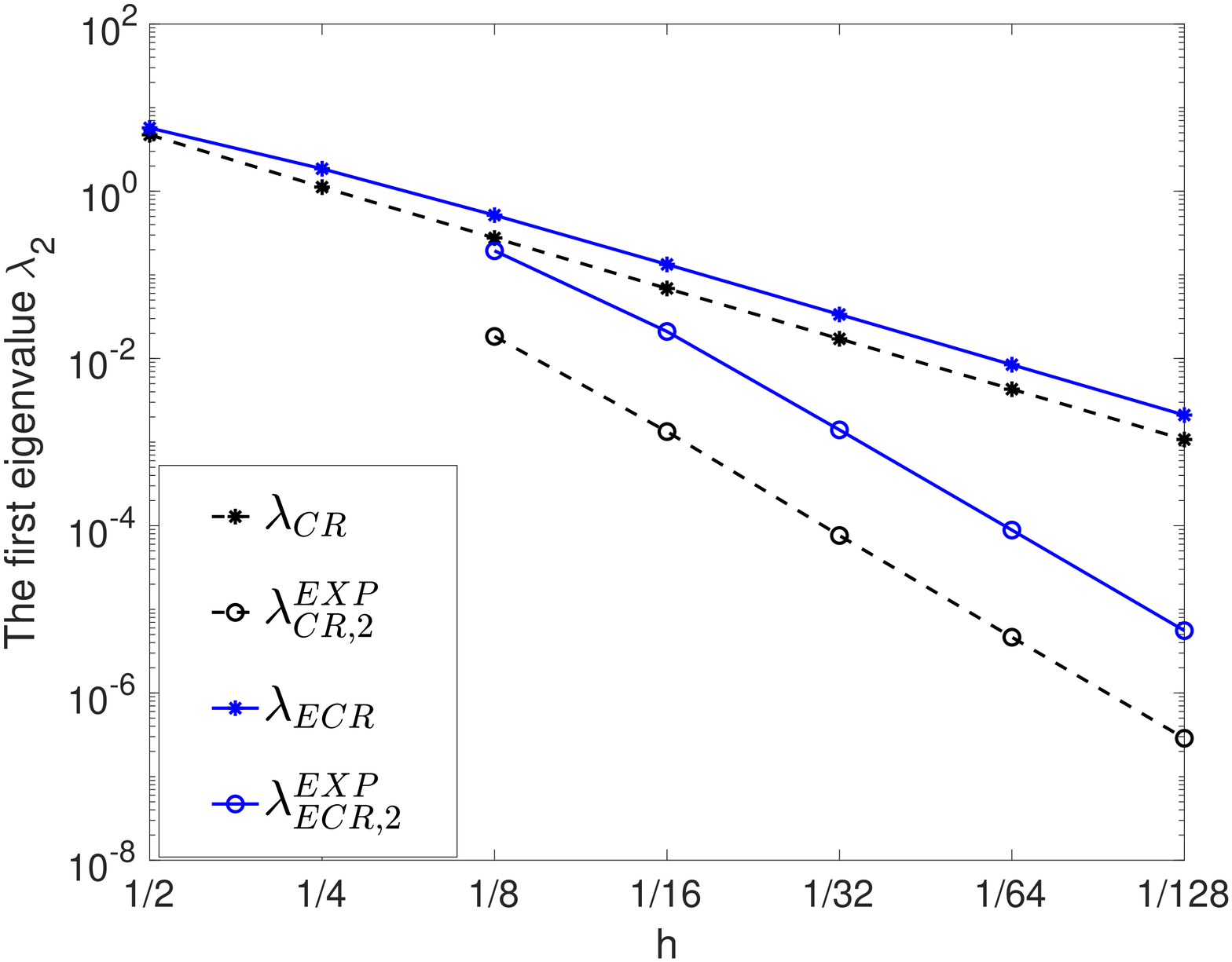}\vspace{.1in}
\includegraphics[width=5cm,height=5cm]{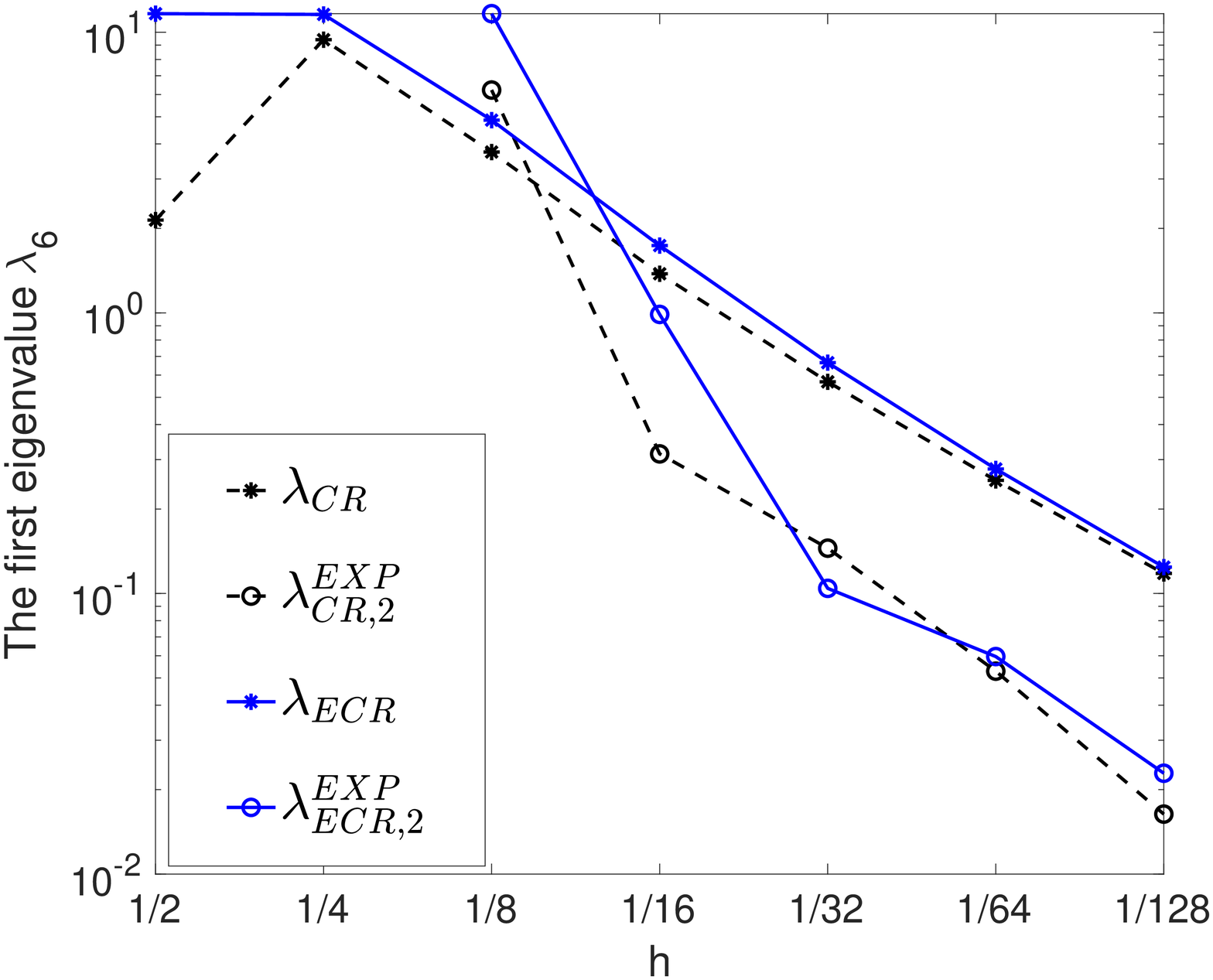}\hspace{.2in}
\includegraphics[width=5cm,height=5cm]{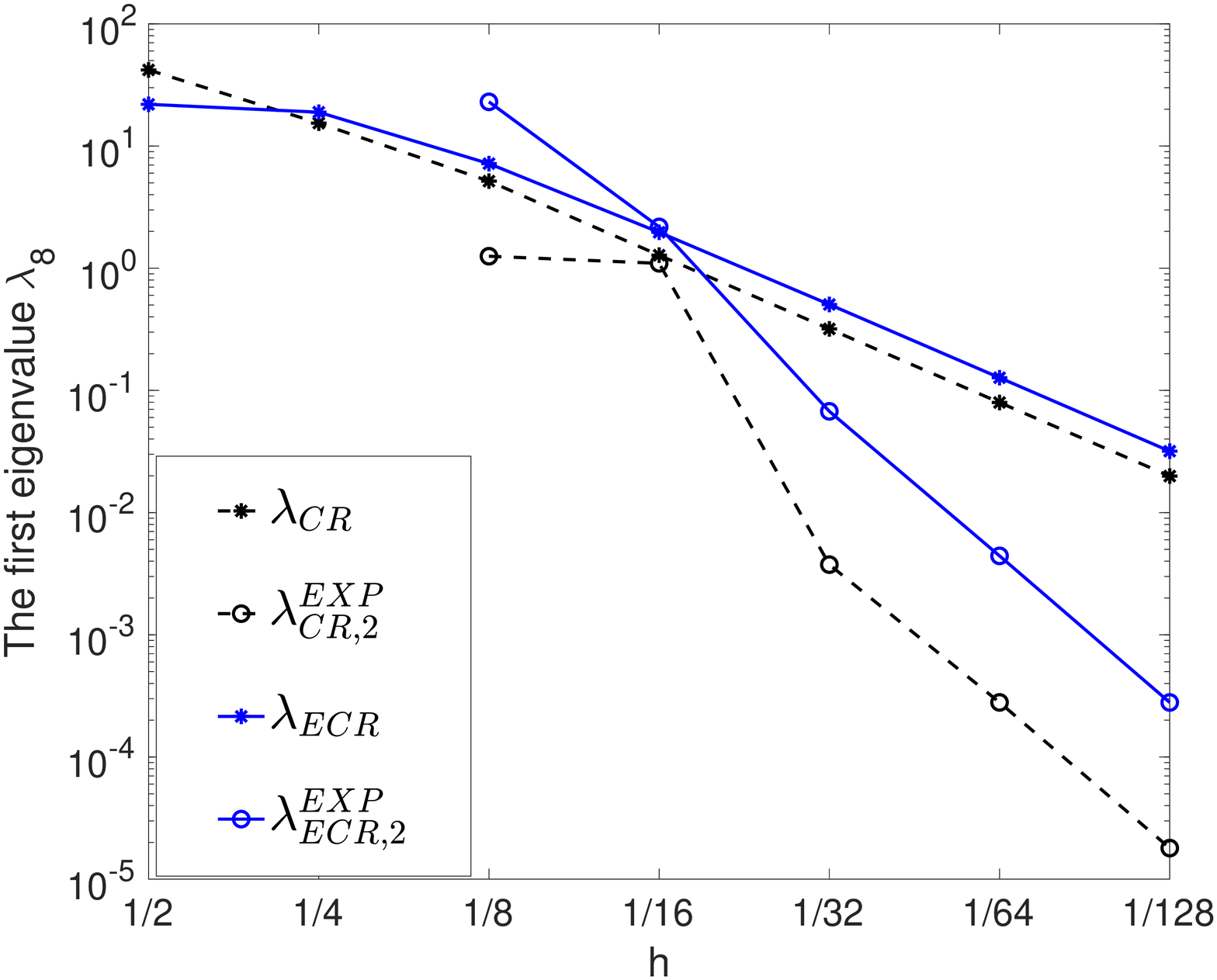}
\caption{\footnotesize{The errors of extrapolation eigenvalues for Example 3.}}
\label{fig:crackerr}
\end{figure}

%\subsection{Example 5}
%In this experiment, we consider the following fourth order elliptic eigenvalue problem
%\begin{equation*}
%\begin{split}
%\Delta^2 u \ &=\ \lambda  u \  \text{\quad in}\ \Om=(0,1)^2 ,\\
%u |_{\partial \Om} &=0 \  \text{,\quad}\ \Delta u |_{\partial \Om}=0.
%\end{split}
%\end{equation*}
%
%\begin{figure}[!ht]
%\setlength{\abovecaptionskip}{0pt}
%\setlength{\belowcaptionskip}{0pt}
%\centering
%\includegraphics[width=11cm,height=9cm]{squareMorley1.eps}
%\caption{\footnotesize{The errors of approximate eigenvalues $\Mlam$ by the Morley element and $\lambda_{\rm M}^{\rm EXP}$ by extrapolation methods for Example 5.}}
%\label{fig:squareMorley}
%\end{figure}
%
%The problem is solved by the Morley element on the same sequence of uniform triangulations in Example 1. It is known that the first eigenvalue of this problem is $\lambda =4\pi^4$, and the convergence rates of approximate eigenvalues by the Morley element are 2. Figure \ref{fig:squareMorley} plots the errors of eigenvalues by the Morley element and the corresponding extrapolation methods. The convergence rate of eigenvalues by the Morley element is improved remarkably from second order to forth order, which meets the result in Theorem \ref{th:morley}.

\bibliographystyle{plain}
\bibliography{bibifile}

\end{document}